\tikzset{join/.code=\tikzset{after node path={%
\ifx\tikzchainprevious\pgfutil@empty\else(\tikzchainprevious)%
edge[every join]#1(\tikzchaincurrent)\fi}}}
\tikzset{>=stealth',every on chain/.append style={join},
         every join/.style={->}}
\tikzstyle{labeled}=[execute at begin node=$\scriptstyle,
\newcommand{\qcqp}{\mbox{QCQP-$\mathbb{C}$}}
\newcommand{\csdpr}{\mbox{CSDP-$\mathbb{R}$}}
\newcommand{\sdpr}{\mbox{SDP-$\mathbb{R}$}}
\newcommand{\sdpc}{\mbox{SDP-$\mathbb{C}$}}
\newcommand{\socpc}{\mbox{SOCP-$\mathbb{C}$}}
\newcommand{\socpr}{\mbox{SOCP-$\mathbb{R}$}}
\newcommand{\csocpr}{\mbox{CSOCP-$\mathbb{R}$}}
\newtheorem{example}{\textit{Example}}[section]
\newtheorem{remark}{\textit{Remark}}[section]
\title{Moment/Sum-of-Squares Hierarchy for Complex Polynomial Optimization}
\author{C\'edric Josz\footnotemark[2]\ \footnotemark[4]\
\and Daniel~K. Molzahn\footnotemark[3]
}
\begin{document}
\maketitle

\renewcommand{\thefootnote}{\fnsymbol{footnote}}

\footnotetext[2]{French National Research Institute in Scientific Computing INRIA, Paris-Rocquencourt, BP 105, F-78153 Le Chesnay, France.}
\footnotetext[3]{Department of Electrical Engineering and Computer Science, University of Michigan, Ann Arbor, MI 48109, USA (\email{molzahn@umich.edu}). Support from Dow Sustainability Fellowship, ARPA-E grant DE-AR0000232 and Los Alamos National Laboratory subcontract 270958.}
\footnotetext[4]{French Transmission System Operator RTE, 9, rue de la Porte de Buc, BP 561, F-78000 Versailles, France (\email{cedric.josz@rte-france.com}). Support from CIFRE ANRT 
contract 2013/0179.}

\renewcommand{\thefootnote}{\arabic{footnote}}

\slugger{mms}{xxxx}{xx}{x}{x--x}

\begin{abstract}
We consider the problem of finding the global optimum of a real-valued complex polynomial on a compact set defined by real-valued complex polynomial inequalities. It reduces to solving a sequence of complex semidefinite programming relaxations that grow tighter and tighter thanks to D'Angelo's and Putinar's Positivstellenstatz discovered in 2008. In other words, the Lasserre hierarchy may be transposed to complex numbers. We propose an algorithm for exploiting sparsity and apply the complex hierarchy to problems with several thousand complex variables. They consist in computing optimal power flows in the European high-voltage transmission network. 
\end{abstract}

\begin{keywords}
Quillen property,
Lasserre hierarchy,
Shor relaxation,
complex moment problem,
sparse semidefinite programming,
optimal power flow.
\end{keywords}

\begin{AMS}\end{AMS}

\pagestyle{myheadings}
\thispagestyle{plain}
\markboth{C\'EDRIC JOSZ AND DANIEL~K. MOLZAHN}{Hierarchy for Complex Polynomial Optimization}
\section{Introduction}
Multivariate polynomial optimization where variables and data are complex numbers is a non-deterministic polynomial-time hard problem that arises in various applications such as electric power systems (Section \ref{sec:Application to Electric Power Systems}), imaging science~\cite{singer-2011,candes-2013,bandeira-2014,fogel-2014}, signal processing~\cite{maricic-2003,aittomaki-2009,chen-2009,luo-2010,li-2012,aubry-2013}, automatic control~\cite{toker-1998}, and quantum mechanics~\cite{hilling-2010}. Complex numbers are typically used to model oscillatory phenomena which are omnipresent in physical systems. Although complex polynomial optimization problems can readily be converted into real polynomial optimization problems, efforts have been made to find \textit{ad hoc} solutions~\cite{sorber-2012,jiang-2014,jiang-2015}. We observe that relaxing non-convex constraints and converting from complex to real numbers are two non-commutative operations. This leads us to transpose to complex numbers Lasserre's moment/sum-of-squares hierarchy~\cite{lasserre-2001} for real polynomial optimization.

In 1968, Quillen~\cite{quillen-1968} showed that a real-valued bihomogenous complex polynomial that is positive away from the origin can be decomposed as a sum of squared moduli of holomorphic polynomials when it is multiplied by $(|z_1|^2\! +\! \hdots\! +\! |z_{n}|^2)^r$ for some $r\in \mathbb{N}$. The result was rediscovered by Catlin and D'Angelo~\cite{catlin-1996} and ignited a search for complex analogues of Hilbert's seventeenth problem~\cite{angelo-2002,angelo-2010} and the ensuing Positivstellens\"atze~\cite{putinar-2006,putinar-2012,putinar-scheiderer-2012,putinar-2013}. Notably, D'Angelo and Putinar~\cite{angelo-2008} proved in 2008 that a positive complex polynomial on a sphere intersected by a finite number of polynomial inequality constraints can be decomposed as a weighted sum of the constraints where the weights are sums of squared moduli of holomorphic polynomials.
Similar to Lasserre~\cite{lasserre-2001} and Parrilo~\cite{parrilo-2003}, we use D'Angelo's and Putinar's Positivstellensatz to construct a complex moment/sum-of-squares hierarchy of semidefinite programs to solve complex polynomial optimization problems with compact feasible sets. To satisfy the assumption in the Positivstellensatz, we propose to add a slack variable $z_{n+1} \in \mathbb{C}$ and a redundant constraint $|z_1|^2\! +\! \hdots\! +\! |z_{n+1}|^2\! =\! R^2$ to the description of the feasible set when it is in a ball of radius $R$. The complex hierarchy is more tractable than the real hierarchy yet produces potentially weaker bounds. Computational advantages are shown using the optimal power flow problem in electrical engineering. In addition to global convergence of the bounds, the complex hierarchy is endowed with sufficient conditions for extracting feasible points that are globally optimal.

The theoretical contributions of this paper regarding the complex hierarchy are: 
\begin{enumerate}
\item its construction using real-valued Radon measures (Section~\ref{sec:Complex Moment/Sum-of-Squares Hierarchy}) leading to a new notion of complex moment matrix and localization matrix (Remark~\ref{rem:Ly}) different from existing literature~\cite{curto-2005}; the Lasserre hierarchy~\cite{lasserre-2001} can thus be viewed as a special case of the proposed complex hierarchy (Figure~\ref{fig:hankel});
\item a proof of global convergence (Proposition~\ref{prop:dualconverge}, Corollary~\ref{cor:convergence}); a sufficient condition for strong duality (Proposition~\ref{prop:duality}); Karush-Kuhn-Tucker conditions involving complex sums-of-squares (Corollary~\ref{cor:KKT}); a multi-ordered hierarchy to exploit sparsity while preserving global convergence (Section~\ref{subsec:Multi-Ordered Hierarchy});
\item a solution to a newly defined truncated complex moment problem (Theorem~\ref{th:trunc}) different from existing literature~\cite[Theorem 5.1]{curto-2005} which implies Curto and Fialkow's solution of the real truncated moment problem (Corollary~\ref{cor:trunc}); as a result, sufficient conditions for extracting global solutions from the complex hierarchy (Proposition~\ref{prop:rank});
\item an invariant complex hierarchy whose convergence can be deduced from an invariant version of D'Angelo's and Putinar's Positivstellensatz (Proposition~\ref{prop:invariance}); in particular, an action of the torus in the complex plane (Proposition~\ref{prop:torus}) and a subgroup of it (Proposition~\ref{prop:subtorus}) are considered.
\end{enumerate}

The paper is organized as follows. Section \ref{sec:Motivation} uses Shor and second-order conic relaxations to motivate the complex moment/sum-of-squares hierarchy in Section \ref{sec:Complex Moment/Sum-of-Squares Hierarchy}. Using a sparsity-exploiting algorithm, numerical experiments on the optimal power flow problem are presented in Section \ref{sec:Application to Electric Power Systems}. Section \ref{sec:Conclusion} concludes our work.

\section{Motivation}
\label{sec:Motivation}

Let $\mathbb{N}$, $\mathbb{N}^*$, $\mathbb{R}$, $\mathbb{R}_+$ and $\mathbb{C}$ denote the set of natural, positive natural, real, non-negative real, and complex numbers respectively. Also, let ``$\textbf{i}$'' denote the imaginary unit and $\mathbb{H}_n$ denote the set of Hermitian matrices of order $n\in \mathbb{N}^*$. Consider the subclass of complex polynomial optimization
\begin{equation}
\label{eq:qcqp}
\text{QCQP-}\mathbb{C}~\text{:} ~~~
\inf_{z \in \mathbb{C}^n}~ z^H H_0 z ~~~ \text{s.t.} ~~~
z^H H_i z \leqslant h_i,~~~ i=1, \hdots, m,
\end{equation}
where $m \in \mathbb{N}^*$, $H_0,\hdots,H_m \in \mathbb{H}_n$, $h_0,\hdots,h_m \in \mathbb{R}$, $\left(\cdot\right)^H$ denotes the conjugate transpose. The Shor~\cite{shor-1987b} and second-order conic relaxations of QCQP-$\mathbb{C}$ share the following property: it is better to relax non-convex constraints before converting from complex to real numbers rather than to do the two operations in the opposite order.

\subsection{Shor Relaxation}
\label{subsec:Shor Relaxation}
For $H \in \mathbb{H}_n$ and $z \in \mathbb{C}^n$, the relationship $z^H H z = \text{Tr}(H z z^H)$ holds where $\text{Tr}\left(\cdot\right)$ denotes the trace
\footnote{For all matrices $A,B \in \mathbb{C}^{n\times n}$, $\text{Tr}(AB) = \sum_{1 \leqslant i,j \leqslant n} A_{ij} B_{ji}$.} 
of a complex square matrix. Let $\succcurlyeq 0$ indicate positive semidefiniteness. Relaxing the rank of $Z = zz^H$ in $\eqref{eq:qcqp}$ yields 
\begin{subequations}
\begin{gather}
\text{SDP-}\mathbb{C}~\text{:} ~~~
\inf_{Z \in \mathbb{H}_n}~ \text{Tr}(H_0 Z) ~~~~~~~~~~~~~~~~~~~~~~ \label{eq:sdpC1} \\
~~~~~~~~~~~~~~~~~~~~~~\text{s.t.} ~~~
\text{Tr}(H_i Z) \leqslant h_i,~~~~ i=1, \hdots, m, \label{eq:sdpC1bis}\\
Z \succcurlyeq 0, \label{eq:sdpC2} ~
\end{gather}
\end{subequations}
Let $\text{Re}Z$ and $\text{Im}Z$ denote the real and imaginary parts of the matrix $Z\in \mathbb{C}^{n\times n}$ respectively. Consider the ring homomorphism $\Lambda : (\mathbb{C}^{n\times n},+,\times) \longrightarrow (\mathbb{R}^{2n\times 2n},+,\times)$
\begin{equation}
\label{eq:conversion}
\Lambda(Z) :=
\left( \begin{array}{cr}
\text{Re}Z & - \text{Im} Z \\
\text{Im}Z & \text{Re}Z
\end{array} 
\right).
\end{equation}
To convert \sdpc{} into real numbers, real and imaginary parts of the complex matrix variable are identified using two properties: (1) a complex matrix $Z$ is positive semidefinite if and only if the real matrix $\Lambda(Z)$ is positive semidefinite, and (2) if $Z_1,Z_2 \in \mathbb{H}_n$, then $\text{Tr}\left[\Lambda(Z_1)\Lambda(Z_2)\right] = \text{Tr}\left[\Lambda(Z_1Z_2)\right]  = 2\text{Tr}(Z_1 Z_2)$. 
This yields
\begin{subequations}
\begin{gather}
\text{CSDP-}\mathbb{R}~\text{:} ~~~ \inf_{X \in \mathbb{S}_{2n}} ~ \text{Tr}( \Lambda(H_0) X) ~~~~~~~~~~~~~~~~~~~~~~~~~ \label{eq:csdpR1} \\
~~~~~~~~~~~~~~~~~~~~~~\text{s.t.} ~~~
\text{Tr}( \Lambda(H_i) X) \leqslant h_i,~~~~ i=1, \hdots, m, \label{eq:csdpR2}
\\
X  \succcurlyeq 0, \label{eq:csdpR3} ~~~~~~
\\ ~~~~~~~~~~~~~~~~~~~~~~~~~~~~~~~
X = 
\left(
\begin{array}{cl}
A & B^T \\
B & C
\end{array}
\right) ~~~\&~~~ 
\begin{array}{lcr}
A & = & C, \\
B^T & = & -B,
\end{array}
\label{eq:csdpR4}
\end{gather}
\end{subequations}
where $\mathbb{S}_{2n}$ denotes the set of real symmetric matrices of order~$2n$ and $\left(\cdot\right)^T$ indicates the transpose. Note that the set of matrices satisfying \eqref{eq:csdpR4} is isomorphic to $\mathbb{C}^{n \times n}$. A global solution to \qcqp{} can be retrieved from \csdpr{} if and only if $\text{rank}(X)\in \{0,2\}$ at optimality (proof in Appendix \ref{app:Rank-2 Condition}).
In order to convert \qcqp{} into real numbers, real and imaginary parts of the complex vector variable are identified. This is done by considering a new variable $x = \left( ~ (\text{Re}z)^T ~ (\text{Im}z)^T ~ \right)^T $ and observing that if $H \in \mathbb{H}_n$, then $z^H H z = x^T \Lambda(H) x = \text{Tr}(\Lambda(H)xx^T)$. This gives rise to a problem which we will call QCQP-$\mathbb{R}$. Relaxing the rank of $X=xx^T$ yields
\begin{subequations}
\begin{gather}
\text{SDP-}\mathbb{R}~\text{:} ~~~ \inf_{X \in \mathbb{S}_{2n}} ~ \text{Tr}( \Lambda(H_0) X) ~~~~~~~~~~~~~~~~~~~  \label{eq:sdpR1} \\
~~~~~~~~~~~~~~~~~~~~~~~~~~\text{s.t.} ~~~
\text{Tr}( \Lambda(H_i) X) \leqslant h_i, ~~~~ i=1, \hdots, m, \label{eq:sdpR2}
\\
X  \succcurlyeq 0. \label{eq:sdpR3} ~~~ 
\end{gather}
\end{subequations}
A global solution to \qcqp{} can be retrieved from \sdpr{} if and only if $\text{rank}(X)\in \{0,1\}$ or $\text{rank}(X)=2$ and \eqref{eq:csdpR4} holds at optimality.
We have $\text{val}(\text{SDP-}\mathbb{C}) = \text{val}(\text{CSDP-}\mathbb{R}) = \text{val}(\text{SDP-}\mathbb{R})$
where ``val'' is the optimal value of a problem (proof in Appendix \ref{app:Invariance of Shor Relaxation Bound}).
The number of scalar variables of \csdpr{} is half that of \sdpr{} due to constraint \eqref{eq:csdpR4}. This constraint also halves the possible ranks of the matrix variable, which must be an even integer in \csdpr{} whereas it can be any integer between 0 and $2n$ in \sdpr{}. The number of variables in \sdpr{} can be reduced by a small fraction ($\frac{2}{2n+1}$ to be exact) by setting a diagonal element of $X$ to 0. This does not affect the optimal value (proof in Appendix \ref{app:Invariance of SDP-R Relaxation Bound}). See Figure~\ref{fig:commutation} for a summary.
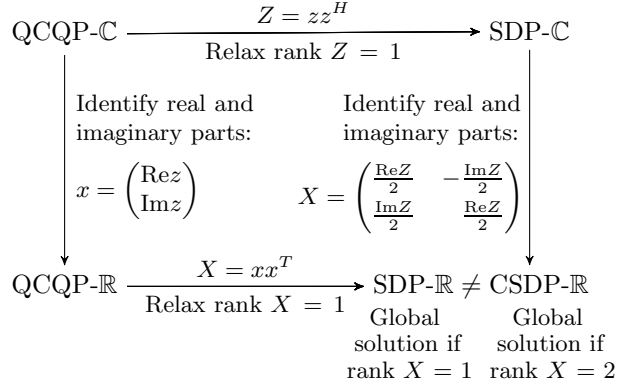
\begin{figure}[ht]
  \centering

\begin{tikzpicture}[scale=1, transform shape]
 \matrix (m) [matrix of math nodes, row sep=8em, column sep=9em]
	{ \text{QCQP-}\mathbb{C}  & \text{SDP-}\mathbb{C}  \\
     \text{QCQP-}\mathbb{R} & \text{SDP-}\mathbb{R} \neq \text{CSDP-}\mathbb{R} ~~~~~~~~~~~\\  };
{ [start chain] \chainin (m-1-1);
    \chainin (m-1-2) [join={node[above,labeled] {\mbox{\small $Z = zz^H$}}}];
        }
{ [start chain] \chainin (m-1-1);
    \chainin (m-1-2) [join={node[below,labeled] {\mbox{\small \text{Relax rank }$Z \,=\, 1$}}}];
        }
{ [start chain] \chainin (m-1-2);
    \chainin (m-2-2);
        }
{ [start chain] \chainin (m-1-1);
    \chainin (m-2-1);
        }
{ [start chain] \chainin (m-2-1);
    \chainin (m-2-2) [join={node[above,labeled] {\mbox{\small $X = xx^T$}}}];
        }
{ [start chain] \chainin (m-2-1);
    \chainin (m-2-2) [join={node[below,labeled] {\mbox{\small \text{Relax rank }$X \,=\, 1$}}}];
        }

\draw (-3.75,1) node[below right] {\small Identify real and};
\draw (-3.75,0.6) node[below right] {\small imaginary parts:};
\draw (-3.75,0.1) node[below right] {\mbox{\small $x = \begin{pmatrix} \text{Re} z \\ \text{Im} z\end{pmatrix}$}};

\draw (-0.2,1) node[below right] {\small Identify real and};
\draw (-0.2,0.6) node[below right] {\small imaginary parts:};
\draw (-0.8,0.1) node[below right] {\mbox{\small $X = \begin{pmatrix} \frac{\text{Re} Z}{2} & -\frac{\text{Im} Z}{2} \\[0.3em] \frac{\text{Im} Z}{2} & \hphantom{-}\frac{\text{Re} Z}{2}\end{pmatrix}$}};

\draw (0.15,-1.85) node[below right] {\small Global};
\draw (-0.05,-2.2) node[below right] {\small solution if};
\draw (-0.15,-2.55) node[below right] {\small rank $X$ = 1};

\draw (2.05,-1.85) node[below right] {\small Global};
\draw (1.85,-2.2) node[below right] {\small solution if};
\draw (1.75,-2.55) node[below right] {\small rank $X$ = 2};

\end{tikzpicture}
  \caption{Non-Commutativity of Complex-to-Real Conversion and Relaxation}
  \label{fig:commutation}
\end{figure} 
\subsection{Second-Order Conic Relaxation}
\label{subsec:Second-Order Conic Relaxation}
In \sdpc{} of Section~\ref{subsec:Shor Relaxation}, assume that the semidefinite constraint \eqref{eq:sdpC2} is relaxed to the second-order cones
\begin{equation}
\left(
\begin{array}{cc}
Z_{ii} & Z_{ij} \\
Z_{ij}^H & Z_{jj}
\end{array}
\right)
 \succcurlyeq 0 ~~,~~ 1 \leqslant i \neq j \leqslant n. \label{eq:2x2}
\end{equation}
Equation \eqref{eq:2x2} is equivalent to constraining the determinant $Z_{ii} Z_{jj} - Z_{ij} Z_{ij}^H$ and diagonal elements $Z_{ii}$ to be non-negative. This yields $\text{SOCP-}\mathbb{C}~\text{:}  \inf_{Z \in \mathbb{H}_n} \text{Tr}(H_0 Z) ~ \text{s.t.}$ \eqref{eq:sdpC1bis}, $|Z_{ij}|^2 \leqslant Z_{ii} Z_{jj}$ for $1 \leqslant i \neq j \leqslant n$, and $Z_{ii} \geqslant 0$ for $i=1, \hdots, n$ where $|\cdot|$ denotes the complex modulus.
Identifying real and imaginary parts of the matrix variable $Z$ leads to $\text{CSOCP-}\mathbb{R}~\text{:}  \inf_{X \in \mathbb{S}_{2n}} \text{Tr}( \Lambda(H_0) X) ~ \text{s.t.}$ \eqref{eq:csdpR2}, \eqref{eq:csdpR4}, $X_{ij}^2 + X_{n+i,j}^2 \leqslant X_{ii} X_{jj}$ for $1 \leqslant i \neq j \leqslant n$, and $X_{ii} \geqslant 0$ for $i=1, \hdots, n$.
In \sdpr{} of Section~\ref{subsec:Shor Relaxation}, assume that the semidefinite constraint \eqref{eq:sdpR3} is relaxed to the second-order cones
\begin{equation}
\left(
\begin{array}{cc}
X_{ii} & X_{ij} \\
X_{ij} & X_{jj}
\end{array}
\right)
 \succcurlyeq 0 ~~,~~ 1 \leqslant i \neq j \leqslant 2n.
\end{equation}
This leads to $\text{SOCP-}\mathbb{R}~\text{:} \inf_{X \in \mathbb{S}_{2n}} \text{Tr}( \Lambda(H_0) X) ~ \text{s.t.}$ \eqref{eq:sdpR2}, $X_{ij}^2 \leqslant X_{ii} X_{jj}$ for $1 \leqslant i \neq j \leqslant 2n$, and $X_{ii} \geqslant 0$ for $i=1, \hdots, 2n$.
We have
$
\text{val}(\text{SOCP-}\mathbb{C}) = \text{val}(\text{CSOCP-}\mathbb{R}) \geqslant \text{val}(\text{SOCP-}\mathbb{R})
$ (proof in Appendix \ref{app:Discrepancy Between Second-Order Conic Relaxation Bounds}).
The number of variables of \csocpr{} is half that of \socpr{} due to constraint \eqref{eq:csdpR4}. The number of second-order conic constraints in \csocpr{}, equal to $\frac{n(n-1)}{2}$, is roughly a fourth of that in \socpr{}, equal to $\frac{2n(2n-1)}{2}$.

\subsection{Exploiting Sparsity}

\label{subsec:Exploiting sparsity}

The properties of chordal graphs enable sparsity exploitation for the Shor relaxation~\cite{vandenberghe2014}. Given an undirected graph $(\mathcal{V},\mathcal{E})$ where $\mathcal{V} \subset \{1,\hdots,n\}$ and $\mathcal{E}\subset \mathcal{V} \times \mathcal{V}$, define for all $Z \in \mathbb{H}_n$
\begin{equation}
\Psi_{(\mathcal{V},\mathcal{E})}(Z)_{ij} := \left\{ \begin{array}{cl} Z_{ij} & \text{if} ~~ (i,j) \in \mathcal{E} ~~ \text{or} ~~ i = j \in \mathcal{V}, \\ 0 & \text{else}. \end{array} \right.
\end{equation}

We associate an undirected graph $\mathcal{G}$ to \qcqp{} whose nodes are $\{1,\hdots,n\}$ and that satisfies 
$H_i = \Psi_\mathcal{G}(H_i) $ for $i = 0, \hdots, m$. Let $ \mathbb{H}_n^+$ denote the set of positive semidefinite Hermitian matrices of size $n$ and let ``Ker'' denote the kernel of a linear application. Given the definition of $\mathcal{G}$, constraint~\eqref{eq:sdpC2} of \sdpc{} can be relaxed to 
$
Z \in \mathbb{H}_n^+ + \text{Ker} ~ \Psi_{\tilde{\mathcal{G}}}
$
without changing its optimal value for any graph $\tilde{\mathcal{G}}$ whose nodes are $\{1,\hdots,n\}$ and where $\mathcal{G} \subset \tilde{\mathcal{G}}$. 
Consider a chordal extension $\mathcal{G}\subset \mathcal{G}^\text{ch}$, that is to say that all cycles of length four or more have a chord (edge between two non-consecutive nodes of the cycle). Let $\mathcal{C}_1,\hdots,\mathcal{C}_p \subset \mathcal{G}^\text{ch}$ denote the maximal cliques of $\mathcal{G}^\text{ch}$. (A clique is a subgraph where all nodes are linked to one another. The set of maximally sized cliques of a chordal graph can be computed in linear time~\cite{tarjan}). A chordal extension has a useful property for exploiting sparsity \cite{grone}: for all $Z \in \mathbb{H}_n$, we have that $Z \in \mathbb{H}_n^+ + \text{Ker}~ \Psi_{\mathcal{G}^\text{ch}}$ if and only if $\Psi_{\mathcal{C}_i}(Z) \succcurlyeq 0$ for $i=1,\hdots, p$. Note that $\Psi_{\mathcal{C}_i}(Z) \succcurlyeq 0$ if and only if
$\Lambda \circ \Psi_{\mathcal{C}_i}(Z) \succcurlyeq 0$, where ``$\circ$'' is the composition of functions. Given a graph $(\mathcal{V},\mathcal{E})$, define for $X \in \mathbb{S}_{2n}$
\begin{equation}
\tilde{\Psi}_{(\mathcal{V},\mathcal{E})}(X) :=
\left(
\begin{array}{cl}
\Psi_{(\mathcal{V},\mathcal{E})}(A) & \Psi_{(\mathcal{V},\mathcal{E})}(B^T)  \\
\Psi_{(\mathcal{V},\mathcal{E})}(B)  & \Psi_{(\mathcal{V},\mathcal{E})}(C) 
\end{array}
\right),
\end{equation}
using the block decomposition in the left hand part of \eqref{eq:csdpR4}. Notice that $\Lambda \circ \Psi_{(\mathcal{V},\mathcal{E})}= \tilde{\Psi}_{(\mathcal{V},\mathcal{E})} \circ \Lambda$. As a result, \eqref{eq:csdpR3} can be replaced by
$ \tilde{\Psi}_{\mathcal{C}_i} (X) \succcurlyeq 0$ for $i = 1,\hdots, p$ without changing the optimal value of \csdpr{}, with an analogous replacement for constraint \eqref{eq:sdpR3} in \sdpr{}. If in \sdpr{} we exploit the sparsity of matrices $\Lambda(H_i)$ instead of that of $H_i$, the resulting graph has twice as many nodes. Computing a chordal extension and maximal cliques is hence more costly. Sparsity in the second-order conic relaxations is exploited using the fact that applying constraints only for $(i,j)$ that are edges of $\mathcal{G}$ does not change the optimal values of \csocpr{} and \socpr{}.

\section{Complex Moment/Sum-of-Squares Hierarchy} 
\label{sec:Complex Moment/Sum-of-Squares Hierarchy}
We transpose~\cite{lasserre-2001} from real to complex numbers.
Let $z^\alpha$ denote the monomial $z_1^{\alpha_1} \cdots z_n^{\alpha_n}$ where $z \in {\mathbb C}^n$ and $\alpha \in {\mathbb N}^n$ for some integer $n \in \mathbb N^*$. Let $|\alpha| := \alpha_1+\hdots+\alpha_n$ and define $\overline{w}$ as the conjugate of $w \in \mathbb{C}$. Define $\bar{z} := (\bar{z}_1,\hdots,\bar{z}_n)^T$ where $z \in {\mathbb C}^n$.
Consider the sets where $d\in \mathbb{N}$
\begin{equation}
\begin{array}{rl}
\mathbb{C}[z] := & \{ ~ p : \mathbb{C}^n \rightarrow \mathbb{C} ~|~ p(z) = \sum_{|\alpha|\leqslant l} p_{\alpha} z^\alpha, ~ l\in \mathbb{N}, ~ p_{\alpha} \in \mathbb{C}  ~ \}, \\
\mathbb{C}[\bar{z},z]  := & \{ ~ f : \mathbb{C}^n \rightarrow \mathbb{C} ~|~ f(z) = \sum_{|\alpha|,|\beta|\leqslant l} f_{\alpha,\beta} \bar{z}^\alpha z^\beta,~ l\in \mathbb{N}, ~ f_{\alpha,\beta} \in \mathbb{C} ~ \}, \\
\mathbb{R}[\bar{z},z]  := & \{ ~ f \in \mathbb{C}[\bar{z},z] ~|~ \overline{f(z)} = f(z), ~ \forall z \in \mathbb{C}^n ~ \}, \\[0.25em]
\Sigma[z] := & \{ ~ \sigma : \mathbb{C}^n \rightarrow \mathbb{C} ~|~ \sigma = \sum_{j=1}^r |p_j|^2, ~ r\in \mathbb{N}^*, ~ p_j \in \mathbb{C}[z] ~ \},
\end{array}
\end{equation}
\begin{equation}
\label{eq:complexsetsd}
\begin{array}{rl}
\mathbb{C}_d[z] := & \{ ~ p : \mathbb{C}^n \rightarrow \mathbb{C} ~|~ p(z) = \sum_{|\alpha|\leqslant d} p_{\alpha} z^\alpha, ~ p_{\alpha} \in \mathbb{C}  ~ \}, \\
\mathbb{C}_d[\bar{z},z] := & \{ ~ f : \mathbb{C}^n \rightarrow \mathbb{C} ~|~ f(z) = \sum_{|\alpha|,|\beta|\leqslant d} f_{\alpha,\beta} \bar{z}^\alpha z^\beta, ~ f_{\alpha,\beta} \in \mathbb{C}  ~ \}, \\
\mathbb{R}_d[\bar{z},z]  := & \{ ~ f \in \mathbb{C}_d[\bar{z},z] ~|~ \overline{f(z)} = f(z), ~ \forall z \in \mathbb{C}^n ~ \}, \\[0.25em]
\Sigma_d[z] := & \{ ~ \sigma : \mathbb{C}^n \rightarrow \mathbb{C} ~|~ \sigma = \sum_{j=1}^r |p_j|^2, ~ r\in \mathbb{N}^*, ~ p_j \in \mathbb{C}_d[z] ~ \}.
\end{array}
\end{equation}
Note that the coefficients of a function $f \in \mathbb{R}[\bar{z},z]$ satisfy $\overline{f_{\alpha,\beta}} = f_{\beta,\alpha}$ for all $|\alpha|,|\beta|\leqslant l$ for some $l \in \mathbb{N}$. The set of complex polynomials $\mathbb{C}[\bar{z},z]$ is a $\mathbb{C}$-algebra (i.e. commutative ring and vector space over $\mathbb{C}$) and the set of holomorphic polynomials $\mathbb{C}[z]$ is a subalgebra of it (i.e. subspace closed under sum and product). The set of real-valued complex polynomials $\mathbb{R}[\bar{z},z]$ is an $\mathbb{R}$-algebra. The set of sums of squared moduli of holomorphic polynomials $\Sigma[z]$ and the set $\Sigma_d[z] \subset \mathbb{R}_d[z]$ are pointed cones (i.e. closed under multiplication by elements of $\mathbb{R}_+$) that are convex (i.e. $tu+ (1-t)v$ with $0\leqslant t \leqslant 1$ belongs to them if $u$ and $v$ do). 
Let $C(K,\mathbb{C})$ denote the Banach (i.e. complete) $\mathbb{C}$-algebra of continuous functions from a compact set $K \subset \mathbb{C}^n$ to $\mathbb{C}$ equipped with the norm $\|\varphi\|_{\infty} := \sup_{z\in K} |\varphi(z)|$. Consider $R_K : \mathbb{C}[\bar{z},z] \longrightarrow C(K,\mathbb{C})$ defined by  $f \longmapsto f_{|K}$
where $f_{|K}$ denotes the restriction of $f$ to $K$. $R_K(\mathbb{C}[\bar{z},z])$ is a unital subalgebra of $C(K,\mathbb{C})$ (i.e. contains multiplicative unit) that separates points of $K$ (i.e. $u\neq v \in K \Longrightarrow \exists \varphi \in R_K(\mathbb{C}[\bar{z},z]): \varphi(u)\neq \varphi(v)$) and that is closed under complex conjugation. It is hence a dense subalgebra due to the Complex Stone-Weiestrass Theorem.
Likewise, $C(K,\mathbb{R}) := \{ \varphi \in C(K,\mathbb{C}) ~|~ \overline{\varphi(z)} = \varphi(z), ~ \forall z \in \mathbb{C}^n  \}$ is a Banach $\mathbb{R}$-algebra of which $R_K(\mathbb{R}[\bar{z},z])$ is a dense subalgebra.
In other words, a continuous real-valued function of multiple complex variables can be approximated as close as desired by real-valued complex polynomials when restricted to a compact set. They are hence a powerful modeling tool in optimization. Speaking of which, let $m \in \mathbb{N}^*$ and $k,k_1,\hdots,k_m \in \mathbb{N}$. Consider $(f,g_1,\hdots,g_m) \in \mathbb{R}_k[\bar{z},z] \times \mathbb{R}_{k_1}[\bar{z},z] \times \hdots \times \mathbb{R}_{k_m}[\bar{z},z]$ where there exists $|\alpha|=k$ and $|\beta| \leqslant k$ such that $f_{\alpha,\beta} \neq 0$. In addition, for $i=1,\hdots,m$, there exists $|\alpha|=k_i$ and $|\beta| \leqslant k_i$ such that $g_{i,\alpha,\beta} \neq 0$. Consider the problem
\begin{equation}
\boxed{
\label{eq:complexPOP}
\begin{array}{rcllll}
f^\text{opt} & := & \inf_{z \in {\mathbb C}^n} & f(z) & \mathrm{s.t.} & g_i(z) \geqslant 0,~~i=1,...,m,
\end{array}
}
\end{equation}
where $f^\text{opt} := + \infty$ if the feasible set is empty.
The feasible set $K:= \{ z \in \mathbb{C}^n ~|~ g_i(z) \geqslant 0,~i=1,...,m \}$ is assumed to be compact.
Let $K^\text{opt}$ denote the set of optimal solutions to \eqref{eq:complexPOP} and $\mathcal{M}(K)$ denote the Banach space over $\mathbb{R}$ of Radon measures on $K$. Since $K$ is compact, $\mathcal{M}(K)$ may be identified with the set of linear continuous applications from $C(K,\mathbb{R})$ to $\mathbb{R}$ equipped with the operator norm (Riesz Representation Theorem). For $\varphi \in C(K,\mathbb{C})$, define $\int_K \varphi d\mu := \int_K \text{Re}(\varphi)d\mu + \textbf{i} \int_K \text{Im}(\varphi)d\mu$~\cite[1.31 Definition]{rudin-1987}\footnote{We wish to thank Bruno Nazaret for bringing this reference to our attention.}. Consider the convex pointed cone 
$
\mathcal{P}(K) := \{ ~ \varphi \in C(K,\mathbb{R}) ~|~ \varphi(z) \geqslant 0,~\forall z\in K ~ \}$.
A Radon measure $\mu$ is positive (denoted $\mu \geqslant 0$) if $\varphi \in \mathcal{P}(K)$ implies that $\int_K \varphi d\mu \geqslant 0$. Let $\mathcal{M}_+(K)$ denote the set of positive Radon measures. 
We have
\begin{equation}
\label{eq:moment}
\begin{array}{rcllll}
f^\text{opt} & = & \inf_{\mu \in \mathcal{M}(K)}  & \int_K f d\mu & \mathrm{s.t.} & \int_K d\mu = 1 ~~\&~~ \mu \geqslant 0.
\end{array}
\end{equation}
Indeed, if $z \in K$, then the Dirac\footnote{The Dirac measure $\delta_z$ with $z\in K$ may be identified with the continuous linear application from $C(K,\mathbb{R})$ to $\mathbb{R}$ defined by $\varphi \longmapsto \varphi(z)$. This is one way to interpret the fact that $\int_K f d\delta_z = f(z)$.} 
measure $\delta_z$ is a feasible point of \eqref{eq:moment} for which the objective value is equal to $f(z)$. Hence the optimal value of \eqref{eq:moment} is less than or equal to $f^\text{opt}$. Conversly, if $\mu$ is a feasible point of \eqref{eq:moment}, then $\int_K (f - f^\text{opt})d\mu \geqslant 0$ and hence $\int_K f d\mu \geqslant \int_K f^\text{opt} d\mu = f^\text{opt} \int_K d\mu = f^\text{opt}$. 
\begin{proposition}
\label{prop:measure}
\normalfont
\textit{The set of optimal solutions to \eqref{eq:moment} is}
\begin{equation}
\label{eq:solutions}
\{ ~ \mu \in \mathcal{M}_+(K) ~|~ \mu(K^\text{opt}) = 1 ~~ \& ~~ \mu(K \setminus K^\text{opt}) = 0 ~ \}.
\end{equation}
\textit{As a consequence, if }$K^\text{opt}$\textit{ is a finite set of $S\in \mathbb{N}^*$ points $z(1),\hdots,z(S) \in \mathbb{C}^n$, then the optimal solutions to \eqref{eq:moment} are} $\{ ~ \sum_{j=1}^S \lambda_j \delta_{z(j)} ~ | ~ \sum_{j=1}^S \lambda_j = 1 ~~  \& ~~ \lambda_1,\hdots,\lambda_S \in \mathbb{R}_+ ~ \}$.
\end{proposition}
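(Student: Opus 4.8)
The plan is to prove the claimed description of the optimal set of \eqref{eq:moment} by a direct double inclusion, using only the already-established equality between the optimal value of \eqref{eq:moment} and $f^\text{opt}$ together with elementary properties of positive Radon measures on the compact metric space $K\subset\mathbb{C}^n\cong\mathbb{R}^{2n}$ supplied by the Riesz representation theorem cited above. Set $g:=f-f^\text{opt}\in\mathbb{R}[\bar z,z]$; it is continuous and satisfies $g\geqslant 0$ on $K$, and $K^\text{opt}=\{z\in K\mid g(z)=0\}=g^{-1}(\{0\})$ is closed in $K$, hence Borel, hence $\mu$-measurable for every $\mu\in\mathcal{M}_+(K)$. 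One may assume $K\neq\emptyset$: otherwise $f^\text{opt}=+\infty$, problem \eqref{eq:moment} is infeasible, and the set in \eqref{eq:solutions} is empty as well (as $K^\text{opt}=\emptyset$), so both sides agree; when $K\neq\emptyset$, compactness of $K$ and continuity of $f$ make $f^\text{opt}$ finite and $K^\text{opt}$ nonempty.

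For ``$\supseteq$'' I would take $\mu\in\mathcal{M}_+(K)$ with $\int_K d\mu=1$ and $\mu(K\setminus K^\text{opt})=0$, and split $\int_K f\,d\mu=\int_{K^\text{opt}}f\,d\mu+\int_{K\setminus K^\text{opt}}f\,d\mu$: the second term vanishes since $f$ is bounded on $K$ and $\mu(K\setminus K^\text{opt})=0$, and the first equals $f^\text{opt}\,\mu(K^\text{opt})=f^\text{opt}$ because $f\equiv f^\text{opt}$ on $K^\text{opt}$. Thus $\mu$ is feasible for \eqref{eq:moment} and attains the value $f^\text{opt}$, which is the optimal value of \eqref{eq:moment}, so $\mu$ is optimal.

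For ``$\subseteq$'' I would start from an optimal $\mu$, so $\mu\geqslant 0$, $\int_K d\mu=1$, and $\int_K g\,d\mu=\int_K f\,d\mu-f^\text{opt}=0$. The key step is to write $K\setminus K^\text{opt}=\{z\in K\mid g(z)>0\}=\bigcup_{k\geqslant 1}A_k$ with $A_k:=\{z\in K\mid g(z)\geqslant 1/k\}$ closed, and to note that $g\geqslant 1/k$ on $A_k$ with $g\geqslant 0$ and $\mu\geqslant 0$ forces $\tfrac1k\,\mu(A_k)\leqslant\int_{A_k}g\,d\mu\leqslant\int_K g\,d\mu=0$, hence $\mu(A_k)=0$ for every $k$; countable subadditivity then gives $\mu(K\setminus K^\text{opt})=0$, and so $\mu(K^\text{opt})=\int_K d\mu=1$. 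Combined with the previous paragraph this yields \eqref{eq:solutions}.

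For the consequence, assuming $K^\text{opt}=\{z(1),\hdots,z(S)\}$, I would set $\lambda_j:=\mu(\{z(j)\})\in\mathbb{R}_+$ for $\mu$ in \eqref{eq:solutions}; since $\mu$ is carried by the finite set $K^\text{opt}$, $\mu(B)=\mu(B\cap K^\text{opt})=\sum_{j:\,z(j)\in B}\lambda_j$ for every Borel $B\subset K$, i.e.\ $\mu=\sum_{j=1}^S\lambda_j\delta_{z(j)}$ with $\sum_{j=1}^S\lambda_j=\mu(K^\text{opt})=1$, and conversely every such convex combination of Dirac measures at points of $K^\text{opt}$ lies in \eqref{eq:solutions}. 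I do not expect a real obstacle: the only point that needs care is the identification of a ``positive Radon measure'' in the functional-analytic sense used in the text with a bona fide regular Borel measure, which is what legitimizes the sublevel-set estimate and the splitting of $\int_K f\,d\mu$ over $K^\text{opt}$ and its complement — and this identification is exactly what the Riesz representation theorem invoked just before the statement provides.
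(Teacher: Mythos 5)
Your proof is correct and follows essentially the same route as the paper's: characterize optimal $\mu$ by the condition $\int_K (f-f^\text{opt})\,d\mu = 0$, deduce $\mu(K\setminus K^\text{opt})=0$ from positivity of $f-f^\text{opt}$ off $K^\text{opt}$, and run the argument backwards for the converse inclusion. The only difference is one of granularity: where the paper passes directly from $\int_{K\setminus K^\text{opt}}(f-f^\text{opt})\,d\mu=0$ to $\mu(K\setminus K^\text{opt})=0$ without comment, you supply the standard sublevel-set decomposition $A_k=\{g\geqslant 1/k\}$ and countable subadditivity, and you also spell out the finite-atom consequence and the degenerate case $K=\emptyset$, which the paper leaves implicit.
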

\begin{proof}
Consider $\mu$ an optimal solution to \eqref{eq:moment}. It must be that $\int_K (f-f^\text{opt})d\mu = 0$. Thus $\int_{K\setminus K^\text{opt}} (f-f^\text{opt}) d\mu = 0$ and $\mu(K \setminus K^\text{opt})  = \int_{K\setminus K^\text{opt}} d\mu = 0$. Therefore $\mu(K^\text{opt}) =  \int_{K^\text{opt}} d\mu = \mu(K) - \mu(K \setminus K^\text{opt}) =1$. Conversly, if $\mu$ belongs to the set in \eqref{eq:solutions}, then it is feasible for \eqref{eq:moment} and $\int_K (f - f^\text{opt}) d\mu = \int_{ K \setminus K^\text{opt} } (f - f^\text{opt}) d\mu = 0$. Hence $\int_K fd\mu=\int_K f^\text{opt} d\mu = f^\text{opt}\int_K d\mu = f^\text{opt}$.
\end{proof}

In order to dualize the equality constraint in \eqref{eq:moment}, consider the Lagrange function $\mathcal{L} : \mathcal{M}_+(K) \times \mathbb{R} \longrightarrow \mathbb{R}$ defined by
$
(\mu,\lambda) \longmapsto \int_K f d\mu + \lambda \left(1-\int_K d\mu \right)
$.
We have $\mathcal{L}(\mu,\lambda)  =  \lambda + \int_K (f - \lambda) d\mu$ and
\begin{equation}
\inf_{\mu \in \mathcal{M}_+(K)} \int_K (f- \lambda) d\mu =
\left\{
\begin{array}{cl}
\hphantom{-}0 & \text{if} ~ f(z)-\lambda \geqslant 0, ~~~ \forall z \in K, \\
- \infty & \text{else},
\end{array}
\right.
\end{equation}
since, in the second case, we may consider $t \delta_z$ for a $z\in K$ such that $f(z)-\lambda< 0$ and $t \rightarrow + \infty$.
This leads to the dual problem
\begin{equation}
\label{eq:sos}
\begin{array}{rcllll}
f^\text{opt} & = & \sup_{\lambda \in \mathbb{R}}  & \lambda & \mathrm{s.t.} & f(z)- \lambda \geqslant 0,~~ \forall z \in K.
\end{array}
\end{equation}
Primal problem \eqref{eq:moment} gives rise to the complex moment hierarchy in Section~\ref{subsec:Complex Moment Hierarchy}. Dual problem \eqref{eq:sos} gives rise to the complex sum-of-squares hierarchy in Section~\ref{subsec:Complex Sum-of-Squares Hierarchy}.

\subsection{Complex Moment Hierarchy} 
\label{subsec:Complex Moment Hierarchy}

Let $\mathcal{H}$ (respectively $\mathcal{H}_d$) denote the set of sequences of complex numbers $(y_{\alpha,\beta})_{\alpha,\beta \in \mathbb{N}^n}$ (respectively $(y_{\alpha,\beta})_{|\alpha|,|\beta| \leqslant d}$)
such that $\overline{y_{\alpha,\beta}} = y_{\beta,\alpha}$ for all $\alpha,\beta \in \mathbb{N}^n$ (respectively $|\alpha|,|\beta| \leqslant d$). An element $y \in \mathcal{H}$ is said to have a \textit{representing measure} $\mu$ on $K$ if $\mu \in \mathcal{M}_+(K)$ and $y_{\alpha,\beta} = \int_K \bar{z}^\alpha z^\beta d\mu$ for all $\alpha,\beta \in \mathbb{N}^n$.
When $y \in \mathcal{H}$ has a representing measure on $K$, the measure is unique because $R_K(\mathbb{C}[\bar{z},z])$ is dense in $C(K,\mathbb{C})$. The moment problem consists in characterizing the sequences that are representable by a measure on $K$.
For example, Atzmon~\cite[Theorem 2.1]{atzmon-1975} proved that when $K=\{ z \in \mathbb{C} ~|~ |z|=1\}$ the solutions are the sequences $y\in \mathcal{H}$ such that 
$\sum_{m,n,j,k \in \mathbb{N}} c_{n,j} ~ \overline{c}_{m,k} ~ y_{m+j,n+k} \geqslant 0$
and $\sum_{m,n \in \mathbb{N} } w_m \overline{w}_n ~ (y_{m,n} - y_{m+1,n+1}) \geqslant 0$
for all complex numbers $(c_{j,k})_{j,k \in \mathbb{N}}$ and $(w_n)_{n \in \mathbb{N}}$ with only finitely many non-zero terms. Theorem \ref{th:represent} below generalizes this result. 

Consider a feasible point $\mu$ of \eqref{eq:moment} and the sequence $y \in \mathcal{H}$ that has representation measure $\mu$ on $K$. Notice that 
$\int_K f d\mu = \int_K \sum_{|\alpha|,|\beta|\leqslant k} f_{\alpha,\beta} \bar{z}^\alpha z^\beta  d\mu =
\sum_{|\alpha|,|\beta|\leqslant k} f_{\alpha,\beta} \int_K \bar{z}^\alpha z^\beta d\mu 
= \sum_{|\alpha|,|\beta|\leqslant k} ~ f_{\alpha,\beta} y_{\alpha,\beta} =: L_y(f)$ and
$\int_K d\mu = \int_K \bar{z}^0 z^0 d\mu = y_{0,0} = 1$.
For all $p \in \mathbb{C}[z]$, we have $|p|^2 g_i \geqslant 0$ on $K$.  Since $\mu \geqslant 0$, this implies that $\int_K  |p|^2 g_i d\mu \geqslant 0$. Naturally, we also have $\int_K  |p|^2 g_0 d\mu \geqslant 0$ if we define $g_0 := 1$. Define $k_0 :=0$ and $d^\text{min}:=\max\{k, k_1 \hdots,k_m\}$. Consider $d\geqslant d^\text{min}$, $0 \leqslant i \leqslant m$, and $p \in \mathbb{C}_{d-k_i}[z]$. We have $\int_K |p|^2 g_i d\mu 
= \int_K  |\sum_{|\alpha|\leqslant d-k_i} p_{\alpha} z^{\alpha}|^2 ( \sum_{|\gamma|,|\delta| \leqslant k_i} g_{i,\gamma,\delta} \bar{z}^{\gamma} z^{\delta}) d\mu 
= \int_K (\sum_{|\alpha|,|\beta| \leqslant d-k_i} \overline{p}_{\alpha}  p_{\beta} \bar{z}^\alpha z^\beta) ( \sum_{|\gamma|,|\delta| \leqslant k_i} g_{i,\gamma,\delta} \bar{z}^{\gamma} z^{\delta}) d\mu 
= \int_K \sum_{|\alpha|,|\beta| \leqslant d-k_i} \overline{p}_\alpha  p_\beta$

\noindent $\sum_{|\gamma|,|\delta| \leqslant k_i}  g_{i,\gamma,\delta} \bar{z}^{\alpha+\gamma} z^{\beta+\delta} d\mu 
= \sum_{|\alpha|,|\beta| \leqslant d-k_i} \overline{p}_{\alpha}  p_{\beta} \sum_{|\gamma|,|\delta| \leqslant k_i}  g_{i,\gamma,\delta} \int_K \bar{z}^{\alpha+\gamma} z^{\beta+\delta} d\mu 
= \sum_{|\alpha|,|\beta| \leqslant d-k_i} \overline{p}_{\alpha}  p_{\beta} \sum_{|\gamma|,|\delta| \leqslant k_i}  g_{i,\gamma,\delta} ~ y_{\alpha+\gamma,\beta+\delta}
=: \sum_{|\alpha|,|\beta| \leqslant d-k_i} \overline{p}_{\alpha}  p_{\beta} M_{d-k_i}(g_iy)(\alpha,\beta)
= \vec{p}^H M_{d-k_i}(g_iy) \vec{p}$
where $\vec{p} := ( p_\alpha )_{|\alpha|\leqslant d-k_i}$ and $M_{d-k_i}(g_iy)$ is a Hermitian matrix indexed by $|\alpha|,|\beta| \leqslant d-k_i$.
To sum up, $y$ is a feasible point of 
\begin{equation}
\label{eq:momentinf}
\begin{array}{rclll}
\rho & := & \inf_{y\in \mathcal{H}} & L_y(f) & \\
 & & \text{s.t.} &  y_{0,0} = 1, &  \\
 & & &  M_{d-k_i}(g_iy) \succcurlyeq 0, &i = 0, \hdots, m, ~~ \forall d \geqslant d^\text{min},
\end{array}
\end{equation}
with same objective value as $\mu$ in \eqref{eq:moment}. Automatically, $\rho \leqslant f^\text{opt}$. Consider the relaxation of \eqref{eq:momentinf} defined by
\begin{equation}
\label{eq:momentd}
\boxed{
\begin{array}{rclll}
\rho_d & := & \inf_{y\in \mathcal{H}_d} & L_y(f)                                          & \\
           &     & \text{s.t.}                          & y_{0,0} = 1,                                 &  \\
           &     &                                         &  M_{d-k_i}(g_iy) \succcurlyeq 0, & i = 0, \hdots, m,
\end{array}
}
\end{equation}
which we name the \textit{complex moment relaxation of order} $d$ for reasons that will become clear with Theorem \ref{th:represent}. In Section \ref{subsec:Complex Sum-of-Squares Hierarchy}, we will introduce its dual counterpart.
\begin{remark}
\label{rem:Ly}
\normalfont
Given $y \in \mathcal{H}$, the function $L_y$ in this section can be formally be defined by the $\mathbb{C}$-linear operator $L_y : \mathbb{C}[\bar{z},z] \longrightarrow \mathbb{C}$ such that $L_y(\bar{z}^\alpha z^\beta) = y_{\alpha,\beta}$ for all $\alpha,\beta \in \mathbb{N}$ (i.e. Riesz functional). If $\varphi \in \mathbb{C}[\bar{z},z]$ and $\overline{\varphi} = \varphi$, then $\overline{L_y(\varphi)} = L_y(\varphi)$.
Given $l,d \in \mathbb{N}$ and $\varphi \in \mathbb{R}_{l}[\bar{z},z]$, the matrix $M_d(\varphi y)$ can be formally be defined as the Hermitian matrix indexed by $|\alpha|,|\beta| \leqslant d$ such that
$M_{d}(\varphi y)(\alpha,\beta) := L_y(\varphi(z) \bar{z}^\alpha z^\beta) = \sum_{|\gamma|,|\delta| \leqslant l}  ~ \varphi_{\gamma,\delta} ~ y_{\alpha+\gamma,\beta+\delta}$. Notice that $M_{d}(\varphi y)(0,0) = L_y(\varphi)$. Lastly, define $M_d(y) := M_d(g_0 y)$ which we refer to as the \textit{complex moment matrix of order} $d$.
\end{remark}

\subsection{Complex Sum-of-Squares Hierarchy} 
\label{subsec:Complex Sum-of-Squares Hierarchy} 
Given $l \in \mathbb{N}$ and $\varphi \in \mathbb{R}_l[\bar{z},z]$, define $\vec{\varphi} := ( \varphi_{\alpha,\beta})_{|\alpha|,|\beta|\leqslant l}$. This notation is well-defined due to the unicity of the coefficients of $\varphi$.\footnote{\label{fn:sos}The notation is ill-defined in the real case: if  $\varphi: x \in \mathbb{R}^n \longrightarrow \sum_{|\alpha|,|\beta|\leqslant l} \varphi_{\alpha,\beta} x^\alpha x^\beta \in \mathbb{R}$, then the coefficients $\varphi_{\alpha,\beta} \in \mathbb{R}$ are not unique. Thus $\sum_{|\alpha|\leqslant 2l} \sigma_\alpha x^\alpha$ is a real sum of squares if and only if there exists some real numbers $(\varphi_{\alpha,\beta})_{|\alpha|,|\beta|\leqslant l} \succcurlyeq 0$ such that $\sum_{|\alpha|\leqslant 2l} \sigma_\alpha x^\alpha = \sum_{|\alpha|,|\beta|\leqslant l} \varphi_{\alpha,\beta} x^\alpha x^\beta$.} Notice that $\varphi \in \Sigma_l[z]$ if and only if $\vec{\varphi} \succcurlyeq 0$. Also, define $\langle A , B \rangle_{\mathcal{H}_d} := \text{Tr} (AB)$ where $A,B \in \mathcal{H}_d$.
Given $d\geqslant d^\text{min}$, consider the Lagrange function
$\mathcal{L}_d : \mathcal{H}_d \times \mathbb{R} \times \Sigma_{d-k_0}[z] \times \hdots \times \Sigma_{d-k_m}[z] \longrightarrow \mathbb{R}$ defined by
$( y , \lambda, \sigma_0, \hdots , \sigma_m ) \longmapsto L_y(f) + \lambda(1-y_{0,0}) - \sum_{i=0}^m \langle M_{d-k_i}(g_iy) , \vec{\sigma}_i \rangle_{\mathcal{H}_{d-k_i}}$. Given $\sigma_i =: \sum_{j=1}^{r_i} |p_j^i|^2$, i.e. $\vec{\sigma_i} = \sum_{j=1}^{r_i} \vec{p_j}^i (\vec{p_j}^i)^H$, compute $\mathcal{L}_d(y , \lambda, \sigma_0, \hdots , \sigma_m) = \lambda + L_y(f - \lambda) - \sum_{i=0}^m \sum_{j=0}^{r_i} (\vec{p}_j^{\hphantom{.}i})^H M_{d-k_i}(g_iy) \vec{p}_j^{\hphantom{.}i} = \lambda + L_y(f - \lambda) -  \sum_{i=0}^m \sum_{j=0}^{r_i} L_y(|p_j^i|^2 g_i) = \lambda + L_y(f - \lambda - \sum_{i=0}^m \sigma_i g_i)$.
Observe that 
\begin{equation}
\inf_{y \in \mathcal{H}} ~~ L_y\left(f - \lambda - \sum_{i=0}^m \sigma_i g_i\right) =
\left\{
\begin{array}{cl}
\hphantom{-}0 & \text{if} ~ f(z) - \lambda - \sum_{i=0}^m \sigma_i(z) g_i(z) = 0, \\
& \text{for all} ~ z \in \mathbb{C}^n, \\
- \infty & \text{else}.
\end{array}
\right.
\end{equation}
Indeed, in the second case, there exists $z\in \mathbb{C}^n$ such that $f(z) - \lambda - \sum_{i=0}^m \sigma_i(z) g_i(z) \neq 0$. With $(y_{\alpha,\beta})_{\alpha,\beta \in \mathbb{N}} := (\bar{z}^\alpha z^\beta)_{\alpha,\beta \in \mathbb{N}}$, $L_{ty}(f - \lambda - \sum_{i=0}^m \sigma_i g_i) \longrightarrow - \infty$ for either $t\longrightarrow -\infty$ or $t\longrightarrow +\infty$.
The associated dual problem of \eqref{eq:momentd} is thus
\begin{equation}
\label{eq:sosd}
\boxed{
\begin{array}{rcll}
\rho_d^* &:= & \sup_{\lambda,\sigma} & \lambda \\
              &    & \text{s.t.} &  f-\lambda = \sum_{i=0}^m \sigma_i g_i, \\ 
              &    &                &  \lambda \in \mathbb{R},~ \sigma_i \in \Sigma_{d-k_i}[z] ,~ i=0, \hdots, m,
\end{array}
}
\end{equation}
which we name the \textit{complex sum-of-squares relaxation of order} $d$. Consider
\begin{equation}
\label{eq:sosinf}
\begin{array}{rcll}
\rho^* &:= & \sup_{\lambda,\sigma} & \lambda \\
              &    & \text{s.t.} &  f-\lambda = \sum_{i=0}^m \sigma_i g_i, \\ 
              &    &                & \lambda \in \mathbb{R},~ \sigma_i \in \Sigma[z] ,~ i=0, \hdots, m.
\end{array}
\end{equation}
\begin{proposition}
\label{prop:dualconverge}
\normalfont
\textit{We have} $\rho_d^* \leqslant \rho_d$ for all $d\geqslant d^\text{min}$ and $\rho_d^* \longrightarrow \rho^* \leqslant \rho \leqslant f^\text{opt}$.
\end{proposition}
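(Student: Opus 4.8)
The plan is to establish the chain of inequalities in Proposition~\ref{prop:dualconverge} piece by piece, since each link has a conceptually distinct source: weak duality for the order-$d$ relaxations, monotonicity of the sum-of-squares bounds in $d$, a passage to the limit, and the already-established bound $\rho \leqslant f^\text{opt}$ from \eqref{eq:momentinf}.

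First I would prove $\rho_d^* \leqslant \rho_d$ by exhibiting weak duality directly. Take any feasible $y \in \mathcal{H}_d$ for \eqref{eq:momentd} and any feasible $(\lambda, \sigma_0,\hdots,\sigma_m)$ for \eqref{eq:sosd}. Using the identity for $\mathcal{L}_d$ computed just before \eqref{eq:sosd}, namely $L_y(f-\lambda) = \sum_{i=0}^m \langle M_{d-k_i}(g_iy), \vec{\sigma}_i\rangle_{\mathcal{H}_{d-k_i}}$ whenever $f - \lambda = \sum_i \sigma_i g_i$, I get $L_y(f) - \lambda y_{0,0} = \sum_{i=0}^m \langle M_{d-k_i}(g_iy), \vec{\sigma}_i\rangle$. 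Each summand is nonnegative because $M_{d-k_i}(g_iy) \succcurlyeq 0$ and $\vec{\sigma}_i \succcurlyeq 0$ (the latter by the characterization $\sigma_i \in \Sigma_{d-k_i}[z] \iff \vec{\sigma}_i \succcurlyeq 0$ noted in Section~\ref{subsec:Complex Sum-of-Squares Hierarchy}), and the trace of a product of two positive semidefinite Hermitian matrices is nonnegative. Since $y_{0,0} = 1$, this gives $L_y(f) \geqslant \lambda$; taking the infimum over $y$ and the supremum over $(\lambda,\sigma)$ yields $\rho_d^* \leqslant \rho_d$.

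Next, monotonicity $\rho_d^* \leqslant \rho_{d+1}^*$: any feasible triple for \eqref{eq:sosd} at order $d$ is also feasible at order $d+1$, because $\Sigma_{d-k_i}[z] \subset \Sigma_{d+1-k_i}[z]$, so the feasible set only grows and the supremum can only increase. Hence $(\rho_d^*)_{d \geqslant d^\text{min}}$ is nondecreasing; moreover it is bounded above by $\rho$ (from the first part together with $\rho_d \leqslant \rho$, which follows since \eqref{eq:momentd} is a relaxation of \eqref{eq:momentinf}), so it converges, and its limit is $\sup_d \rho_d^*$. To identify this limit with $\rho^*$: any feasible point of \eqref{eq:sosd} at some order $d$ is feasible for \eqref{eq:sosinf}, giving $\rho_d^* \leqslant \rho^*$ for all $d$, hence $\lim_d \rho_d^* \leqslant \rho^*$; conversely, any feasible $(\lambda,\sigma_0,\hdots,\sigma_m)$ for \eqref{eq:sosinf} has each $\sigma_i \in \Sigma[z]$, hence $\sigma_i \in \Sigma_{l_i}[z]$ for some finite $l_i$, so for $d$ large enough (namely $d \geqslant \max_i(l_i + k_i)$ and $d \geqslant d^\text{min}$) the same triple is feasible for \eqref{eq:sosd}, whence $\lambda \leqslant \rho_d^* \leqslant \lim_d \rho_d^*$; taking the supremum over such triples gives $\rho^* \leqslant \lim_d \rho_d^*$. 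Thus $\rho_d^* \longrightarrow \rho^*$.

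Finally, $\rho^* \leqslant \rho$: if $(\lambda, \sigma_0,\hdots,\sigma_m)$ is feasible for \eqref{eq:sosinf} and $y$ is feasible for \eqref{eq:momentinf}, the same weak-duality computation as in the first step (now with all matrices unrestricted in order) gives $L_y(f) \geqslant \lambda$, so $\rho \geqslant \lambda$ for every feasible $\lambda$, hence $\rho \geqslant \rho^*$. Combined with $\rho \leqslant f^\text{opt}$ already noted after \eqref{eq:momentinf}, the whole chain $\rho_d^* \leqslant \rho_d$ and $\rho_d^* \longrightarrow \rho^* \leqslant \rho \leqslant f^\text{opt}$ follows. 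I expect the only mildly delicate point to be the bookkeeping in the weak-duality identity — making sure the substitution of $f - \lambda = \sum_i \sigma_i g_i$ into $L_y$ and the rewriting $L_y(|p_j^i|^2 g_i) = (\vec{p}_j^{\,i})^H M_{d-k_i}(g_iy)\vec{p}_j^{\,i}$ are applied with the correct index ranges so that everything lands inside $\mathcal{H}_d$ — but this is exactly the computation already carried out in the text preceding \eqref{eq:sosd}, so it can be invoked rather than redone.
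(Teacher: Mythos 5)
Your proposal is correct and follows essentially the same route as the paper: weak duality gives $\rho_d^*\leqslant\rho_d$, monotonicity of the sum-of-squares bounds in $d$ (via nestedness of the $\Sigma_{d-k_i}[z]$) gives convergence to a limit, the limit is identified with $\rho^*$ by truncating/extending degree, and $\rho\leqslant f^{\text{opt}}$ was already established. The only cosmetic difference is that you obtain $\rho^*\leqslant\rho$ by a direct weak-duality computation at the level of \eqref{eq:sosinf} and \eqref{eq:momentinf}, whereas the paper passes through the limit $\rho_{\text{lim}}$ of $(\rho_d)_d$ and sandwiches; you also spell out the weak-duality trace inequality that the paper only invokes by name, which is a welcome bit of added detail rather than a departure.
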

\begin{proof}
The sequence $(\rho_d^*)_{d\geqslant d^\text{min}}$ is non-decreasing and upper bounded by $\rho^* \in \mathbb{R}\cup\{\pm \infty\}$. Thus it converges towards some limit $\rho^*_\text{lim} \in \mathbb{R}\cup\{\pm \infty\}$ such that $\rho^*_\text{lim} \leqslant \rho^*$. If $\rho^* = - \infty$, then $\rho_d^* = - \infty$ for all $d\geqslant d^\text{min}$ and $\rho_d^* \longrightarrow \rho^*$. If not, by definiton of the optimum $\rho^*$, there exists a sequence $(\lambda^l,\sigma_0^l,\hdots,\sigma_m^l)$ of feasible points such that $\lambda^l \leqslant \rho^*$ and $\lambda^l \longrightarrow \rho^*$. To each $l\in\mathbb{N}$, we may associate an integer $d(l)\in\mathbb{N}$ such that $(\lambda^l,\sigma_0^l,\hdots,\sigma_m^l)$ is a feasible point of the complex sum-of-squares relaxation of order $d(l)$. Thus $\lambda^l \leqslant \rho_{d(l)}^* \leqslant \rho^*$. As a result, $\rho_\text{limit}^* = \rho^*$. Moreover, $(\rho_d)_{d\geqslant d^\text{min}}$ is non-decreasing and upper bounded by $\rho \in \mathbb{R}\cup\{\pm \infty\}$. Thus it converges towards some limit $\rho_\text{lim} \in \mathbb{R}\cup\{\pm \infty\}$ such that $\rho_\text{lim} \leqslant \rho$. Moreover, weak duality implies that $\rho_d^* \leqslant \rho_d ~ (\leqslant \rho)$. Thus $\rho^* \leqslant \rho_\text{lim} \leqslant \rho$. It was shown in Section \ref{subsec:Complex Moment Hierarchy} that $\rho \leqslant f^\text{opt}$.
\end{proof}
\begin{remark}
\label{rem:duality}
\normalfont
Problems \eqref{eq:sosinf} and \eqref{eq:momentinf} may be interpreted as a pair of primal-dual linear programs in infinite-dimensional spaces~\cite{anderson-1987}. Consider the duality bracket $\langle .,. \rangle$ defined from $\mathbb{R}[\bar{z},z] \times \mathcal{H}$ to $\mathbb{R}$ by $\langle \varphi , y \rangle := L_y(\varphi)$. A sequence $(\varphi^n)_{n\in \mathbb{N}}$ in $\mathbb{R}[\bar{z},z]$ is said to converge weakly towards $\varphi \in \mathbb{R}[\bar{z},z]$ if for all $y \in \mathcal{H}$, we have $\langle \varphi^n , y \rangle \longrightarrow \langle \varphi , y \rangle$. Consider the weakly continuous $\mathbb{R}$-linear operator $A : \mathbb{R}[\bar{z},z] \longrightarrow \mathbb{R}[\bar{z},z]$ defined by $\varphi \longmapsto \varphi - \varphi_{0,0}$. Its dual $A^* : \mathcal{H} \longrightarrow \mathcal{H}$ is defined by $y \longmapsto y - y_{0,0} \delta_{0,0}$ where $(\delta_{0,0})_{0,0} = 1$ and $(\delta_{0,0})_{\alpha,\beta} = 0$ if $(\alpha,\beta)\neq(0,0)$. Indeed, $\langle A\varphi , y \rangle = \langle \varphi , A^* y \rangle$ for all $(\varphi,y) \in \mathbb{R}[\bar{z},z] \times \mathcal{H}$. Consider the convex pointed cone defined by $C:= \Sigma[z] g_0 + \hdots + \Sigma[z]g_m$ and its dual cone $C^* := \{ y \in \mathcal{H} ~ | ~ \forall \varphi \in C, ~ \langle \varphi , y \rangle \geqslant 0 \}$. If $b:=Af$, then 
\begin{equation}
\label{eq:pd}
\begin{array}{rcllll}
f_{0,0} - \rho^* & = & \inf_{\varphi \in \mathbb{R}[\bar{z},z]} & \langle \varphi , \delta_{0,0} \rangle & \text{s.t.} & A\varphi = b  ~~ \& ~~ \varphi \in C, \\
f_{0,0} - \rho\hphantom{^*}  & = & \sup_{y \in \mathcal{H}} & \langle b , y \rangle & \text{s.t.} & \delta_{0,0} - A^*y \in C^*.
\end{array}
\end{equation}
Let $\text{cl}(C)$ denote the weak closure of $C$ in $\mathbb{R}[\bar{z},z]$. \cite[5.91 Bipolar Theorem]{aliprantis-1999}\footnote{We wish to thank Jean-Bernard Baillon for bringing this reference to our attention.} implies that $\text{cl}(C) = C^{**}$. Below, Theorem \ref{th:angelo} and Theorem \ref{th:represent} provide a sufficient condition ensuring no duality gap in \eqref{eq:pd} and $\text{cl}(C) = \{ \varphi \in \mathbb{R}[\bar{z},z] ~|~ \varphi_{|K} \geqslant 0 \}$ respectively.
\end{remark}

\subsection{Convergence of the Complex Hierarchy} 
\label{subsec:Convergence of the Complex Hierarchy} 
We turn our attention to a result from algebraic geometry discovered in 2008.
\begin{theorem}[D'Angelo's and Putinar's Positivstellenstatz~\cite{angelo-2008}]
\label{th:angelo}
If one of the constraints that define $K$ is a sphere constraint $|z_1|^2 + \hdots + |z_n|^2 = 1$, and if $f_{|K} > 0$, then there exists $\sigma_0, \hdots, \sigma_m \in \Sigma[z]$ such that $f = \sum_{i=0}^m \sigma_i g_i$.
\end{theorem}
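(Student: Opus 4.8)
The plan is to read the statement as the Hermitian transcription of Putinar's Positivstellensatz, with the sphere constraint playing the role of the Archimedean hypothesis. Put $g_0:=1$ and let $M:=\Sigma[z]g_0+\Sigma[z]g_1+\hdots+\Sigma[z]g_m\subseteq\mathbb{R}[\bar z,z]$; this is a convex cone closed under addition and under multiplication by elements of $\Sigma[z]$, it contains $1$, and every element of $M$ is nonnegative on $K$. The assertion to prove is precisely $f\in M$. Since one of the constraints defining $K$ is the sphere constraint, we may assume $g_{m-1}=1-(|z_1|^2+\hdots+|z_n|^2)$ and $g_m=-g_{m-1}$. Because $\varphi\mapsto\vec\varphi$ identifies $\mathbb{R}_l[\bar z,z]$ with the Hermitian matrices indexed by $|\alpha|\leqslant l$ and $\varphi\in\Sigma_l[z]\iff\vec\varphi\succcurlyeq 0$, splitting a Hermitian matrix into its positive and negative spectral parts shows every real-valued complex polynomial is a difference of two elements of $\Sigma[z]$; hence $\Sigma[z]g_{m-1}+\Sigma[z]g_m$ already contains $\big(|z_1|^2+\hdots+|z_n|^2-1\big)\varphi$ for every $\varphi\in\mathbb{R}[\bar z,z]$, i.e. $M$ contains the whole real ideal generated by the sphere polynomial. (This is exactly the step that fails over $\mathbb{R}$, since there the coefficient matrix of a polynomial is not unique.)

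Writing $s:=|z_1|^2+\hdots+|z_n|^2$, the next step is to show $M$ is Archimedean. The multinomial identity $s^d=\sum_{|\delta|=d}\binom{d}{\delta}|z^\delta|^2$ shows that for a monomial $z^\gamma$ with $|\gamma|=d\geqslant 1$ the polynomial $s^d-|z^\gamma|^2$ has nonnegative coefficients against the squared moduli $|z^\delta|^2$ (note $\binom{d}{\gamma}\geqslant 1$) and so lies in $\Sigma[z]$; combined with the telescoping identity $1-s^d=(1-s)(1+s+\hdots+s^{d-1})\in M$, this gives $1-|z^\gamma|^2\in M$ for every monomial. For arbitrary $\varphi\in\mathbb{R}_l[\bar z,z]$, choosing $N$ with $N\,\mathrm{I}-\vec\varphi\succcurlyeq 0$ gives $N\sum_{|\gamma|\leqslant l}|z^\gamma|^2-\varphi\in\Sigma[z]$, and adding the relations $N(1-|z^\gamma|^2)\in M$ over $|\gamma|\leqslant l$ yields a constant $C$ with $C-\varphi\in M$; symmetrically $C+\varphi\in M$. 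Hence $M$ is Archimedean, and in particular $1$ lies in the algebraic interior of $M$.

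Finally I would run the standard representation argument. If $f\notin M$, Eidelheit separation (separating a point from a convex set with nonempty algebraic interior) yields a nonzero $\mathbb{R}$-linear $L:\mathbb{R}[\bar z,z]\to\mathbb{R}$ with $L\geqslant 0$ on $M$ and $L(f)\leqslant 0$; the Archimedean property forces $L(1)>0$, so normalize $L(1)=1$, then extend $L$ $\mathbb{C}$-linearly to $\mathbb{C}[\bar z,z]$. Since $L(|p|^2)=L(|p|^2 g_0)\geqslant 0$, the form $(p,q)\mapsto L(\bar p q)$ is positive semidefinite; in its GNS completion the multiplication operators $A_j$ and $B_j$ by the \emph{real-valued} polynomials $\mathrm{Re}\,z_j$ and $\mathrm{Im}\,z_j$ form a commuting family of self-adjoint operators, bounded because $1-|z_j|^2\in M$. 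Their joint spectral measure applied to the cyclic vector $[1]$ produces a positive Radon measure $\mu$ of compact support in $\mathbb{C}^n$ with $L(\varphi)=\int\varphi\,d\mu$ for all $\varphi\in\mathbb{R}[\bar z,z]$, so $\int d\mu=L(1)=1$. The inequalities $0\leqslant L(|p|^2 g_i)=\int|p|^2 g_i\,d\mu$, valid for all $p\in\mathbb{C}[z]$, localize $\mathrm{supp}\,\mu\subseteq K$ by the usual argument (using density of $R_K(\mathbb{C}[\bar z,z])$ in $C(K,\mathbb{C})$). Then $L(f)=\int_K f\,d\mu\geqslant(\min_K f)\,\mu(K)=\min_K f>0$, contradicting $L(f)\leqslant 0$. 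Therefore $f\in M$, i.e. $f=\sum_{i=0}^m\sigma_i g_i$ with $\sigma_i\in\Sigma[z]$.

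I expect the third step to be the main obstacle. Passing from the functional $L$ to a representing measure needs a GNS model that is commuting \emph{and} self-adjoint — which is why one works with $\mathrm{Re}\,z_j,\mathrm{Im}\,z_j$ rather than with $z_j$ — and one must either verify that $M$ is closed in a suitable topology or bypass closedness via Putinar's truncation device; the localization of $\mathrm{supp}\,\mu$ inside $K$ is also more delicate than over $\mathbb{R}$ because the holomorphic polynomials are not closed under conjugation. The algebraic content of the first two steps is routine, and the one genuinely complex-analytic ingredient — that nontrivial Hermitian sum-of-squares certificates exist at all — is Quillen's theorem (rediscovered by Catlin and D'Angelo), which is exactly the unconstrained bihomogeneous special case of the present statement.
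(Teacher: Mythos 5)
Your proposal takes a genuinely different route from the paper. The paper's proof is essentially a reformulation exercise: it invokes D'Angelo and Putinar's published result (with $r\in\mathbb{R}[\bar z,z]$ appearing against the sphere factor) and then applies their Proposition 1.2 to split $r$ into a difference $\sigma_{m-1}-\sigma_m$ of elements of $\Sigma[z]$, which is precisely the observation in your first step that a Hermitian coefficient matrix splits into positive and negative spectral parts. You instead attempt a self-contained Eidelheit/GNS argument transcribing the standard proof of Putinar's real Positivstellensatz. Your first two steps are correct and nicely done: the uniqueness of the Hermitian coefficient matrix makes the real ideal of the sphere land inside $M$, and the multinomial plus telescoping identities do yield Archimedeanness, so separation produces a normalized functional $L\geqslant 0$ on $M$ with $L(f)\leqslant 0$.

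The gap is exactly where you feared, in step 3, and it is more than a technical inconvenience. The GNS Hilbert space you build carries the inner product $L(\bar p q)$ only for $p,q\in\mathbb{C}[z]$ (positivity of $L$ on $\Sigma[z]$ gives you nothing for $\bar p p$ when $p\in\mathbb{C}[\bar z,z]$; for instance $(1-|z|^2)^2\notin\Sigma[z]$). On this space, multiplication by $\operatorname{Re}z_j$ or $\operatorname{Im}z_j$ is not defined, because $\bar z_j\cdot\mathbb{C}[z]\not\subset\mathbb{C}[z]$. What you actually have are the holomorphic shifts $T_j:[p]\mapsto[z_jp]$, which commute and are bounded by Archimedeanness, and their Hilbert-space adjoints $T_j^*$. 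Writing $A_j=\tfrac{1}{2}(T_j+T_j^*)$, $B_j=\tfrac{1}{2\mathbf i}(T_j-T_j^*)$ gives self-adjoint operators, but for this family to commute you need $T_jT_k^*=T_k^*T_j$ for all $j,k$ --- that is, joint normality of the shifts --- and this does not follow from $L\geqslant 0$ on $M$. The sphere relation only gives $\sum_j T_j^*T_j=\operatorname{Id}$, which is not normality. This is the genuinely complex obstruction: over $\mathbb{R}$ the shifts are automatically self-adjoint so the problem is invisible, while over $\mathbb{C}$ one needs extra analytic input (Quillen/Catlin--D'Angelo via bihomogenization, or the Point 3 localizing-matrix condition used in the finite-rank Theorem~\ref{th:trunc} of this paper, where the trace trick kills the commutator). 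You name Quillen's theorem as the key ingredient at the end of your proposal, but the GNS scheme you sketch never invokes it, and without it the spectral-measure step cannot close.
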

\begin{proof}
D'Angelo and Putinar wrote the theorem slightly differently. Say that constraints $g_{m-1}$ and $g_m$ are such that $g_{m-1}=s$ and $g_m=-s$ where $s(z):= 1 - |z_1|^2 - \hdots - |z_n|^2$. With the assumptions of Theorem \ref{th:angelo}, the authors of \cite[Theorem 3.1]{angelo-2008} show that there exists $\sigma_0, \hdots, \sigma_{m-2} \in \Sigma[z]$ and $r \in \mathbb{R}[\bar{z},z]$ such that
$f(z) =\sum_{i=0}^{m-2} \sigma_i(z) g_i(z) +  r(z)s(z)$ for all $z\in \mathbb{C}^n$. Thanks to~\cite[Proposition 1.2]{angelo-2010}, there exists $\sigma_{m-1},\sigma_m \in \Sigma[z]$ such that $r = \sigma_{m-1} - \sigma_m$ hence the desired result. 
\end{proof}

Theorem \ref{th:angelo} can easily be generalized to any sphere $|z_1|^2 + \hdots + |z_n|^2 = R^2$ of radius $R > 0$. With scaled variable $w = \frac{z}{R} \in \mathbb{C}^n$, the sphere constraint has radius 1 and a monomial of \eqref{eq:complexPOP} with coefficient $c_{\alpha,\beta}\in \mathbb{C}$  reads $c_{\alpha,\beta} \bar{z}^\alpha z^\beta = c_{\alpha,\beta} (R\overline{w})^\alpha (Rw)^\beta = R^{|\alpha|+|\beta|} c_{\alpha,\beta} \overline{w}^\alpha w^\beta$. With the scaled coefficients $R^{|\alpha|+|\beta|} c_{\alpha,\beta}$, Theorem \ref{th:angelo} can then be applied. Reverting back to the old scale $z = Rw$ 
leads to the desired result. Accordingly, we define the following statement which is true only when stated:
\begin{equation}
\label{eq:sa}
\textbf{Sphere Assumption:} ~~~
\boxed{
\begin{array}{l}
\text{One of the constraints of \eqref{eq:complexPOP} is a sphere} \\
|z_1|^2 + \hdots + |z_n|^2 = R^2 ~ \text{for some} ~ R > 0.
\end{array}
}
\end{equation}
\begin{corollary}
\label{cor:convergence}
Under the sphere assumption \eqref{eq:sa}, $\rho_d^* \rightarrow f^\text{opt}$ and $\rho_d \rightarrow f^\text{opt}$.
\end{corollary}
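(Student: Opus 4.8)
The plan is to combine the convergence facts already recorded in Proposition~\ref{prop:dualconverge} with the Positivstellensatz of Theorem~\ref{th:angelo}. From Proposition~\ref{prop:dualconverge} we already know that the sequences $(\rho_d^*)_{d\geqslant d^\text{min}}$ and $(\rho_d)_{d\geqslant d^\text{min}}$ are non-decreasing, that $\rho_d^* \to \rho^*$ with $\rho^* \leqslant \rho \leqslant f^\text{opt}$, and that $\rho_d \to \rho_\text{lim}$ for some limit satisfying $\rho^* \leqslant \rho_\text{lim} \leqslant \rho$. Hence it suffices to prove the single reverse inequality $\rho^* \geqslant f^\text{opt}$: this squeezes $\rho^* = \rho_\text{lim} = \rho = f^\text{opt}$ and therefore yields simultaneously $\rho_d^* \to f^\text{opt}$ and $\rho_d \to f^\text{opt}$.

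To establish $\rho^* \geqslant f^\text{opt}$, I would fix an arbitrary real number $\lambda < f^\text{opt}$ (such $\lambda$ exist since $K$ compact and $f$ continuous force $f^\text{opt}\in\mathbb{R}\cup\{+\infty\}$). On $K$ one has $f(z) \geqslant f^\text{opt} > \lambda$, so the real-valued complex polynomial $f-\lambda$ is strictly positive on $K$ --- vacuously if $K=\emptyset$, and because $f^\text{opt}=\min_K f$ otherwise. By the sphere assumption~\eqref{eq:sa}, one of the defining constraints of $K$ is a sphere $|z_1|^2+\hdots+|z_n|^2 = R^2$; applying Theorem~\ref{th:angelo} to the strictly positive polynomial $f-\lambda$ produces $\sigma_0,\hdots,\sigma_m \in \Sigma[z]$ with $f-\lambda = \sum_{i=0}^m \sigma_i g_i$ as a polynomial identity on $\mathbb{C}^n$. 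Thus $(\lambda,\sigma_0,\hdots,\sigma_m)$ is feasible for problem~\eqref{eq:sosinf}, whence $\rho^* \geqslant \lambda$; letting $\lambda \uparrow f^\text{opt}$ gives $\rho^* \geqslant f^\text{opt}$, as required.

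The only point demanding a little care is the invocation of the Positivstellensatz: Theorem~\ref{th:angelo} is stated for the unit sphere, whereas \eqref{eq:sa} allows an arbitrary radius $R>0$. This gap is closed exactly by the change of variables $w = z/R$ spelled out in the paragraph preceding \eqref{eq:sa}, under which the sphere constraint becomes $|w_1|^2+\hdots+|w_n|^2 = 1$ and each monomial $c_{\alpha,\beta}\bar z^\alpha z^\beta$ becomes $R^{|\alpha|+|\beta|}c_{\alpha,\beta}\overline{w}^\alpha w^\beta$, so the decomposition obtained in the $w$-coordinates transports back to the required identity in $z$. No degree bookkeeping is needed, since \eqref{eq:sosinf} imposes no degree bound on the $\sigma_i$; the truncated relaxations \eqref{eq:sosd} and \eqref{eq:momentd} enter only through the limiting statements of Proposition~\ref{prop:dualconverge}. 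In short, Theorem~\ref{th:angelo} does all the work and there is no substantial obstacle beyond correctly interfacing it with the infinite-order problem \eqref{eq:sosinf}.
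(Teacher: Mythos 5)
Your proof is correct and follows essentially the same route as the paper's: apply Theorem~\ref{th:angelo} to $f-\lambda$ for every $\lambda<f^{\text{opt}}$ (the paper writes $\lambda = f^{\text{opt}}-\epsilon$) to get $\rho^*\geqslant f^{\text{opt}}$, then invoke Proposition~\ref{prop:dualconverge} to squeeze both sequences to $f^{\text{opt}}$. The rescaling remark about radius $R$ versus the unit sphere is likewise exactly the observation the paper makes in the paragraph preceding~\eqref{eq:sa}.
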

\begin{proof} Theorem \ref{th:angelo} implies that $\rho^* = f^\text{opt}$
because for all $\epsilon > 0$, function \mbox{$f-(f^\text{opt}-\epsilon)$} is positive on $K$. The sequences $(\rho_d^*)_{d\geqslant d^\text{min}}$ and $(\rho_d)_{d\geqslant d^\text{min}}$ converge towards $f^\text{opt}$ due to Proposition \ref{prop:dualconverge}.
\end{proof}

To require a sphere constraint in a complex polynomial optimization problem seems very restrictive and irrelevant for many problems. But in fact, a sphere constraint can be applied to any complex polynomial optimization problem \eqref{eq:complexPOP} with a feasible set contained in a ball $|z_1|^2 + \hdots + |z_{n}|^2 \leqslant R^2$ of known radius $R > 0$. Indeed, simply add a slack variable $z_{n+1} \in \mathbb{C}$ and the constraint $|z_1|^2 + \hdots + |z_{n+1}|^2 = R^2$.
Let $\hat{K}$ denote the feasible set of the problem in $n+1$ variables.
If $(z_1,\hdots,z_{n+1}) \in \hat{K}$, then $(z_1,\hdots,z_n) \in K$  and has the same objective value. Conversly, if $(z_1,\hdots,z_n) \in K$, then $(z_1,\hdots,z_{n+1}) \in \hat{K}$ for all $z_{n+1} \in \mathbb{C}$ such that $|z_{n+1}|^2 = R^2 - |z_1|^2  \hdots - |z_n|^2$. Again, the objective value is unchanged.
To ensure a bijection between $K$ and $\hat{K}$, add yet two more constraints $\textbf{i}z_{n+1}  - \textbf{i}\overline{z}_{n+1} =  0$ and $z_{n+1} + \overline{z}_{n+1} \geqslant 0$, thereby preserving the number of global solutions. In that case, the application from $K$ to $\hat{K}$ defined by $(z_1,\hdots,z_n) \longmapsto (z_1,\hdots,z_n,\sqrt{R^2-|z_1|^2-\hdots-|z_n|^2})$ is a bijection. Adding the two extra constraints is optional and not required for convergence of optimal values. 

As seen in Theorem \ref{th:angelo}, an equality constraint may be enforced via two opposite inequality constraints. Let $h_1,\hdots,h_e$ denote $e\in \mathbb{N}^*$ equality constraints in polynomial optimization problem \eqref{eq:complexPOP}.
Putinar and Scheiderer~\cite[Propositions 6.6 and 3.2 (iii)]{putinar-2013} show that the sphere assumption in D'Angelo's and Putinar's Positivstellensatz may be weakened to the existence of $r_1,\hdots,r_e \in \mathbb{R}[\bar{z},z]$, $\sigma \in \Sigma[z]$, and $a\in \mathbb{R}$ such that
\begin{equation}
\label{eq:weak}
\sum_{j=1}^e r_j(z) h_j(z) = \sum_{i=1}^n |z_i|^2 + \sigma(z)+a, ~~~~~~ \forall z\in \mathbb{C}^n.
\end{equation}
If the constraints include $|z_1|^2 - 1 = \hdots = |z_n|^2 - 1 = 0$, the assumption is satisfied by $r_1 = \hdots = r_n = 1$, $\sigma = 0$ and $a=-n$. In particular, there is no need to add a slack variable in the non-bipartite Grothendieck problem over the complex numbers~\cite{bandeira-2014}.
\begin{example}
\label{ex:angelo}
\normalfont
D'Angelo and Putinar~\cite{angelo-2008} consider $\frac{1}{3} < a < \frac{4}{9}$ and problem
\begin{equation}
\label{eq:ex1}
\begin{array}{llcl}
 \inf_{z \in \mathbb{C}} & f(z) & := & 1-\frac{4}{3}|z|^2+a|z|^4 \\
            \text{s.t.}          & g(z) & := & 1 - |z|^2 \geqslant 0,
\end{array}
\end{equation}
whose set of global solutions is $K^\text{opt} = \{ z \in \mathbb{C}~|~ |z| =1\}$ and $f^\text{opt} = a-\frac{1}{3}>0$. They prove that the decomposition $f = \sigma_0 + \sigma_1g ~ (\sigma_0,\sigma_1 \in \Sigma[z])$ of Theorem \ref{th:angelo} does not hold. As a result, the optimal values of the complex sum-of-squares relaxations cannot exceed 0 even though $f^\text{opt}>0$. Indeed, if $\rho_d^* > 0$ for some order $d\geqslant d^\text{min}$, then there exists $\lambda \geqslant \frac{\rho_d^*}{2}$ and $\sigma_0,\sigma_1 \in \Sigma_d[z]$ such that $f - \lambda = \sigma_0 + \sigma_1 g$. Thus $f = \lambda + \sigma_0 + \sigma_1 g$ where $\lambda + \sigma_0 \in \Sigma_d[z]$, which is a contradiction. We suggest solving
\begin{equation}
\label{eq:ex1sphere}
\begin{array}{llcl}
 \inf_{z_1,z_2 \in \mathbb{C}} & \hat{f}(z_1,z_2) & := & 1-\frac{4}{3}|z_1|^2+a|z_1|^4 \\
            \text{s.t.}          & \hat{g}(z_1,z_2) & := & 1 - |z_1|^2 - |z_2|^2 = 0.
\end{array}
\end{equation}
For all $\lambda < f^\text{opt}$, there exists $\hat{\sigma}_0 \in \Sigma[z_1,z_2]$ and $\hat{r}\in \mathbb{R}[\overline{z}_1,\overline{z}_2,z_1,z_2]$ such that $\hat{f}(z_1,z_2) - \lambda = \hat{\sigma}_0(z_1,z_2) + \hat{r}(z_1,z_2) \hat{g}(z_1,z_2)$ for all $z_1,z_2 \in \mathbb{C}$.
Plug in $z_1=z$ and $z_2=0$ and obtain $ f(z) - \lambda = \hat{\sigma}_0(z,0) + \hat{r}(z,0) g(z)$ for all $z \in \mathbb{C}$.
While function $z \longmapsto \hat{\sigma}_0(z,0)$ belongs to $\Sigma[z]$, function $z \longmapsto \hat{r}(z,0)$ does not! Hence we do not contradict the fact that $f = \sigma_0 + \sigma_1g ~ (\sigma_0,\sigma_1 \in \Sigma[z])$ is impossible. Consider $a = \frac{1}{2}(\frac{1}{3} + \frac{4}{9}) = \frac{7}{18}$ so that $f^\text{opt} = \frac{1}{18}$. Notice that $d^\text{min} = 2$ for \eqref{eq:ex1} and \eqref{eq:ex1sphere}. The complex relaxations of orders $2 \leqslant d \leqslant 3$ of \eqref{eq:ex1} yield\footnote{MATLAB 2013a, YALMIP \mbox{2015.06.26}~\cite{yalmip}, and MOSEK are used for the numerical experiments.} the value $-0.3333$. The complex relaxation of order 2 of \eqref{eq:ex1sphere} yields the value $0.0556 ~ (\approx f^\text{opt})$ and optimal polynomials $\hat{\sigma}_0(z_1,z_2) = 0.2780 |z_2|^2 + 0.2776 |z_1 z_2|^2 + 0.6667 |z_2|^4$ and $\hat{r}(z_1,z_2) = 0.9444 - 0.3889|z_1|^2 + 0.6665 |z_2|^2$.
\end{example}

\begin{proposition}
\label{prop:rank}
\normalfont
\textit{Assume that the sphere assumption~\eqref{eq:sa} holds, that $n>1$, and that $y \in \mathcal{H}_d$ is an optimal solution to the complex moment relaxation of order} 
$d \geqslant d^\text{min}$. 
\textit{With $d_K := \max_{1 \leqslant i \leqslant m} k_i$} (\textit{$k_i$ is defined above}~\eqref{eq:complexPOP})\textit{ and $d^\text{min} \leqslant t \leqslant d$, if}
\begin{enumerate}
\item $\text{rank} ~ M_{t} (y) = \text{rank} ~ M_{t-d_K} (y) ~ (=: S)$,
\item $ \begin{pmatrix} 
M_{t-d_K}(y) & M_{t-d_K}(\bar{z}_i y) & M_{t-d_K}(\bar{z}_j y) \\ 
M_{t-d_K}(z_i y) & M_{t-d_K}(|z_i|^2 y)  & M_{t-d_K}(\bar{z}_j z_i y) \\
M_{t-d_K}(z_j y) & M_{t-d_K}(\bar{z}_i z_j y) & M_{t-d_K}(|z_j|^2 y)
\end{pmatrix} 
\succcurlyeq 0,
$
\textit{for all} $1 \leqslant i < j \leqslant n$,
\end{enumerate}
\textit{then $\rho_d = f^\text{opt}$ and complex polynomial problem \eqref{eq:complexPOP} has at least $S$ global solutions.}
\end{proposition}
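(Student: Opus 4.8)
\emph{Proof plan.} The plan is to turn the two numbered hypotheses into a finitely atomic representing measure for the optimal sequence $y$, via the truncated complex moment problem (Theorem~\ref{th:trunc}), and then to identify the atoms of that measure with global minimizers of \eqref{eq:complexPOP}. First I would record that $d_K = \max_{1\leqslant i\leqslant m}k_i \leqslant \max\{k,k_1,\hdots,k_m\} = d^\text{min}\leqslant t\leqslant d$, so that $M_{t-d_K}(y)$, $M_t(y)$ and $M_d(y)$ are all defined and, being principal submatrices of $M_d(y) = M_{d-k_0}(g_0 y)\succcurlyeq 0$ (the relaxation constraint of index $i=0$), are positive semidefinite. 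I would then apply Theorem~\ref{th:trunc}, whose hypotheses are furnished exactly by the sphere assumption~\eqref{eq:sa}, the condition $n>1$, the positive semidefiniteness just recorded, the flatness condition~1, and the positive semidefiniteness condition~2 on the $3\times 3$ block matrices. That theorem returns a representing measure $\mu$ of $y$; being of minimal size, $\mu = \sum_{j=1}^S \lambda_j\,\delta_{z(j)}$ for $S$ distinct points $z(1),\hdots,z(S)\in\mathbb{C}^n$ and weights $\lambda_1,\hdots,\lambda_S>0$ with $\sum_{j=1}^S \lambda_j = y_{0,0} = 1$, and moreover $y_{\alpha,\beta} = \int \bar z^\alpha z^\beta\,d\mu$ for all $|\alpha|,|\beta|\leqslant t$.

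Next I would verify that each atom is feasible. Because $\text{rank}\,M_{t-d_K}(y) = S$ and $\mu$ reproduces all moments of bidegree up to $(t-d_K,t-d_K)$, the matrix $\big(z(k)^\gamma\big)_{|\gamma|\leqslant t-d_K,\ 1\leqslant k\leqslant S}$ has rank $S$, so there exist holomorphic interpolation polynomials $p_1,\hdots,p_S\in\mathbb{C}_{t-d_K}[z]$ with $p_j(z(k)) = \delta_{jk}$. Fix $i\in\{1,\hdots,m\}$. Since $t-d_K\leqslant d-k_i$, one has $p_j\in\mathbb{C}_{d-k_i}[z]$, whence $0\leqslant \vec{p}_j^{\,H}M_{d-k_i}(g_iy)\vec{p}_j = L_y(|p_j|^2 g_i)$ from the localization constraint; and as $|p_j|^2 g_i$ has bidegree at most $(t-d_K+k_i,\,t-d_K+k_i)\leqslant(t,t)$, this quantity equals $\int |p_j|^2 g_i\,d\mu = \sum_{k=1}^S\lambda_k|p_j(z(k))|^2 g_i(z(k)) = \lambda_j\,g_i(z(j))$. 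As $\lambda_j>0$, this forces $g_i(z(j))\geqslant 0$; ranging over $i$ gives $z(j)\in K$ for $j=1,\hdots,S$.

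Finally I would pin down the value. Since $f\in\mathbb{R}_k[\bar z,z]$ with $k\leqslant d^\text{min}\leqslant t$, the same bidegree argument gives $\rho_d = L_y(f) = \int f\,d\mu = \sum_{j=1}^S\lambda_j f(z(j)) \geqslant f^\text{opt}\sum_{j=1}^S\lambda_j = f^\text{opt}$, using $z(j)\in K$. As $\rho_d\leqslant\rho\leqslant f^\text{opt}$ by Proposition~\ref{prop:dualconverge}, this yields $\rho_d = f^\text{opt}$; then $\sum_{j=1}^S\lambda_j\big(f(z(j)) - f^\text{opt}\big) = 0$ with all $\lambda_j>0$ and all $f(z(j))\geqslant f^\text{opt}$ forces $f(z(j)) = f^\text{opt}$ for each $j$, so $z(1),\hdots,z(S)$ are $S$ distinct global solutions of \eqref{eq:complexPOP}.

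The one genuine obstacle is concealed inside Theorem~\ref{th:trunc}: in contrast with the real truncated moment problem, positivity and flatness of the complex moment matrix do not by themselves yield a representing measure, and condition~2 — which requires $n>1$ to be non-vacuous — provides precisely the extra positivity that closes the gap. Everything else above is degree bookkeeping, whose only delicate point is that the flatness is demanded across the $d_K$ levels from $t-d_K$ up to $t$, so that the interpolants $p_j$ have small enough degree to remain within reach of every localization matrix $M_{d-k_i}(g_iy)$ while still being evaluable against $\mu$.
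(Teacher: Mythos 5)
Your proof is correct and follows essentially the same route as the paper: apply Theorem~\ref{th:trunc} to the restriction of $y$ to $\mathcal{H}_t$ to obtain an $S$-atomic representing measure, sandwich $f^\text{opt}\geqslant\rho_d = L_y(f)=\sum_j\lambda_j f(z(j))\geqslant f^\text{opt}$, and conclude that each atom is a global minimizer. The only divergence is that you re-derive the feasibility $z(j)\in K$ via interpolation polynomials tested against the localization matrices $M_{d-k_i}(g_i y)$ — a valid but redundant step, since Theorem~\ref{th:trunc} already delivers a measure \emph{supported on $K$}, which is the form of the conclusion the paper's proof quotes directly.
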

\begin{proof}
Thanks to Theorem~\ref{th:trunc} below, $y \in \mathcal{H}_t$ can be represented by a measure $\mu$ on $K$ (i.e. $y_{\alpha,\beta} = \int_K \bar{z}^\alpha z^\beta d\mu, ~\forall |\alpha|,|\beta| \leqslant t$) and can thus be extended to $y \in \mathcal{H}$. The same theorem implies that $\mu = \sum_{j=1}^S \lambda_j \delta_{z(j)}$ for some $S$ different point $z(1),\hdots,z(S)$ in $K$ and some $\lambda_1,\hdots,\lambda_S > 0$. In addition, $y_{0,0} = \int_K \bar{z}^0 z^0 d\mu = \sum_{j=1}^S \lambda_j = 1$ and thus $f^\text{opt} \geqslant \rho_d = L_y(f) = \int_K f d\mu = \sum_{j=1}^S \lambda_j f(z(j)) \geqslant \sum_{j=1}^S \lambda_j f^\text{opt} = f^\text{opt}$. We simultaneously deduce that $ \rho_d = f^\text{opt} = f(z(1)) = \hdots = f(z(S))$. 
\end{proof}

In particular, if $S=1$ in Proposition \ref{prop:rank}, then Point 2 in Proposition~\ref{prop:rank} need not be checked for (see comment under \eqref{eq:uni}) and $y_{\alpha,\beta} = \int_K \bar{z}^\alpha z^\beta d\delta_z =  \bar{z}^\alpha z^\beta, ~\forall  |\alpha|,|\beta| \leqslant d^\text{min}$ for some $z \in K^\text{opt}$. A global solution can be read from $y$ because $z = (y_{0,\beta})_{|\beta|=1}$. 

\begin{example}
\label{ex:putinar}
\normalfont
Putinar and Scheiderer~\cite{putinar-scheiderer-2012} consider parameters $0 < a < \frac{1}{2}$ and $C>\frac{1}{1-2a}$, and problem
\begin{equation}
\label{eq:ex2}
\begin{array}{llcl}
 \inf_{z \in \mathbb{C}} & f(z) & := & C - |z|^2 \\
            \text{s.t.}          & g(z) & := & |z|^2 - a z^2 - a \bar{z}^2 - 1 = 0,
\end{array}
\end{equation}
whose set of global solutions is $K^\text{opt} = \left\{ \pm \frac{1}{\sqrt{1-2a}} \right\}$ and $f^\text{opt} = C-\frac{1}{1-2a}>0$. They prove that the decomposition of Theorem \ref{th:angelo} does not hold. Since the feasible set is included in the Euclidean ball of radius $\sqrt{C}$, we suggest solving
\begin{equation}
\label{eq:ex2sphere}
\begin{array}{llcl}
 \inf_{z_1,z_2 \in \mathbb{C}} & \hat{f}(z_1,z_2) & := & C - |z_1|^2 \\
            \text{s.t.}          & \hat{g}_1(z_1,z_2) & := & |z_1|^2 - a z_1^2 - a \bar{z}_1^2 - 1 = 0, \\
                                    & \hat{g}_2(z_1,z_2) & := & C - |z_1|^2 - |z_2|^2 = 0, \\
                                    & \hat{g}_3(z_1,z_2) & := & \textbf{i} z_2 - \textbf{i} \overline{z}_2 = 0, \\
                                    & \hat{g}_4(z_1,z_2) & := & z_2 + \overline{z}_2 \geqslant 0.
\end{array}
\end{equation}
Consider $a = \frac{1}{4}$ and $C = 3$ so that $f^\text{opt} = 1$. Notice that $d^\text{min} = 2$ for \eqref{eq:ex2} and \eqref{eq:ex2sphere}. The complex relaxations of orders $2\leqslant d \leqslant 3$ of \eqref{eq:ex2} are unbounded. The complex relaxation of order 2 of \eqref{eq:ex2sphere} yields the value 0.6813. That of order 3 yields 1.0000, $\text{rank} ~ M_3(y) = \text{rank} ~ M_1(y) = 2$, and Point 2 in Proposition~\ref{prop:rank}. Thus $f^\text{opt} \approx 1.000$ and there exists at least 2 global solutions to \eqref{eq:ex2sphere}, and hence to \eqref{eq:ex2}.
\end{example}

We next transpose~\cite[Lemma 3]{josz-2015} from real to complex numbers.
\begin{lemma}
\normalfont
\label{lemma:sphere}
Define $s(z):= R^2 - |z_1|^2 - \hdots - |z_n|^2$. Given $d \in \mathbb{N}^*$ and $y\in \mathcal{H}_d$, if $M_d(y) \succcurlyeq 0$ and $M_{d-1}(sy) = 0$, then $\text{Tr} (M_{d}(y)) \leqslant y_{0,0} \sum_{l=0}^d R^{2l}$.
\end{lemma}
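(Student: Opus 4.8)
The plan is to read the hypothesis $M_{d-1}(sy)=0$ off on the diagonal and convert it into a recursion relating the ``degree-$l$ pieces'' of the trace. Writing $s(z)=R^2-|z_1|^2-\hdots-|z_n|^2$, Remark~\ref{rem:Ly} gives, for all $|\alpha|,|\beta|\leqslant d-1$,
\begin{equation*}
M_{d-1}(sy)(\alpha,\beta)=R^2\,y_{\alpha,\beta}-\sum_{i=1}^n y_{\alpha+e_i,\beta+e_i},
\end{equation*}
where $e_i\in\mathbb{N}^n$ denotes the $i$-th unit vector; hence the hypothesis is equivalent to $R^2 y_{\alpha,\beta}=\sum_{i=1}^n y_{\alpha+e_i,\beta+e_i}$ for all $|\alpha|,|\beta|\leqslant d-1$, and only the case $\alpha=\beta$ is needed. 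Note that these relations only involve entries $y_{\alpha+e_i,\alpha+e_i}$ with $|\alpha+e_i|\leqslant d$, which are legitimate indices of $y\in\mathcal{H}_d$, so no extension of $y$ is required.

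Next I would set $t_l:=\sum_{|\alpha|=l} y_{\alpha,\alpha}$ for $0\leqslant l\leqslant d$, so that $\text{Tr}(M_d(y))=\sum_{l=0}^d t_l$ and $t_0=y_{0,0}$. Summing the diagonal identity over all $|\alpha|=l$ (for $0\leqslant l\leqslant d-1$) and reindexing the right-hand side by $\gamma:=\alpha+e_i$, each multi-index $\gamma$ with $|\gamma|=l+1$ is produced exactly once for every coordinate $i$ with $\gamma_i\geqslant1$, so
\begin{equation*}
R^2 t_l=\sum_{|\gamma|=l+1}\#\{\,i:\gamma_i\geqslant1\,\}\; y_{\gamma,\gamma}.
\end{equation*}
Since $|\gamma|=l+1\geqslant1$ forces $\#\{i:\gamma_i\geqslant1\}\geqslant1$, and since $M_d(y)\succcurlyeq0$ makes every diagonal entry $y_{\gamma,\gamma}$ with $|\gamma|\leqslant d$ nonnegative, the multiplicities may be dropped to obtain $R^2 t_l\geqslant t_{l+1}$. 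An immediate induction then gives $t_l\leqslant R^{2l}t_0=R^{2l}y_{0,0}$ for $0\leqslant l\leqslant d$ (using only $R^2\geqslant0$), and summing over $l$ yields $\text{Tr}(M_d(y))=\sum_{l=0}^d t_l\leqslant y_{0,0}\sum_{l=0}^d R^{2l}$, as claimed.

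There is no serious obstacle here; the two points that warrant care are the combinatorial bookkeeping in the reindexing step — ensuring each $\gamma$ is counted with exactly its number of nonzero coordinates — and the explicit appeal to positive semidefiniteness of $M_d(y)$ itself (not merely of $M_d(sy)$) to guarantee $y_{\gamma,\gamma}\geqslant0$, which is what makes discarding multiplicities $\geqslant1$ legitimate rather than harmful.
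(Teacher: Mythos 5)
Your proof is correct and follows essentially the same route as the paper: read the diagonal of $M_{d-1}(sy)=0$, reindex the shifted terms, use positive semidefiniteness of $M_d(y)$ to drop the multiplicities $\#\{i:\gamma_i\geqslant1\}\geqslant1$, and iterate. The only cosmetic difference is that you organize the recursion degree-by-degree ($t_{l+1}\leqslant R^2 t_l$), whereas the paper sums these into the cumulative inequality $\sum_{|\alpha|\leqslant l} y_{\alpha,\alpha}\leqslant y_{0,0}+R^2\sum_{|\alpha|\leqslant l-1} y_{\alpha,\alpha}$; the two are equivalent and yield the same bound.
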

\begin{proof}
Given $1 \leqslant l \leqslant d$, we have $\text{Tr}(M_{l-1}(sy)) = \sum_{|\alpha|\leqslant l-1}  M_{l-1}(sy)(\alpha,\alpha)  = \sum_{|\alpha|\leqslant l-1}  L_y(s(z) \bar{z}^\alpha z^\alpha) =  \sum_{|\alpha|\leqslant l-1} \sum_{|\gamma| \leqslant 1} s_{\gamma,\gamma}  y_{\gamma+\alpha,\gamma+\alpha} = \sum_{|\alpha|\leqslant l-1,|\gamma| = 0}  s_{\gamma,\gamma}  y_{\gamma+\alpha,\gamma+\alpha}$  $+ \sum_{|\alpha|\leqslant l-1,|\gamma| = 1}  s_{\gamma,\gamma}  y_{\gamma+\alpha,\gamma+\alpha} = \sum_{|\alpha|\leqslant l-1} R^2  y_{\alpha,\alpha} - \sum_{|\alpha|\leqslant l-1,|\gamma| = 1} y_{\gamma+\alpha,\gamma+\alpha}$.
We have $M_{d-1}(sy) = 0$ so $M_{l-1}(sy) = 0$ for all $1 \leqslant l \leqslant d$ and hence $ \text{Tr}(M_{l-1}(sy)) = 0 $. In addition, $\sum_{0<|\alpha|\leqslant l}  y_{\alpha,\alpha} \leqslant \sum_{|\alpha|\leqslant l-1,|\gamma| = 1} y_{\gamma+\alpha,\gamma+\alpha}$. Thus $\sum_{|\alpha|\leqslant l}  y_{\alpha,\alpha}  \leqslant y_{0,0} + R^2  \sum_{|\alpha|\leqslant l-1} y_{\alpha,\alpha}$ for $1 \leqslant l \leqslant d$,
which proves the lemma.
\end{proof}

\begin{theorem}[Putinar and Scheiderer~\cite{putinar-2013}]
\label{th:represent} 
Under assumption \eqref{eq:sa}, $y\in \mathcal{H}$ has a representing measure on $K$ if and only if $M_{d}(g_iy) \succcurlyeq 0,~i = 0,\hdots,m, \forall d \in \mathbb{N}$.
\end{theorem}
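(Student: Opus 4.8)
The plan is to prove the two implications separately. The ``only if'' direction is essentially the computation already carried out in Section~\ref{subsec:Complex Moment Hierarchy}, stripped of the normalization $y_{0,0}=1$: if $\mu\in\mathcal{M}_+(K)$ represents $y$, then for every $0\leqslant i\leqslant m$ (with $g_0:=1$), every $d\in\mathbb{N}$, and every $p\in\mathbb{C}_d[z]$, the same expansion gives $\vec{p}^{\,H}M_d(g_iy)\vec{p}=\int_K|p(z)|^2g_i(z)\,d\mu$, which is $\geqslant 0$ because $g_i\geqslant 0$ on $K$; hence $M_d(g_iy)\succcurlyeq 0$ for all $i$ and $d$.

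For the ``if'' direction, I would work with the Riesz functional $L_y:\mathbb{R}[\bar z,z]\to\mathbb{R}$ of Remark~\ref{rem:Ly} and show that it descends to a well-defined, positive, $\|\cdot\|_\infty$-continuous functional on the restriction algebra $R_K(\mathbb{R}[\bar z,z])$; the measure then comes from density and the Riesz Representation Theorem. The core is a three-step chain. First, if $\varphi\in\mathbb{R}[\bar z,z]$ satisfies $\varphi_{|K}>0$, then the sphere assumption~\eqref{eq:sa} lets us invoke Theorem~\ref{th:angelo} to write $\varphi=\sum_{i=0}^m\sigma_ig_i$ with $\sigma_i\in\Sigma[z]$; decomposing $\sigma_i=\sum_j|p^i_j|^2$ and using the hypothesis $M_{d_i}(g_iy)\succcurlyeq 0$ at the order $d_i:=\max_j\deg p^i_j$ gives $L_y(\varphi)=\sum_{i,j}(\vec p^i_j)^H M_{d_i}(g_iy)\vec p^i_j\geqslant 0$. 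Second, since $M_0(y)\succcurlyeq 0$ forces $y_{0,0}\geqslant 0$, applying the first step to $\varphi+\varepsilon$ and letting $\varepsilon\to 0^+$ upgrades this to $\varphi_{|K}\geqslant 0\Rightarrow L_y(\varphi)\geqslant 0$. Third, applying the second step to the two polynomials $\|\varphi_{|K}\|_\infty-\varphi$ and $\|\varphi_{|K}\|_\infty+\varphi$, both nonnegative on $K$, yields $|L_y(\varphi)|\leqslant y_{0,0}\,\|\varphi_{|K}\|_\infty$; in particular $\varphi_{|K}=0$ implies $L_y(\varphi)=0$, so $L_y$ induces a well-defined bounded functional on $R_K(\mathbb{R}[\bar z,z])$. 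Because $R_K(\mathbb{R}[\bar z,z])$ is dense in $C(K,\mathbb{R})$ by the Complex Stone--Weierstrass Theorem, this functional extends uniquely to a continuous $\widetilde L$ on $C(K,\mathbb{R})$, positive on $\mathcal{P}(K)$ by the second step and a density argument, and the Riesz Representation Theorem furnishes $\mu\in\mathcal{M}_+(K)$ with $\widetilde L(\psi)=\int_K\psi\,d\mu$. Writing $\bar z^\alpha z^\beta$ as its real part $\tfrac{1}{2}(\bar z^\alpha z^\beta+\bar z^\beta z^\alpha)$ plus $\textbf{i}$ times its imaginary part $\tfrac{\textbf{i}}{2}(\bar z^\beta z^\alpha-\bar z^\alpha z^\beta)$, both of which lie in $\mathbb{R}[\bar z,z]$, and using $\mathbb{C}$-linearity of $L_y$, one recovers $\int_K\bar z^\alpha z^\beta\,d\mu=L_y(\bar z^\alpha z^\beta)=y_{\alpha,\beta}$, so $\mu$ represents $y$.

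The step I expect to be the main obstacle is the third observation — that $L_y$ is $\|\cdot\|_\infty$-bounded on the restriction algebra, equivalently that $\varphi_{|K}=0\Rightarrow L_y(\varphi)=0$ — because it is precisely here that the positive semidefiniteness hypothesis must be used at \emph{every} order $d$: the sums of squares $\sigma_i$ produced by Theorem~\ref{th:angelo} carry no a~priori degree bound, so no single $M_d$ suffices. Lemma~\ref{lemma:sphere} supplies the complementary fact that $\mathrm{Tr}\,M_d(y)\leqslant y_{0,0}\sum_{l=0}^dR^{2l}$, obtained because both sphere inequalities force $M_d(sy)=0$; this bounds the moment sequence but is not needed for the existence of $\mu$. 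The degenerate case $y_{0,0}=0$, where the bound forces $L_y\equiv 0$ and $\mu=0$, and the harmless normalization difference between this statement and relaxation~\eqref{eq:momentd}, are dealt with along the way.
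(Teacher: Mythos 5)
Your proof is correct and takes essentially the same route as the paper (which follows Schweighöfer's template): the D'Angelo--Putinar Positivstellensatz forces $L_y(\varphi)\geqslant 0$ whenever $\varphi_{|K}\geqslant 0$, hence $L_y$ is $\|\cdot\|_\infty$-bounded and descends to the restriction algebra, after which Stone--Weierstrass density and the Riesz Representation Theorem furnish the representing measure. The only differences are cosmetic: you apply Theorem~\ref{th:angelo} directly to $\varphi+\varepsilon$ where the paper routes through Corollary~\ref{cor:convergence}; you work over $C(K,\mathbb{R})$ and recover the complex moments from real and imaginary parts while the paper works in $C(K,\mathbb{C})$ with an extra factor of~$2$; and your bound $|L_y(\varphi)|\leqslant y_{0,0}\|\varphi_{|K}\|_\infty$ already disposes of the degenerate case $y_{0,0}=0$ without appealing to Lemma~\ref{lemma:sphere}.
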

\begin{proof} We provide an alternative proof using Lemma~\ref{lemma:sphere}.
The ``only if'' part is a consequence of Section \ref{subsec:Complex Moment Hierarchy}.
Concerning the ``if'' part, if $y_{0,0} = 0$, then Lemma \ref{lemma:sphere} implies that $y = 0$ which can be represented by $\mu = 0$ on $K$. Otherwise $y_{0,0}>0$ and $y/y_{0,0}$ is a feasible point of problem \eqref{eq:momentinf} whose optimal value is $f^\text{opt}$ for all $f \in \mathbb{R}[\bar{z},z]$ according to Corollary \ref{cor:convergence}. If moreover $f_{|K} \geqslant 0$, then $L_{ y/y_{0,0} } (f) \geqslant f^\text{opt} \geqslant 0$. In particular, if $f_{|K} = 0$, then $L_{ y/y_{0,0} } (f) = 0$. We may therefore define $\tilde{L}_{ y/y_{0,0} } : R_K( \mathbb{C}[\bar{z},z] ) \longrightarrow \mathbb{C}$ such that $\tilde{L}_{ y/y_{0,0} } ( \varphi_{|K} ) :=  L_{ y/y_{0,0} } (\varphi)$ (similar to Schweigh\"ofer~\cite[Proof of Theorem 2]{schweighofer-2005}). If $\varphi \in R_K(\mathbb{R}[\bar{z},z])$, then $\tilde{L}_{ y/y_{0,0} }(\|\varphi\|_\infty - \varphi) \geqslant 0$ and $\tilde{L}_{ y/y_{0,0} }(\varphi) \leqslant \|\varphi\|_\infty$. Linearity implies that $|\tilde{L}_{ y/y_{0,0} }(\varphi)| \leqslant \|\varphi\|_\infty$. 
As a result, for all $\varphi \in R_K(\mathbb{C}[\bar{z},z])$, we have
$ |\tilde{L}_{ y/y_{0,0} }(\varphi)| = | \tilde{L}_{ y/y_{0,0} }(\text{Re}(\varphi) + \textbf{i} \text{Im}(\varphi)) | = | \tilde{L}_{ y/y_{0,0} }(\text{Re}(\varphi)) + \textbf{i} \tilde{L}_{ y/y_{0,0} }(\text{Im}(\varphi)) | \leqslant | \tilde{L}_{ y/y_{0,0} }(\text{Re}(\varphi)) | + | \tilde{L}_{ y/y_{0,0} }(\text{Im}(\varphi)) | \leqslant \| \text{Re}(\varphi) \|_\infty + \| \text{Im}(\varphi) \|_\infty 
\leqslant 2 \| \varphi \|_\infty
$.
Moreover, $R_K(\mathbb{C}[\bar{z},z])$ is dense in $C(K,\mathbb{C})$. Therefore $\tilde{L}_{y/y_{0,0}}$ may be extended to a continous linear functional on $C(K,\mathbb{C})$ (we preserve the same name for the extension). $K$ is compact thus the Riesz Representation Theorem implies that there exists a unique Radon measure $\mu$ such that
$\tilde{L}_{y/y_{0,0}}(\varphi) = \int_K \varphi d\mu$ for all $\varphi \in C(K,\mathbb{C})$ and $\mu \geqslant 0$ because $\varphi \in \mathcal{P}(K)$ implies that $\tilde{L}_{ y/y_{0,0} }(\varphi) \geqslant 0$ (density argument). Finally, if $\alpha,\beta \in \mathbb{N}^n$, $y_{\alpha,\beta} / y_{0,0}  = L_{y/y_{0,0}} ( \bar{z}^\alpha z^\beta )$ (Remark \ref{rem:Ly}) so $y$ has representing measure $y_{0,0}\mu$ on $K$.
\end{proof}
\begin{theorem}
\label{th:trunc}
\normalfont
\textit{
Let $n>1$ and $y \in \mathcal{H}_d$ with $d \geqslant d_K = \max_{1 \leqslant i \leqslant m} k_i$} (\textit{$k_i$ is defined above~\eqref{eq:complexPOP}})\textit{. Assume that $K$ contains the constraints $|z_k|^2 \leqq R_k^2, ~ k=1\hdots n$, for some radii $R_k \geqslant 0$ or the constraint $\sum_{k=1}^n |z_k|^2 \leqq R^2$ for some radius $R \geqslant 0$} (\textit{where $\leqq$ is an equality or an inequality}).
\textit{Then there exists a positive} $ \text{rank} M_{d-d_K}(y)$\textit{-atomic measure} $\mu$ \textit{supported on} $K$ \textit{such that:}
\begin{equation}
\label{eq:atomic_rep}
 y_{\alpha,\beta} = \int_{\mathbb{C}^n} \bar{z}^\alpha z^\beta d\mu ~, ~~~~ \text{for all} ~~ |\alpha|,|\beta| \leqslant d
\end{equation}
\textit{if and only if:}
\begin{enumerate}
\item $M_d(y) \succcurlyeq 0$ and $M_{d-k_i}(g_iy) \succcurlyeq 0, ~ i = 1 \hdots m$;
\item $\text{rank} M_{d}(y) = \text{rank} M_{d-d_K}(y)$;
\item $ 
\begin{pmatrix} 
M_{d-d_K}(y) & M_{d-d_K}(\bar{z}_i y) & M_{d-d_K}(\bar{z}_j y) \\ 
M_{d-d_K}(z_i y) & M_{d-d_K}(|z_i|^2 y)  & M_{d-d_K}(\bar{z}_j z_i y) \\
M_{d-d_K}(z_j y) & M_{d-d_K}(\bar{z}_i z_j y) & M_{d-d_K}(|z_j|^2 y)
\end{pmatrix} 
\succcurlyeq 0 , ~ \forall 1 \leqslant i < j \leqslant n$.
\end{enumerate}
\textit{Moreover, for each $1\leqslant i \leqslant m$, the measure $\mu$ has exactly} $\text{rank} M_{d}(y) - \text{rank} M_{d-d_K}(g_i y)$ \textit{atoms that are zeros of $g_i$.}
\end{theorem}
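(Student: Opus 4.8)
\emph{The ``only if'' direction.} Suppose $\mu=\sum_{l=1}^{S}\lambda_l\delta_{z(l)}$, with $S=\mathrm{rank}\,M_{d-d_K}(y)$, the $z(l)\in K$ distinct and $\lambda_l>0$, represents $y$ up to degree $d$. For $p\in\mathbb{C}_d[z]$ one has $\vec{p}^{\,H}M_d(y)\vec{p}=L_y(|p|^2)=\int_K|p|^2\,d\mu\geqslant 0$, and likewise $\vec{p}^{\,H}M_{d-k_i}(g_iy)\vec{p}=\int_K|p|^2 g_i\,d\mu\geqslant 0$ because $g_i\geqslant 0$ on $K\supseteq\mathrm{supp}\,\mu$; this is Point~1. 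Expanding the modulus shows that the $3\times 3$ block matrix of Point~3, tested against $(\vec{p}_0,\vec{p}_1,\vec{p}_2)$, equals $L_y\bigl(|p_0+\bar{z}_ip_1+\bar{z}_jp_2|^2\bigr)=\sum_l\lambda_l\,|p_0(z(l))+\overline{z_i(l)}\,p_1(z(l))+\overline{z_j(l)}\,p_2(z(l))|^2\geqslant 0$, which is Point~3. Finally $M_{d-d_K}(y)$ is a principal submatrix of every $M_t(y)=\sum_l\lambda_l v_l^{(t)}(v_l^{(t)})^{H}$, $v_l^{(t)}:=(z(l)^\alpha)_{|\alpha|\leqslant t}$, with $d-d_K\leqslant t\leqslant d$, so all these ranks coincide with $S$ (Point~2), which also forces $v_1^{(t)},\dots,v_S^{(t)}$ to be linearly independent; hence $M_{d-k_i}(g_iy)=\sum_l\lambda_l g_i(z(l))v_l^{(d-k_i)}(v_l^{(d-k_i)})^{H}$ has rank $\#\{l:g_i(z(l))\neq 0\}$, which is the asserted number of atoms off $\{g_i=0\}$.

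\emph{The ``if'' direction: reduction to a normal tuple.} Assume Points 1--3 and $n>1$. The ball (or coordinate--ball) constraints have degree one, so $d_K\geqslant 1$; put $t_0:=d-d_K<d$ and $S:=\mathrm{rank}\,M_{t_0}(y)$. Since rank is monotone under passage to principal submatrices, Point~2 forces $\mathrm{rank}\,M_t(y)=S$ for all $t_0\leqslant t\leqslant d$, so each $M_{t+1}(y)$ is a flat extension of $M_t(y)$ on this range. Work on the $S$--dimensional inner product space $\mathcal{V}:=\mathbb{C}_{t_0}[z]/\ker M_{t_0}(y)$ with the form induced by $M_{t_0}(y)\succcurlyeq 0$, writing $[\,\cdot\,]$ for the class map. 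Point~3 is the assertion
\[
L_y\bigl(|p_0+\bar{z}_i p_1+\bar{z}_j p_2|^2\bigr)\ \geqslant\ 0\qquad\text{for all }p_0,p_1,p_2\in\mathbb{C}_{t_0}[z]\text{ and all }1\leqslant i<j\leqslant n
\]
(this is the expansion of the block matrix), and in particular its $2\times 2$ principal sub--block for each variable $z_i$ is positive semidefinite. Combining that sub--block with the flatness of $M_{t_0+1}(y)$, one checks that $p\in\ker M_{t_0}(y)$ implies $z_ip\in\ker M_{t_0+1}(y)$, so the multiplication maps $\mathsf{M}_i:[p]\mapsto[z_ip]$ are well defined on $\mathcal{V}$; they commute pairwise because $\mathbb{C}[z]$ is commutative, and their adjoints are characterized by $\langle\mathsf{M}_i^{*}[p],[q]\rangle=L_y(\overline{p}\,z_i q)$.

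\emph{The ``if'' direction: spectral theorem and support.} The key claim is that Points~2--3 make $(\mathsf{M}_1,\dots,\mathsf{M}_n)$ a commuting \emph{normal} tuple. Using the displayed positivity together with flatness to lower the $\bar{z}$-- and $z$--degrees that arise back into range, one first upgrades $M_{t_0+1}(y)$ to an enlarged, positive semidefinite and still flat moment array carrying the extra (anti--holomorphic degree one) entries, then iterates flat extensions to obtain $\tilde y\in\mathcal{H}$ with $M_D(\tilde y)\succcurlyeq 0$ and $\mathrm{rank}\,M_D(\tilde y)=S$ for all $D$; on $\mathcal{V}$ this forces $\|\mathsf{M}_i^{*}v\|=\|\mathsf{M}_i v\|$ and (by Fuglede's theorem, or directly from the $3\times 3$ blocks) $\mathsf{M}_i\mathsf{M}_j^{*}=\mathsf{M}_j^{*}\mathsf{M}_i$. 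The finite--dimensional joint spectral theorem for a commuting normal tuple with the cyclic vector $[1]$ then yields $S$ distinct joint eigenvalues $z(1),\dots,z(S)\in\mathbb{C}^n$ and positive weights $\lambda_l$ with $y_{\alpha,\beta}=\sum_{l=1}^{S}\lambda_l\,\overline{z(l)}^{\,\alpha}z(l)^{\beta}$ for $|\alpha|,|\beta|\leqslant t_0$; set $\mu:=\sum_l\lambda_l\delta_{z(l)}$. Flatness gives, for $t_0\leqslant t<d$, relations $z^\beta\equiv p_\beta(z)\ (\mathrm{mod}\ \ker M_{t+1}(y))$ with $p_\beta\in\mathbb{C}_{t}[z]$; these pass to the atoms, so $\int\bar{z}^\alpha z^\beta\,d\mu$ can be rewritten as $\int\bar{z}^\alpha p_\beta\,d\mu=y_{\alpha,\beta}$, and an induction (using also $y_{\alpha,\beta}=\overline{y_{\beta,\alpha}}$) extends \eqref{eq:atomic_rep} to all $|\alpha|,|\beta|\leqslant d$. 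Finally the atoms lie in $K$: for each $i$, $M_{d-k_i}(g_iy)=\sum_l\lambda_l g_i(z(l))\,v_l(v_l)^{H}\succcurlyeq 0$ by Point~1, with $v_l:=(z(l)^\alpha)_{|\alpha|\leqslant d-k_i}$ linearly independent since $\mathrm{rank}\,M_{d-k_i}(y)=S$; hence $g_i(z(l))\geqslant 0$ for all $l$, and the rank of this matrix, $\#\{l:g_i(z(l))\neq 0\}=\mathrm{rank}\,M_{d-d_K}(g_iy)$, yields the atom count $\mathrm{rank}\,M_d(y)-\mathrm{rank}\,M_{d-d_K}(g_iy)$ on $\{g_i=0\}$.

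\emph{Main obstacle.} The heart of the proof is deducing normality of the multiplication tuple from Points~2--3, equivalently, promoting the flat holomorphic Gram matrix $M_d(y)$ to a flat, positive semidefinite ``full'' bidegree moment array. For a single complex variable flatness alone already suffices (essentially Curto and Fialkow's flat--data theorem), but for $n>1$ it does not, because the mixed operators $\mathsf{M}_i\mathsf{M}_j^{*}$ need not commute; Point~3 supplies exactly the missing positivity, and the non-trivial content is that checking it only on pairs $(z_i,z_j)$ is enough, since the obstructions to consistency of the enlarged array are two--variable phenomena. Making the identification of $\mathsf{M}_i^{*}$ with ``multiplication by $\bar z_i$'' rigorous inside the flat quotient, and carrying the flat extension through to all orders, is the delicate step. (When the ball constraint is an equality one may alternatively invoke the Putinar--Scheiderer representation theorem in place of the spectral theorem.) A secondary technicality is the linear independence of the Vandermonde vectors $(z(l)^\alpha)_{|\alpha|\leqslant t}$ used both to confine the atoms to $K$ and to count those on each $\{g_i=0\}$; this is where the hypothesis $n>1$ is used.
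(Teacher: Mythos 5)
Your overall strategy matches the paper's: build the inner-product space $V$ from a Cholesky factorization of $M_d(y)$, define shift operators $T_k$ on $V$, prove $(T_1^*,\ldots,T_n^*,T_1,\ldots,T_n)$ is a commuting tuple, and apply the finite-dimensional joint spectral theorem with the cyclic vector $x_0$ to produce the atomic representation. Your ``only if'' direction is correct and coincides with the paper's. Your well-definedness argument for the shifts (via the $2\times 2$ principal sub-block of Point~3 together with flatness, which forces $\sum p_\alpha x_{\alpha+e_i}$ to lie in $V$ and be orthogonal to $V$) is a valid alternative to the paper's route, which instead uses the ball constraints of Point~1 to show $\|T_k u\|\leqslant R_k\|u\|$ directly. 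Both are fine here; yours trades boundedness for a range/flatness argument, but in finite dimension this is immaterial.

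The genuine gap is precisely the step you yourself flag as the ``heart of the proof'': establishing normality of the shift tuple. You propose to obtain self-normality $T_i^*T_i=T_iT_i^*$ by ``iterating flat extensions to obtain $\tilde y$ with $M_D(\tilde y)\succcurlyeq 0$ and $\mathrm{rank}\,M_D(\tilde y)=S$ for all $D$.'' That argument is circular: producing a full, positive semidefinite, rank-$S$ complex moment array for all $D$ (with the constraint localizers still positive semidefinite) is essentially equivalent to producing the representing measure you are trying to build, and the obstruction to such an extension is exactly what normality of the shifts encodes. The ``Fuglede, or directly from the $3\times 3$ blocks'' aside gestures at a fix but never supplies the argument. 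The paper's proof does supply it, and it is short: Point~3, read as the positive semidefiniteness of the operator block matrix $\begin{pmatrix} I & T_i^* & T_j^*\\ T_i & T_i^*T_i & T_j^*T_i\\ T_j & T_i^*T_j & T_j^*T_j\end{pmatrix}$, has Schur complement $\begin{pmatrix}T_i^*T_i-T_iT_i^* & T_j^*T_i-T_iT_j^*\\ T_i^*T_j-T_jT_i^* & T_j^*T_j-T_jT_j^*\end{pmatrix}\succcurlyeq 0$; the diagonal blocks are trace zero, hence vanish, so the off-diagonal blocks vanish too. That one computation simultaneously gives $T_iT_i^*=T_i^*T_i$ and $T_i^*T_j=T_jT_i^*$ and renders the flat-extension machinery unnecessary. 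You should replace the flat-extension sketch with this Schur complement and trace argument; without it, the proof does not close.

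A secondary remark: once the atoms $\lambda_j$ and weights are produced, the paper recovers $y_{\alpha,\beta}$ for all $|\alpha|,|\beta|\leqslant d$ directly via $y_{\alpha,\beta}=\langle T^\alpha x_0, T^\beta x_0\rangle$, rather than via the degree-lowering relations you invoke; the latter is not wrong but is unnecessary bookkeeping. Your confinement of atoms to $K$ and the atom-count on $\{g_i=0\}$ via $\mathrm{rank}\,M_{d-d_K}(g_iy)$ match the paper.
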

\begin{proof}
($\Longleftarrow$) Point 1 implies that $(y_{\alpha,\beta})_{|\alpha|,|\beta|\leqslant d} \succcurlyeq 0$. Thus there exists a complex matrix $x$ of the same size as $(y_{\alpha,\beta})_{|\alpha|,|\beta|\leqslant d}$ such that we have the Cholesky factorization $(y_{\alpha,\beta})_{|\alpha|,|\beta|\leqslant d} = x^H x$. Let $(x_\alpha)_{|\alpha| \leqslant d}$ denote the columns of $x$. Also, let $\mathcal{C}_d$ denote the column space and consider the inner product $\langle u , v \rangle_{\mathcal{C}_d} := u^H v$ and its induced norm $\| . \|_{\mathcal{C}_d}$. We have $y_{\alpha,\beta} = \langle x_\alpha , x_\beta \rangle_{\mathcal{C}_d}$ for all $|\alpha|,|\beta|\leqslant d$. Let $V := \text{span} (x_\alpha)_{|\alpha| \leqslant d} \subset \mathcal{C}_d$. Point 2 implies that $V = \text{span} (x_\alpha)_{|\alpha| \leqslant d-1}$. Given $1\leqslant k \leqslant n$, define the $\mathbb{C}$-linear operator $T_k : V \longrightarrow V$ such that $T_k x_\alpha = x_{\alpha+e_k}$ for all $|\alpha| \leqslant d-1$ where $e_k$ is the row vector of size $n$ that contains only zeros apart from 1 in position $k$. This shift operator is well defined because each element of $V$ has a unique image by $T_k$. Indeed, consider some complex numbers $(u_\alpha)_{|\alpha| \leqslant d-1}$. The assumption on $K$ in the case of multiple constraints and Point 1 imply that $M_{d-1}[(R_k^2-|z_k|^2)y] \succcurlyeq 0$. Thus
$ \| \sum_{|\alpha| \leqslant d-1} u_\alpha x_{\alpha+e_k} \|_{\mathcal{C}_d}^2 = \sum_{|\alpha|,|\beta|\leqslant d-1} \langle x_{\alpha+e_k} , x_{\beta+e_k} \rangle_{\mathcal{C}_d} \bar{u}_\alpha u_\beta = \sum_{|\alpha|,|\beta|\leqslant d-1} y_{\alpha+e_k,\beta+e_k}  \bar{u}_\alpha u_\beta \leqslant R_k^2 \sum_{|\alpha|,|\beta|\leqslant d-1} y_{\alpha,\beta} \bar{u}_\alpha u_\beta = R_k^2 \sum_{|\alpha|,|\beta|\leqslant d-1} \langle x_\alpha , x_\beta \rangle_{\mathcal{C}_d}$\\$ \bar{u}_\alpha u_\beta = R_k^2 \| \sum_{|\alpha| \leqslant d-1} u_\alpha x_{\alpha} \|_{\mathcal{C}_d}^2$. Thus $T_k$ is well-defined and bounded by $R_k$. The assumption on $K$ in the case of a single constraint and Point 1 imply that $M_{d-1}[(R^2-\sum_{j=1}^n |z_j|^2)y] \succcurlyeq 0$. Thus $\|\sum_{|\alpha| \leqslant d-1} u_\alpha x_{\alpha+e_k}\|_{\mathcal{C}_d}^2 \leqslant \sum_{j=1}^n ~ \|\sum_{|\alpha| \leqslant d-1} u_\alpha x_{\alpha+e_j}\|_{\mathcal{C}_d}^2 \leqslant R^2 \|\sum_{|\alpha| \leqslant d-1} u_\alpha x_{\alpha}\|_{\mathcal{C}_d}^2$. Hence $T_k$ is well-defined and bounded by $R$.

Clearly, $(T_1,\hdots,T_n)$ is a pair-wise commuting tuple of operators on $V$. Let's now prove that $(T_1^*, \hdots T_n^*,T_1, \hdots, T_n)$ is a pair-wise commuting tuple of operators, which reduces to showing that $T_i^*T_j-T_jT_i^*=0$ for all $1 \leqslant i \leqslant j \leqslant n$ (where $(\cdot)^*$ stands for adjoint). To do so, consider $1 \leqslant i < j \leqslant n$ and $u,v,w \in V$. Point 2 implies that $V = \text{vec} (x_\alpha)_{|\alpha| \leqslant d-d_K}$. Thus there exists some complex numbers $(u_\alpha)_{|\alpha|\leqslant d-d_K}$, $(v_\alpha)_{|\alpha|\leqslant d-d_K}$, and $(w_\alpha)_{|\alpha|\leqslant d-d_K}$ such that $u = \sum_{|\alpha|\leqslant d-d_K} u_\alpha x_\alpha$, $v = \sum_{|\alpha|\leqslant d-d_K} v_\alpha x_\alpha$ and $w = \sum_{|\alpha|\leqslant d-d_K} w_\alpha x_\alpha$. Given $k \in \mathbb{N}$ and $\varphi \in \mathbb{C}_{k}[\bar{z},z]$, notice that $\langle u , \varphi (T)v \rangle_{\mathcal{C}_d} = \sum_{|\gamma|,|\delta|\leqslant k} \varphi_{\gamma,\delta} \langle T^\gamma u , T^\delta v \rangle_{\mathcal{C}_d} 
= \sum_{|\alpha|,|\beta| \leqslant d-d_K} \sum_{|\gamma|,|\delta|\leqslant k} \varphi_{\gamma,\delta} \langle T^\gamma x_\alpha , T^\delta x_\beta \rangle_{\mathcal{C}_d} \bar{u}_\alpha v_\beta =$\\$ \sum_{|\alpha|,|\beta| \leqslant d-d_K} \sum_{|\gamma|,|\delta|\leqslant k} \varphi_{\gamma,\delta} \langle x_{\alpha+\gamma} , x_{\beta+\delta} \rangle_{\mathcal{C}_d} \bar{u}_\alpha v_\beta= \hdots$\\$ \sum_{|\alpha|,|\beta| \leqslant d-d_K} ( \sum_{|\gamma|,|\delta|\leqslant k} \varphi_{\gamma,\delta} y_{\alpha+\gamma,\beta+\delta} ) \bar{u}_\alpha v_\beta = \vec{u}^H M_{d-d_K}(\varphi y) \vec{v}.$ 
As a result,
\begin{equation}
\left<
\begin{pmatrix}
u \\
v \\
w
\end{pmatrix},
\begin{pmatrix}
I        & T_i^* & T_j^* \\
T_i    & T_i^* T_i  & T_j^* T_i  \\
T_j    & T_i^* T_j  & T_j^* T_j
\end{pmatrix} 
\begin{pmatrix}
u \\
v \\
w
\end{pmatrix}
\right>_{ \mathcal{C}_d \times \mathcal{C}_d \times \mathcal{C}_d}= \hdots
\end{equation}
\begin{equation}
\begin{pmatrix}
\vec{u} \\
\vec{v} \\
\vec{w}
\end{pmatrix}^H
\begin{pmatrix} 
M_{d-d_K}(y) & M_{d-d_K}(\bar{z}_i y) & M_{d-d_K}(\bar{z}_j y) \\ 
M_{d-d_K}(z_i y) & M_{d-d_K}(|z_i|^2 y)  & M_{d-d_K}(\bar{z}_j z_i y) \\
M_{d-d_K}(z_j y) & M_{d-d_K}(\bar{z}_i z_j y) & M_{d-d_K}(|z_j|^2 y)
\end{pmatrix}
\begin{pmatrix}
\vec{u} \\
\vec{v} \\
\vec{w}
\end{pmatrix}
\end{equation}
Point 3 implies that 
\begin{equation}
\begin{pmatrix}
I        & T_i^* & T_j^* \\
T_i    & T_i^* T_i  & T_j^* T_i  \\
T_j    & T_i^* T_j  & T_j^* T_j
\end{pmatrix}  \succcurlyeq 0
\end{equation}
which is equivalent to the fact that Schur complement satisfies
\begin{equation}
\label{eq:schur}
\begin{pmatrix}
T_i^* T_i  & T_j^* T_i  \\
T_i^* T_j  & T_j^* T_j
\end{pmatrix} 
-
\begin{pmatrix}
T_i T_i^*  & T_i T_j^* \\
T_j T_i^*  & T_j T_j^*
\end{pmatrix}
\succcurlyeq 0.
\end{equation}
Thus $T_i^*T_i-T_iT_i^* \succcurlyeq 0$ and $T_j^*T_j-T_jT_j^* \succcurlyeq 0$. Since their trace is zero, we in fact have that $T_i^*T_i-T_iT_i^* = 0$ and $T_j^*T_j-T_jT_j^* = 0$. Going back to the Schur complement \eqref{eq:schur}, we thus have $T_i^* T_j - T_j T_i^* = 0$.

Having proven that $(T_1^*, \hdots T_n^*,T_1, \hdots, T_n)$ is a pair-wise commuting tuple of operators, it follows that they are commonly diagonizable. In other words, there exists orthogonal projectors $E_1, \hdots, E_p$ of $V$ such that $E_i E_j = 0$ for all $1 \leqslant i \neq j \leqslant p$ and there exists some complex numbers $( \lambda_{k,j})_{1 \leqslant k \leqslant n}^{1 \leqslant j \leqslant p}$ such that $ T_k = \sum_{j=1}^{p} \lambda_{k,j} E_j $ for all $1 \leqslant k \leqslant n$ (and thus $T_k^* = \sum_{j=1}^{p} \overline{\lambda_{k,j}} E_j$). For all $|\alpha|,|\beta| \leqslant d$, we thus have $y_{\alpha,\beta} = \langle x_\alpha , x_\beta \rangle_{\mathcal{C}_d} = \langle T^\alpha x_0 , T^\beta x_0 \rangle_{\mathcal{C}_d} = \langle x_0 , (T^*)^\alpha T^\beta x_0 \rangle_{\mathcal{C}_d} = \langle x_0 , \sum_{j=1}^p \bar{\lambda}_j^\alpha \lambda_j^\beta E_j x_0 \rangle_{\mathcal{C}_d} =$\\ $\sum_{j=1}^p \bar{\lambda}_j^\alpha \lambda_j^\beta \langle x_0 , E_j x_0 \rangle_{\mathcal{C}_d}$. Naturally, the number of projectors satisfies $p \leqslant \text{dim}(V) = \text{rank}M_d(y)$. Conversly, $ \text{rank}M_d(y) = \text{rank}(y_{\alpha,\beta})_{|\alpha|,|\beta|\leqslant d} \leqslant p$. Hence $p = \text{rank}M_d(y)$, the elements $\lambda_1, \hdots , \lambda_p$ are all distinct, and $\langle x_0 , E_j x_0 \rangle_{\mathcal{C}_d} > 0$ for all $1 \leqslant j \leqslant p$. Thus $\mu:=\sum_{j=1}^p  \langle x_0 , E_j x_0 \rangle_{\mathcal{C}_d} \delta_{\lambda_j}$ is a positive $\text{rank} M_{d-d_K}(y)$-atomic measure that satisfies \eqref{eq:atomic_rep}. Given $1\leqslant j \leqslant p$, let's show that $\lambda_j := (\lambda_{k,j})_{1\leqslant k \leqslant n} \in K$. There exists $u \in V \setminus \{0\}$ such that $T_k u = \lambda_{k,j} u$ for all $1 \leqslant k \leqslant n$. Normality implies that $T_k^* u = \overline{\lambda_{k,j}} u$. Hence $(T^*)^\alpha T^\beta u = \bar{\lambda}_j^\alpha \lambda_j^\beta  u $ for all $\alpha, \beta \in \mathbb{N}$. As a result, $ g_i(\lambda_j) \|u\|_{\mathcal{C}_d}^2 =  \langle u , g_i(\lambda_j) u \rangle_{\mathcal{C}_d} = \langle u , g_i(T) u \rangle_{\mathcal{C}_d} = \vec{u}^H M_{d-d_K}(g_iy) \vec{u} \geqslant 0 $. Thus $ \lambda_j \in K$.

$(\Longrightarrow)$ Let $p:= \text{rank} M_{d-d_K}(y)$, and let $(\lambda_j)_{1\leqslant j \leqslant p}$ and $(m_{j})_{1\leqslant j \leqslant p}$ denote the distinct atoms and their positive weights respectively. Let $x_\alpha := ( \sqrt{m_{j}} \lambda_j^\alpha )_{1\leqslant j \leqslant p} \in \mathbb{C}^p$ for all $|\alpha|\leqslant d$ and $V := \text{vec}(x_\alpha)_{|\alpha|\leqslant d}$. With these notations, we have $y_{\alpha,\beta} = \langle x_\alpha , x_\beta \rangle_{\mathcal{C}_d}$ for all $|\alpha|,|\beta| \leqslant d$. Notice that $p = \text{rank} M_{d-d_K}(y) \leqslant \text{rank} M_{d}(y) = \text{dim} V \leqslant p$, thus Point 2 holds. Given $1\leqslant k \leqslant n$, let $T_k := \text{diag}(\lambda_{k,1}, \hdots, \lambda_{k,p})$. It satisfies the shift property $T_k x_\alpha = x_{\alpha + e_k}$ for all $|\alpha|\leqslant d-1$ and $T_k^* = \text{diag}(\bar{\lambda}_{k,1}, \hdots, \bar{\lambda}_{k,p})$. Moreover, the shifts and their adjoints are pair-wise commuting so \eqref{eq:schur} holds and thus Point 3 does too. Let's now prove Point 1. Consider $1\leqslant i \leqslant m$ and some complex numbers $(u_{\alpha})_{|\alpha|\leqslant d-k_i} =: \vec{u}$. Let $u:= \sum_{|\alpha|\leqslant d-k_i} u_\alpha x_\alpha$ and $u=:(u_j)_{1 \leqslant j \leqslant p} \in \mathbb{C}^p$. We have $\vec{u}^H M_{d-k_i}(g_iy) \vec{u} = \langle u , g_i(T)u \rangle_{\mathcal{C}_d} = \langle u , [g_i(\lambda_j)u_j]_{j=1}^p \rangle_{\mathcal{C}_d} = \sum_{j=1}^p g_i(\lambda_j) |u_j|^2 \geqslant 0$. Hence $\text{dim} ~ \text{Ker} ~ g_i(T)$ ($=p - \text{rank} ~g_i(T)$ due to the rank-nullity theorem) is equal to number of atoms that are zeros of $g_i$. To conclude, notice that $\text{rank} ~ g_i(T) = \text{rank} ~ (\langle x_{\alpha},g_i(T)x_{\beta} \rangle_{\mathcal{C}_d})_{|\alpha|,|\beta| \leqslant d-d_K} = \text{rank} M_{d-d_K} (g_iy)$.
\end{proof}

In the univariate case $n=1$, Theorem~\ref{th:trunc} holds when Point 3 is replaced by
\begin{equation}
\label{eq:uni}
\begin{pmatrix} 
M_{d-d_K}(y) & M_{d-d_K}(\bar{z} y) \\ 
M_{d-d_K}(z y) & M_{d-d_K}(|z|^2 y)
\end{pmatrix} 
\succcurlyeq 0.
\end{equation}
In Theorem~\ref{th:trunc}, if we assume that $y_{0,0} > 0$, then Point 2 and Point 3 may be replaced by rank$M_{d}(y) = 1$. Indeed, in that case, the shift operators act on a one dimensional space, so they and their adjoints must commute pair-wise. For previous work on the link between linear functionals that are nonnegative on a quadratic module and bounded operators that admit a cyclic vector, see~\cite{putinar-1993} and~\cite[Theorem 2.3]{curto-2010}.

\begin{corollary}
\label{cor:trunc}
\normalfont
\textit{
Let $y \in \mathcal{H}_d$ be a Hankel matrix (i.e. $y_{\alpha,\beta} = y_{\gamma,\delta}$ for all $|\alpha|,|\beta|,|\gamma|,|\delta| \leqslant d$ such that $\alpha+\beta=\gamma+\delta$).}
\textit{Then there exists a positive} $ \text{rank} M_{d-d_K}(y)$\textit{-atomic measure} $\mu$ \textit{supported on} $K$ \textit{such that:}
\begin{equation}
 y_{\alpha,\beta} = \int_{\mathbb{C}^n} \bar{z}^\alpha z^\beta d\mu ~, ~~~~ \text{for all} ~~ |\alpha|,|\beta| \leqslant d
\end{equation}
\textit{if and only if:}
\begin{enumerate}
\item $M_d(y) \succcurlyeq 0$ and $M_{d-d_K}(g_iy) \succcurlyeq 0, ~ i = 1 \hdots m$;
\item $\text{rank} M_{d}(y) = \text{rank} M_{d-d_K}(y)$.
\end{enumerate}
\textit{Moreover, for each $1\leqslant i \leqslant m$, the measure $\mu$ has exactly} $\text{rank} M_{d}(y) - \text{rank} M_{d-d_K}(g_i y)$ \textit{atoms that are zeros of $g_i$.}
\end{corollary}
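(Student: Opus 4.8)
The plan is to obtain Corollary~\ref{cor:trunc} as the specialization of Theorem~\ref{th:trunc} to Hankel sequences, where the complex moment structure collapses to the classical real one recovered by Curto and Fialkow. Two observations drive everything. First, a Hankel $y\in\mathcal{H}_d$ has real entries only: taking $\gamma=\beta$, $\delta=\alpha$ in the Hankel relation gives $y_{\beta,\alpha}=y_{\alpha,\beta}$, which with $\overline{y_{\alpha,\beta}}=y_{\beta,\alpha}$ yields $\overline{y_{\alpha,\beta}}=y_{\alpha,\beta}$. Second --- and this is the point --- if one runs the construction in the proof of Theorem~\ref{th:trunc} for a Hankel $y$, the shift operators $T_k$ become self-adjoint, since $y_{\alpha+e_k,\beta}=y_{\alpha,\beta+e_k}$ gives $\langle x_{\alpha+e_k},x_\beta\rangle_{\mathcal{C}_d}=\langle x_\alpha,x_{\beta+e_k}\rangle_{\mathcal{C}_d}$, hence $\langle T_ku,v\rangle_{\mathcal{C}_d}=\langle u,T_kv\rangle_{\mathcal{C}_d}$ on $V$. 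From self-adjointness the requirement $T_i^*T_j-T_jT_i^*=0$ holds trivially, so Point~3 of Theorem~\ref{th:trunc} is automatic for Hankel $y$ (equivalently, the $3\times3$ block matrix there is the Gram matrix of the rows $(I,T_i,T_j)$, and is also seen to be positive semidefinite as a compression of $M_{d-d_K+1}(y)\succcurlyeq0$, using $d_K\geqslant1$); this explains why Point~3 is absent from the corollary and why the corollary holds for all $n\geqslant1$ rather than only $n>1$. Self-adjointness also lets us drop the ball/box constraint on $K$ required in Theorem~\ref{th:trunc}: well-definedness of $T_k$ will be extracted from the rank condition (Point~2) alone, and only the order-$(d-d_K)$ localizing matrices of Point~1 will be used.

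For the implication $(\Longleftarrow)$, assume Points~1 and~2. Take a Cholesky factor of $M_d(y)\succcurlyeq0$, let $(x_\alpha)_{|\alpha|\leqslant d}$ be its columns, and set $V:=\text{span}(x_\alpha)_{|\alpha|\leqslant d-d_K}$; by Point~2 this equals $\text{span}(x_\alpha)_{|\alpha|\leqslant d-1}=\text{span}(x_\alpha)_{|\alpha|\leqslant d}$ and has dimension $r:=\text{rank}\,M_{d-d_K}(y)$. The crux is that $T_kx_\alpha:=x_{\alpha+e_k}$ is well defined on $V$: if $w':=\sum_{|\alpha|\leqslant d-1}u_\alpha x_\alpha=0$ and $w:=\sum_{|\alpha|\leqslant d-1}u_\alpha x_{\alpha+e_k}$, then for $|\beta|\leqslant d-1$ the Hankel identity gives $\langle w,x_\beta\rangle_{\mathcal{C}_d}=\langle w',x_{\beta+e_k}\rangle_{\mathcal{C}_d}=0$, and for $|\beta|=d$ one reduces to the previous case because $\text{rank}\,M_d(y)=\text{rank}\,M_{d-1}(y)$ (Point~2) puts $x_\beta$ in $\text{span}(x_\delta)_{|\delta|\leqslant d-1}$; hence $w$ is orthogonal to all of $V$ and vanishes. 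This is the finite, real form of Curto and Fialkow's flat-extension argument. The $T_k$ commute pairwise as in Theorem~\ref{th:trunc} and are self-adjoint, hence simultaneously diagonalizable with real joint eigenvalues $\lambda_1,\hdots,\lambda_r\in\mathbb{R}^n$, giving (exactly as in Theorem~\ref{th:trunc}) the positive $r$-atomic measure $\mu:=\sum_{j=1}^r\langle x_0,E_jx_0\rangle_{\mathcal{C}_d}\,\delta_{\lambda_j}$ with $y_{\alpha,\beta}=\int\bar z^\alpha z^\beta\,d\mu$ for all $|\alpha|,|\beta|\leqslant d$. Finally each $\lambda_j\in K$: since the number $r$ of atoms equals $\dim V$, the $j$-th joint eigenspace meets $V$, so choosing $u\in V$ inside it gives $g_i(\lambda_j)\|u\|_{\mathcal{C}_d}^2=\langle u,g_i(T)u\rangle_{\mathcal{C}_d}=\vec u^HM_{d-d_K}(g_iy)\vec u\geqslant0$ by Point~1, whence $g_i(\lambda_j)\geqslant0$ for $i=1,\hdots,m$ --- verbatim the closing step of the $(\Longleftarrow)$ part of Theorem~\ref{th:trunc}.

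For $(\Longrightarrow)$, suppose a positive $\text{rank}\,M_{d-d_K}(y)$-atomic measure $\mu$, with atoms $\lambda_1,\hdots,\lambda_r$ and weights $m_1,\hdots,m_r>0$ where $r=\text{rank}\,M_{d-d_K}(y)$, is supported on $K$ and represents $y$ up to order $d$. Point~1 is immediate from $\vec p^HM_d(y)\vec p=\int|p|^2\,d\mu\geqslant0$ and $\vec p^HM_{d-d_K}(g_iy)\vec p=\int g_i|p|^2\,d\mu\geqslant0$ (using $g_i\geqslant0$ on $\text{supp}\,\mu\subseteq K$ and that $g_i\bar z^\alpha z^\beta$ has bidegree $\leqslant(d,d)$ for $|\alpha|,|\beta|\leqslant d-d_K$). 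For Point~2, put $x_\alpha:=(\sqrt{m_j}\,\lambda_j^\alpha)_{1\leqslant j\leqslant r}\in\mathbb{C}^r$, so $y_{\alpha,\beta}=\langle x_\alpha,x_\beta\rangle$; since $\text{rank}\,M_{d-d_K}(y)=r$ equals the ambient dimension, $(x_\alpha)_{|\alpha|\leqslant d-d_K}$ already spans $\mathbb{C}^r$, hence so does $(x_\alpha)_{|\alpha|\leqslant d}$ and $\text{rank}\,M_d(y)=r=\text{rank}\,M_{d-d_K}(y)$. The ``moreover'' clause repeats the last computation of the $(\Longrightarrow)$ part of Theorem~\ref{th:trunc}: with $T_k:=\text{diag}(\lambda_{k,1},\hdots,\lambda_{k,r})$ one has $\text{rank}\,g_i(T)=\text{rank}\,M_{d-d_K}(g_iy)$, so the number of atoms on which $g_i$ vanishes is $\dim\text{Ker}\,g_i(T)=r-\text{rank}\,M_{d-d_K}(g_iy)=\text{rank}\,M_d(y)-\text{rank}\,M_{d-d_K}(g_iy)$.

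The step that will require genuine work, rather than a citation of Theorem~\ref{th:trunc}, and hence the main obstacle, is the well-definedness of the shift operators. Theorem~\ref{th:trunc} secures it from positivity of an order-$(d-1)$ localizing matrix of a ball or box constraint, whereas the corollary postulates neither such a constraint nor localizing positivity beyond order $d-d_K$. The replacement --- that for a Hankel (equivalently real) sequence the rank condition $\text{rank}\,M_d(y)=\text{rank}\,M_{d-d_K}(y)$ alone makes the shifts well defined --- is precisely where the Hankel hypothesis does its work, and verifying it (the displayed orthogonality argument above) is the heart of the proof; everything after that merely transcribes the corresponding steps of the proof of Theorem~\ref{th:trunc}.
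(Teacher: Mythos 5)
Your argument matches the paper's proof: for $(\Longrightarrow)$ you reuse Theorem~\ref{th:trunc} verbatim, and for $(\Longleftarrow)$ you use the Hankel identity $y_{\alpha+e_k,\beta}=y_{\alpha,\beta+e_k}$ both to establish well-definedness of the shifts via the orthogonality argument (replacing the ball/box localizing step of Theorem~\ref{th:trunc}) and to make them self-adjoint, which renders Point~3 of Theorem~\ref{th:trunc} automatic. This is exactly what the paper does, with your version spelling out a few more of the implicit details (realness of the entries, the compression remark, and why the $n>1$ hypothesis can be dropped).
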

\begin{proof}
($\Longrightarrow$) Same as in proof of Theorem~\ref{th:trunc}. ($\Longleftarrow$) The Hankel property implies that the shifts in the proof of Theorem~\ref{th:trunc} are well-defined and self-adjoint. Indeed, consider $1 \leqslant k \leqslant n$ and $u,v \in V$. According the Point 2, there exists some complex numbers $(u_\alpha)_{|\alpha|\leqslant d-1}$ and $(v_\alpha)_{|\alpha|\leqslant d-1}$ such that $u = \sum_{|\alpha|\leqslant d-1} u_\alpha x_\alpha$ and $v = \sum_{|\alpha|\leqslant d-1} v_\alpha x_\alpha$. If $\sum_{|\alpha|\leqslant d-1} u_\alpha x_\alpha=0$, then for all $|\beta|\leqslant d-1$, we have
$ \langle \sum_{|\alpha|\leqslant d-1} u_{\alpha} x_{\alpha+e_k} , x_\beta \rangle_{\mathcal{C}_d} = \sum_{|\alpha| \leqslant d-1}  \bar{u}_{\alpha} \langle x_{\alpha+e_k} , x_\beta \rangle_{\mathcal{C}_d} = \sum_{|\alpha| \leqslant d-1}  \bar{u}_{\alpha} y_{ \alpha + e_k , \beta } = $\\ $\sum_{|\alpha| \leqslant d-1}  \bar{u}_{\alpha} y_{ \alpha , \beta + e_k } = \langle \sum_{|\alpha|\leqslant d-1} u_{\alpha} x_{\alpha} , x_{\beta+e_k} \rangle_{\mathcal{C}_d} = 0 $, and hence $\sum_{|\alpha|\leqslant d-1} u_{\alpha} x_{\alpha+e_k} = 0$. $T_k$ is thus well defined. Moreover, we have $T_k^* = T_k$ because $\langle T_k u , v \rangle_{\mathcal{C}_d} = \langle \sum_{\alpha} u_\alpha x_{\alpha+e_k} , \sum_{\alpha} v_\alpha x_\alpha \rangle_{\mathcal{C}_d} = \sum_{\alpha,\beta} \bar{u}_\alpha v_\beta \langle  x_{\alpha+e_k} ,  x_\beta \rangle_{\mathcal{C}_d} = \sum_{\alpha,\beta} \bar{u}_\alpha v_\beta y_{\alpha+e_k,\beta} =$\\ $\sum_{\alpha,\beta} \bar{u}_\alpha v_\beta y_{\alpha,\beta+e_k} = \langle u , T_k v \rangle_{\mathcal{C}_d}$.
\end{proof}

A Hermitian matrix that is a Hankel matrix is real symmetric. Hence the atoms in Corollary~\ref{cor:trunc} lie in $K \cap \mathbb{R}^n$. Corollary~\ref{cor:trunc} is thus the same as~\cite[Theorem 3.11]{lasserre-2010} due to Curto and Fialkow~\cite[Theorem 1.1]{curto-2005}. This observation leads to Figure~\ref{fig:hankel}.
%
%
%
%
\begin{figure}[ht]
  \centering
  
\begin{tikzpicture}[scale=1, transform shape]
 \matrix (m) [matrix of math nodes, row sep=6em, column sep=9em]
	{ \begin{array}{c} \inf_{z\in \mathbb{C}^n} f(z) \\[.5em] \text{s.t.}~ g_i(z) \geqslant 0 \end{array} & \begin{array}{c} \inf_{x\in \mathbb{R}^n} f(x) \\[.5em] \text{s.t.}~ g_i(x) \geqslant 0 \end{array}  \\
     \begin{array}{c} \inf_{y_{\alpha,\beta}} L_y(f) ~~\text{s.t.} \\[.5em] M_{d-k_i}(g_iy) \succcurlyeq 0 \\[.5em] y_{0,0} = 1 \end{array} & \begin{array}{c} \inf_{y_\alpha} L_y(f) ~~\text{s.t.} \\[.5em]  M_{d-k_i}(g_iy) \succcurlyeq 0 \\[.5em] y_{0} = 1 \end{array} \\  };
{ [start chain] \chainin (m-1-1);
    \chainin (m-1-2) [join={node[above,labeled] {\mbox{ Hankel property }}}];
        }
{ [start chain] \chainin (m-1-1);
    \chainin (m-1-2) [join={node[below,labeled] {\mbox{$\begin{array}{c} \bar{z}^{\alpha} z^{\beta} = \bar{z}^{\gamma} z^{\delta} \\ \forall \alpha+\beta=\gamma+\delta \end{array}$ }}}];
        }
{ [start chain] \chainin (m-1-2);
    \chainin (m-2-2);
        }
{ [start chain] \chainin (m-1-1);
    \chainin (m-2-1);
        }
{ [start chain] \chainin (m-2-1);
    \chainin (m-2-2) [join={node[above,labeled] {\mbox{  Hankel property}}}];
        }
{ [start chain] \chainin (m-2-1);
    \chainin (m-2-2) [join={node[below,labeled] {\mbox{ $\begin{array}{c} y_{\alpha,\beta} = y_{\gamma,\delta} \\ \forall \alpha+\beta=\gamma+\delta \end{array}$ }}}];
        }

\draw (-4.7,1) node[below right] {Complex};
\draw (-4.8,.61) node[below right] {Hierarchy};
\draw (-6.05,0.23) node[below right] {\mbox{$y_{\alpha,\beta} = \int_K \bar{z}^\alpha z^\beta d\mu$}};

\draw (3.16,1) node[below right] {Real};
\draw (3.16,0.61) node[below right] {Hierarchy};
\draw (3.16,0.23) node[below right] {\mbox{$y_{\alpha} = \int_K x^\alpha d\mu$}};

\end{tikzpicture}
\caption{Commutativity of Relaxation and Hankel Property}
\label{fig:hankel}
\end{figure}
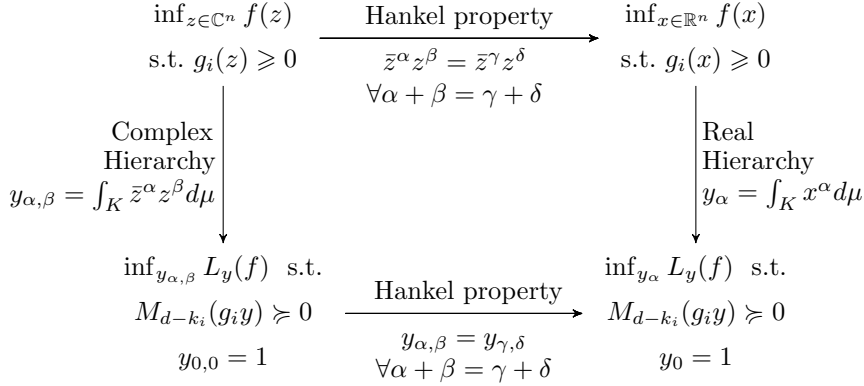 

Next we transpose~\cite[Theorem 1]{josz-2015} from real to complex numbers.
\begin{proposition}
\label{prop:duality}
\normalfont
\textit{Under assumption \eqref{eq:sa},} $\rho_d^* = \rho_d \in \mathbb{R}\cup\{+\infty\}, \forall d \geqslant d^\text{min}$.
\end{proposition}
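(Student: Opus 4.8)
The plan is to transpose the proof of \cite[Theorem 1]{josz-2015}: the sphere assumption \eqref{eq:sa} makes the feasible set of the moment relaxation \eqref{eq:momentd} compact, and compactness of that set is precisely what forces a Slater-type interiority property of the sum-of-squares cone, from which the absence of a duality gap follows. Set $C_d := \Sigma_{d-k_0}[z]g_0 + \hdots + \Sigma_{d-k_m}[z]g_m$, a convex cone in the finite-dimensional real space $\mathbb{R}_d[\bar{z},z]$, and pair $\mathbb{R}_d[\bar{z},z]$ with $\mathcal{H}_d$ by $\langle \varphi, y\rangle := L_y(\varphi)$ (a perfect pairing of Hermitian matrices). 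The identity $L_y(|p|^2 g_i) = \vec{p}^H M_{d-k_i}(g_iy)\vec{p}$ from Section~\ref{subsec:Complex Moment Hierarchy} shows that the dual cone is $C_d^* = \{\, y \in \mathcal{H}_d : M_{d-k_i}(g_iy)\succcurlyeq 0,\ i=0,\hdots,m \,\}$, that the feasible set of \eqref{eq:momentd} is $F := \{\, y \in C_d^* : y_{0,0} = 1 \,\}$, and that $\rho_d^* = \sup\{\, \lambda : f - \lambda \in C_d \,\}$. Under \eqref{eq:sa} the sphere enters as two opposite constraints $\pm s$ with $s(z) = R^2 - |z_1|^2 - \hdots - |z_n|^2$, so every $y\in C_d^*$ has $M_d(y)\succcurlyeq 0$ and $M_{d-1}(sy)=0$, whence Lemma~\ref{lemma:sphere} gives $\text{Tr}\,M_d(y) \leqslant y_{0,0}\sum_{l=0}^{d} R^{2l}$. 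In particular $y_{0,0}=0$ forces $M_d(y)=0$ and $y=0$, and on $F$ the matrices $M_d(y)$ are positive semidefinite with uniformly bounded trace, so $F$ is bounded and (being closed) compact; thus $\rho_d = \min_F L_y(f)\in\mathbb{R}$ when $F\neq\emptyset$ and $\rho_d=+\infty$ otherwise, which already yields $\rho_d\in\mathbb{R}\cup\{+\infty\}$.

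Next I would derive $1\in\text{int}\,C_d$. Scaling by $y_{0,0}$ and using $y=0$ when $y_{0,0}=0$, the set $\{\, y\in C_d^* : y_{0,0}\leqslant 1 \,\}$ is bounded; since $C_d^* = (\text{cl}\,C_d)^*$ and $y_{0,0}=\langle 1,y\rangle$, the standard fact that a closed convex cone $K$ satisfies $e\in\text{int}\,K$ if and only if $\{\, y\in K^* : \langle e,y\rangle\leqslant 1 \,\}$ is bounded gives $1\in\text{int}(\text{cl}\,C_d)$, and hence $1\in\text{int}\,C_d$ by convexity. Then, for a convex cone, $1\in\text{int}\,C_d$ and $q\in\text{cl}\,C_d$ imply $q+\epsilon\in C_d$ for every $\epsilon>0$ (push along the segment to the interior point $1$); applied to $q=f-\lambda$ this gives $f-(\lambda-\epsilon)\in C_d$, so letting $\epsilon\downarrow 0$ one obtains $\rho_d^* = \sup\{\, \lambda : f-\lambda\in\text{cl}\,C_d \,\}$.

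It remains to show this last quantity equals $\rho_d$. The inequality $\leqslant$ is weak duality: $L_y(f-\lambda)\geqslant 0$ for $y\in F$. For $\geqslant$, fix $\lambda<\rho_d$; if $f-\lambda\notin\text{cl}\,C_d$, separate it from the closed convex cone $\text{cl}\,C_d$ to get $y\in\mathcal{H}_d$ with $L_y\geqslant 0$ on $C_d$ (so $y\in C_d^*$) and $L_y(f-\lambda)<0$. If $y_{0,0}>0$ then $y/y_{0,0}\in F$ and $L_{y/y_{0,0}}(f)=\lambda+L_y(f-\lambda)/y_{0,0}<\lambda<\rho_d$, contradicting $\rho_d=\inf_F L_y(f)$; if $y_{0,0}=0$ then Lemma~\ref{lemma:sphere} forces $y=0$, contradicting $L_y(f-\lambda)<0$. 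Hence $f-\lambda\in\text{cl}\,C_d$ for all $\lambda<\rho_d$, so $\rho_d^*=\rho_d$. (When $F=\emptyset$ the trace bound gives $C_d^*=\{0\}$, hence $\text{cl}\,C_d=\mathbb{R}_d[\bar{z},z]$ and $\rho_d^*=\rho_d=+\infty$, consistently with the above.)

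The main obstacle is that the sum-of-squares cone $C_d$ need not be closed, so one cannot separate a suboptimal $f-\lambda$ directly against $C_d$; the whole argument rests on converting the \emph{compactness} of the moment feasible set---which the sphere assumption buys through Lemma~\ref{lemma:sphere}---into the \emph{interiority} statement $1\in\text{int}\,C_d$, which is exactly what bridges $C_d$ and its closure. The remaining ingredients (the separation, the scaling normalizations, and the two convex-cone lemmas) are routine.
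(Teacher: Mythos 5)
Your proof is correct, and it takes a genuinely different route from the paper's. Both proofs hinge on the same crucial ingredient, namely Lemma~\ref{lemma:sphere}, which under the sphere assumption bounds $\mathrm{Tr}\,M_d(y)$ by $y_{0,0}\sum_{l=0}^d R^{2l}$ for any $y$ in the dual cone of $C_d$; this is what makes the moment feasible set compact and $y_{0,0}=0$ collapse $y$ to zero. From there, however, the arguments diverge. The paper splits into two cases. When the moment feasible set is non-empty, it converts the problem to real numbers and cites Trnovsk\'a's result (a non-empty compact primal SDP feasible set guarantees a dual interior point and hence zero duality gap). When the feasible set is empty, it argues by contradiction that the problem must be \emph{strongly} infeasible (a weakly infeasible sequence would, via the trace bound, admit a convergent subsequence to a feasible point), then deduces $\rho_d^*=+\infty$ from the existence of an improving ray plus a separately established dual feasible point. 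Your approach avoids both the external SDP theorem and the case analysis: you translate the same trace bound into the statement $1\in\mathrm{int}\,C_d$ via the standard boundedness characterization of interiority, use that to replace $C_d$ by $\mathrm{cl}\,C_d$ in the SOS side without loss, and then close the gap to $\rho_d$ by a separating-hyperplane argument (with the degenerate case $y_{0,0}=0 \Rightarrow y=0$ handled again by Lemma~\ref{lemma:sphere}). The empty-feasible-set case drops out for free since then $C_d^*=\{0\}$ and $\mathrm{cl}\,C_d$ is the whole space. In exchange for not relying on a citable SDP-duality black box, you carry a couple of small convex-analysis facts (interior of a convex set equals interior of its closure when non-empty; pushing a closure point toward an interior point of a cone stays inside the cone), but these are elementary. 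Your reformulation also makes explicit that the sphere assumption is precisely a dual Slater condition, which is a nice conceptual gain over the paper's presentation.
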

\begin{proof}
Given $A \in \mathcal{H}_d$, consider the operator norm $\| A \| $, the largest eigenvalue of $A$ in absolute value, and the Frobenius norm $\| A \|_{\mathbb{F}} := \sqrt{ \langle A , A \rangle_{\mathcal{H}_d} }$. Consider $d \geqslant d^\text{min}$. Two cases can occur. Case 1: the feasible set of the complex moment relaxation of order $d$ is non-empty. All norms are equivalent in finite dimension so there exists a constant $C_d \in \mathbb{R}$ such that for all feasible points $(y_{\alpha,\beta})_{|\alpha|,|\beta|\leqslant d}$ we have
$\sqrt{ \sum_{|\alpha|,|\beta|\leqslant d} |y_{\alpha,\beta}|^2 }  = \| M_{d}(y) \|_{\mathbb{F}} \leqslant C_d ~ \| M_{d}(y) \| \leqslant C_d ~ \sum_{l=0}^d R^{2l}$ according to Lemma \ref{lemma:sphere}.                                                          
As a result, the feasible set of the complex moment relaxation of order $d$ is a non-empty compact set and so is its image by $\Lambda$ (defined in \eqref{eq:conversion}). We can thus apply Trnovsk\'a's result \cite{maria-2005} which states that in a semidefinite program in real numbers, if the primal feasible set is non-empty and compact, then there exists a dual interior point and there is no duality gap. Case 2: the feasible set of the complex moment relaxation of order $d$ is empty, i.e. $\rho_d = + \infty$. It must be strongly infeasible because it cannot be weakly infeasible (see~\cite[Section 5.2]{klerk-2000} for definitions). Indeed, if it is weakly infeasible, then there exists a sequence $(y^j)_{j\in \mathbb{N}}$ of elements of $\mathcal{H}$ such that for all $j \in \mathbb{N}$, we have $|y^j_{0,0}-1| \leqslant \frac{1}{j+1}$ and
$\lambda_\text{min} ( M_{d-k_i} (g_iy^j) ) \geqslant - \frac{1}{j+1}$ where $i = 0,\hdots,m$.
Define $c:= (n+d)!/(n!d!)$. We now mimick the computations in Lemma \ref{lemma:sphere} using $y^j_{0,0} \leqslant 1 + \frac{1}{j+1} \leqslant 2$ and $|\text{Tr}(M_{l-1} (sy^j))|  \leqslant \frac{c}{j+1} \leqslant c$ if $1\leqslant l \leqslant d$. Consider $j_0 \in \mathbb{N}$ such that for all $j\geqslant j_0$ and $1\leqslant l \leqslant d$, we have $\sum_{|\alpha|\leqslant l-1,|\gamma| = 1} y_{\gamma+\alpha,\gamma+\alpha}^j - \sum_{0<|\alpha|\leqslant l}  y_{\alpha,\alpha}^j \geqslant -1$.The concluding equation in the proof of Lemma \ref{lemma:sphere} then becomes $\sum_{|\alpha|\leqslant l}  y_{\alpha,\alpha}^j \leqslant 2 + R^2 \left(\sum_{|\alpha|\leqslant l-1}  y_{\alpha,\alpha}^j \right) + c+1$.
As a result, $\text{Tr}(M_{d} (y^j)) = \sum_{|\alpha|\leqslant d}  y_{\alpha,\alpha}^j \leqslant ( 3 + c ) \sum_{l=0}^d R^{2l}$, which, together with $\lambda_\text{min} ( M_{d} (y^j) ) \geqslant -\frac{1}{j+1} \geqslant- 1$, yields
$\lambda_\text{max} ( M_{d} (y^j) ) \leqslant ( 3 + c ) \sum_{l=0}^d R^{2l} + c - 1$. Hence for all $j \geqslant j_0$, the spectrum of $M_{d} (y^j)$ is lower bounded by $-1$ and upper bounded by $B_d :=  ( 3 + c ) \sum_{l=0}^d R^{2l} + c - 1 \geqslant 1$. We therefore have
$\sqrt{ \sum_{|\alpha|,|\beta|\leqslant d} |y_{\alpha,\beta}^j|^2 }  \leqslant C_d ~ \| M_{d}(y) \| \leqslant  C_d \times B_d$.
The sequence $(y^j)_{j\geqslant j_0}$ is thus included in a compact set. Hence there exists a subsequence that converges towards a limit $y^\text{lim}$ which satisfies $y^\text{lim}_{0,0} =1$ and the constraints $\lambda_\text{min} ( M_{d-k_i} (g_iy^\text{lim}) ) \geqslant 0, ~ i = 0,\hdots,m$. Therefore $y^\text{lim}$ is a feasible point of the complex moment relaxation of order $d$, which is a contradiction. Strong infeasibility means that the dual feasible set contains an improving ray~\cite[Definition 5.2.2]{klerk-2000}. Moreover, $\inf_{y\in \mathcal{H}_d} L_y(f)$ subject to $y_{0,0} = 1, ~ M_d(y) \succcurlyeq 0, ~\text{and} ~ M_{d-1}(sy) = 0$ is a semidefinite program with a non-empty compact feasible set hence the dual feasible set contains a point $(\lambda,\sigma_0,\sigma_1)$. As result $(\lambda,\sigma_0,\sigma_1,0,\hdots,0)$ is a feasible point of the complex sum-of-squares relaxation of order $d$. Together with the improving ray, this means that $\rho_d^* = + \infty$.
To conclude, $\rho_d^* = \rho_d$ in both cases.
\end{proof}
\begin{proposition}
\label{prop:saddle}
\normalfont
\textit{Assume that \eqref{eq:complexPOP} satisfies \eqref{eq:weak} and has a global solution} $z^\text{opt} \in K^\text{opt}$. \textit{In addition, assume that} $(\sigma_0^\text{opt},\hdots,\sigma_m^\text{opt}) \in \Sigma[z]^{m+1}$ \textit{is an optimal solution to the sum-of-squares problem \eqref{eq:sosinf}. 
Then} $(z^\text{opt},\sigma_1^\text{opt},\hdots,\sigma_m^\text{opt})$ \textit{is a saddle point of $\phi : \mathbb{C}^n \times \Sigma[z]^m \longrightarrow \mathbb{R}$ defined by $(z,\sigma) \longmapsto f(z) - \sum_{i=1}^m \sigma_i(z) g_i(z)$.}
\end{proposition}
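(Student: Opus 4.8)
The plan is to verify the defining inequalities of a saddle point. Writing $\sigma^\text{opt} := (\sigma_1^\text{opt},\hdots,\sigma_m^\text{opt})$, I must show
\[
\phi(z^\text{opt},\sigma) \;\leqslant\; \phi(z^\text{opt},\sigma^\text{opt}) \;\leqslant\; \phi(z,\sigma^\text{opt})
\]
for every $z \in \mathbb{C}^n$ and every $\sigma = (\sigma_1,\hdots,\sigma_m) \in \Sigma[z]^m$ (recall that $\phi$ does not involve $\sigma_0$). Both inequalities will be driven by a single complementary-slackness identity obtained by evaluating the sum-of-squares decomposition at the global minimizer $z^\text{opt}$.

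First I would pin down the optimal value of \eqref{eq:sosinf}. Denote by $(\lambda^\text{opt},\sigma_0^\text{opt},\hdots,\sigma_m^\text{opt})$ the given optimal solution, so that $\lambda^\text{opt}=\rho^*$. Because \eqref{eq:complexPOP} satisfies \eqref{eq:weak}, the strengthening of D'Angelo's and Putinar's Positivstellensatz due to Putinar and Scheiderer~\cite{putinar-2013} that is invoked just after \eqref{eq:weak} applies to the polynomial $f-(f^\text{opt}-\epsilon)$, which is strictly positive on $K$ for every $\epsilon>0$; hence it decomposes as $\sum_{i=0}^m \sigma_i g_i$ with $\sigma_i\in\Sigma[z]$, so $f^\text{opt}-\epsilon$ is feasible for \eqref{eq:sosinf} for all $\epsilon>0$ and $\rho^*\geqslant f^\text{opt}$. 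Combined with $\rho^*\leqslant\rho\leqslant f^\text{opt}$ from Proposition~\ref{prop:dualconverge}, this gives $\lambda^\text{opt}=\rho^*=f^\text{opt}$. Substituting into the feasibility constraint of \eqref{eq:sosinf} (with $g_0:=1$) yields the polynomial identity
\[
f - f^\text{opt} = \sigma_0^\text{opt} + \sum_{i=1}^m \sigma_i^\text{opt}\, g_i \qquad \text{on } \mathbb{C}^n .
\]
Evaluating at $z^\text{opt}\in K^\text{opt}$, the left-hand side equals $f(z^\text{opt})-f^\text{opt}=0$, while on the right $\sigma_0^\text{opt}(z^\text{opt})\geqslant 0$ and, for $i=1,\hdots,m$, $\sigma_i^\text{opt}(z^\text{opt})\geqslant 0$ (a sum of squared moduli) and $g_i(z^\text{opt})\geqslant 0$ (since $z^\text{opt}\in K$). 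A sum of non-negative reals equal to zero forces each summand to vanish:
\[
\sigma_0^\text{opt}(z^\text{opt}) = 0 \qquad\text{and}\qquad \sigma_i^\text{opt}(z^\text{opt})\, g_i(z^\text{opt}) = 0,\quad i=1,\hdots,m .
\]

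With these identities the two inequalities are immediate. For the left one, for any $\sigma\in\Sigma[z]^m$ the numbers $\sigma_i(z^\text{opt})g_i(z^\text{opt})$ are non-negative, so $\phi(z^\text{opt},\sigma)=f(z^\text{opt})-\sum_{i=1}^m\sigma_i(z^\text{opt})g_i(z^\text{opt})\leqslant f(z^\text{opt})$, while complementary slackness gives $\phi(z^\text{opt},\sigma^\text{opt})=f(z^\text{opt})$. For the right one, for any $z\in\mathbb{C}^n$ the polynomial identity above gives $\phi(z,\sigma^\text{opt})=f(z)-\sum_{i=1}^m\sigma_i^\text{opt}(z)g_i(z)=f^\text{opt}+\sigma_0^\text{opt}(z)\geqslant f^\text{opt}=f(z^\text{opt})=\phi(z^\text{opt},\sigma^\text{opt})$, using $\sigma_0^\text{opt}(z)\geqslant 0$.

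I expect the only genuine obstacle to be that first step: showing that the optimal value of \eqref{eq:sosinf} is exactly $f^\text{opt}$, which is what makes the sum-of-squares decomposition tight at $z^\text{opt}$ and hence produces complementary slackness. This is where hypothesis \eqref{eq:weak} is indispensable, the sphere assumption \eqref{eq:sa} not being imposed here; everything afterwards is elementary non-negativity bookkeeping, with no need for any strong-duality statement about the relaxations.
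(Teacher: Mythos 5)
Your proof is correct and follows essentially the same route as the paper: evaluate the sum-of-squares identity at $z^\text{opt}$ to get complementary slackness, then read off the two saddle inequalities. The only difference is that you explicitly justify why $\lambda^\text{opt}=\rho^*=f^\text{opt}$ (via the Putinar--Scheiderer strengthening and Proposition~\ref{prop:dualconverge}), whereas the paper simply asserts "the optimality of $(\sigma_0^\text{opt},\hdots,\sigma_m^\text{opt})$ means that $f - f^\text{opt} = \sum_{i=0}^m \sigma_i^\text{opt} g_i$" without spelling out that this is precisely where hypothesis \eqref{eq:weak} is used; your version is therefore a slightly more complete write-up of the same argument.
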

\begin{proof}
The optimality of $(\sigma_0^\text{opt},\hdots,\sigma_m^\text{opt})$ means that $f - f^\text{opt} = \sum_{i=0}^m \sigma_i^\text{opt} g_i$. With $f(z^\text{opt}) - f^\text{opt} = \sum_{i=0}^m \sigma_i^\text{opt}(z^\text{opt}) g_i(z^\text{opt}) = 0$, $\sigma_i^\text{opt}(z^\text{opt}) \geqslant 0$, and $g_i(z^\text{opt}) \geqslant 0$, we have $\sigma_i^\text{opt}(z^\text{opt}) g_i(z^\text{opt}) = 0$ for $i=0,\hdots,m$. It follows that $\phi(z^\text{opt},\sigma) \leqslant \phi(z^\text{opt},\sigma^\text{opt})$ for all $\sigma \in \Sigma[z]$. For all $z \in \mathbb{C}^n$, $\phi(z^\text{opt},\sigma^\text{opt}) \leqslant \phi(z,\sigma^\text{opt})$ because $f(z) - f^\text{opt} - \sum_{i=1}^m \sigma_i^\text{opt}(z) g_i(z) = \sigma_0^\text{opt}(z) \geqslant 0$.
\end{proof}

Given an application $\varphi : \mathbb{C}^n \longrightarrow \mathbb{R}$, define $\tilde{\varphi} : \mathbb{R}^{2n} \longrightarrow \mathbb{R}$ by $(x,y) \longmapsto \varphi(x+\textbf{i}y)$. 
If $\tilde{\varphi}$ is $\mathbb{R}$-differentiable at point $(x,y) \in \mathbb{R}^{2n}$, consider the Wirtinger derivative~\cite{wirtinger-1927} defined by $\nabla \varphi (x+\textbf{i}y):= \frac{1}{2}(\nabla_x \tilde{\varphi} (x,y) - \textbf{i} \nabla_y \tilde{\varphi} (x,y) ) \in \mathbb{C}^{n}$.
\begin{corollary}
\label{cor:KKT}
\normalfont
\textit{With the same assumptions as in} Proposition \ref{prop:saddle}\textit{, we have}
\begin{equation}
\boxed{
\begin{array}{l}
\nabla f (z^\text{opt}) = \sum_{i=1}^m \sigma_i^\text{opt}(z^\text{opt}) \nabla g_i(z^\text{opt}), \\
\sigma_i^\text{opt}(z^\text{opt}), g_i(z^\text{opt}) \geqslant 0, ~~~ i = 1,\hdots,m, \\
\sigma_i^\text{opt}(z^\text{opt})  g_i(z^\text{opt}) = 0, ~~~ i = 1,\hdots,m.
\end{array}
}
\end{equation}
\end{corollary}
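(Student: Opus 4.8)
The plan is to read off all three conclusions from the saddle-point characterization of Proposition~\ref{prop:saddle}. Write $\sigma^\text{opt} := (\sigma_1^\text{opt},\hdots,\sigma_m^\text{opt})$ and $\phi(z,\sigma) := f(z) - \sum_{i=1}^m \sigma_i(z) g_i(z)$ as in that statement. By Proposition~\ref{prop:saddle}, $(z^\text{opt},\sigma^\text{opt})$ is a saddle point of $\phi$, so in particular $z^\text{opt}$ globally minimizes $z \longmapsto \phi(z,\sigma^\text{opt})$ over all of $\mathbb{C}^n$. This map is a real-valued complex polynomial, hence, regarded as a function on $\mathbb{R}^{2n}$ via $z = x + \textbf{i}y$, it is $\mathbb{R}$-differentiable everywhere and both $\nabla_x$ and $\nabla_y$ of it vanish at $z^\text{opt}$; therefore its Wirtinger derivative vanishes there. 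Using $\mathbb{C}$-linearity of $\nabla$, this reads $\nabla f(z^\text{opt}) = \sum_{i=1}^m \nabla(\sigma_i^\text{opt} g_i)(z^\text{opt})$.

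First I would expand each summand with the Leibniz rule for the Wirtinger derivative (legitimate because $\nabla_x$ and $\nabla_y$ obey the product rule for real-valued functions): $\nabla(\sigma_i^\text{opt} g_i)(z^\text{opt}) = \sigma_i^\text{opt}(z^\text{opt}) \nabla g_i(z^\text{opt}) + g_i(z^\text{opt}) \nabla \sigma_i^\text{opt}(z^\text{opt})$. The term to be eliminated is $g_i(z^\text{opt}) \nabla \sigma_i^\text{opt}(z^\text{opt})$. I split into two cases. If $g_i(z^\text{opt}) = 0$ it vanishes trivially. If $g_i(z^\text{opt}) > 0$, then the complementary-slackness identity $\sigma_i^\text{opt}(z^\text{opt}) g_i(z^\text{opt}) = 0$ already derived inside the proof of Proposition~\ref{prop:saddle} forces $\sigma_i^\text{opt}(z^\text{opt}) = 0$; but $\sigma_i^\text{opt} \in \Sigma[z]$ is a sum of squared moduli of holomorphic polynomials and hence nonnegative on $\mathbb{C}^n$, so $z^\text{opt}$ is a global minimizer of $\sigma_i^\text{opt}$ and thus $\nabla \sigma_i^\text{opt}(z^\text{opt}) = 0$. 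In either case $\nabla(\sigma_i^\text{opt} g_i)(z^\text{opt}) = \sigma_i^\text{opt}(z^\text{opt}) \nabla g_i(z^\text{opt})$, which yields the stationarity equation.

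The remaining assertions are immediate. We have $\sigma_i^\text{opt}(z^\text{opt}) \geqslant 0$ since $\sigma_i^\text{opt} \in \Sigma[z]$, we have $g_i(z^\text{opt}) \geqslant 0$ since $z^\text{opt} \in K^\text{opt} \subset K$, and $\sigma_i^\text{opt}(z^\text{opt}) g_i(z^\text{opt}) = 0$ is precisely the relation obtained in the proof of Proposition~\ref{prop:saddle} from $f(z^\text{opt}) - f^\text{opt} = \sum_{i=0}^m \sigma_i^\text{opt}(z^\text{opt}) g_i(z^\text{opt}) = 0$ with every summand nonnegative (recall $g_0 = 1$).

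The only nonroutine point is the vanishing of $g_i(z^\text{opt}) \nabla \sigma_i^\text{opt}(z^\text{opt})$ when the $i$-th constraint is inactive: here one cannot invoke $g_i(z^\text{opt}) = 0$, and the argument instead leans on complementary slackness together with the global nonnegativity of sums of squared moduli, which makes $z^\text{opt}$ a critical point of every $\sigma_i^\text{opt}$ not already killed by $g_i$. Everything else — differentiability of real polynomials, vanishing of the gradient at an unconstrained minimum, and the $\mathbb{C}$-linearity and Leibniz rule for the Wirtinger derivative — is standard.
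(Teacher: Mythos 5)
Your proof is correct and follows the same high-level route as the paper: stationarity of $\phi(\cdot,\sigma^\text{opt})$ at $z^\text{opt}$, the Leibniz rule for the Wirtinger derivative, and elimination of the term $g_i(z^\text{opt})\nabla\sigma_i^\text{opt}(z^\text{opt})$. The difference is in how that last term is killed. You split into the two cases $g_i(z^\text{opt})=0$ (trivial) and $g_i(z^\text{opt})>0$, deducing in the second case from complementary slackness that $\sigma_i^\text{opt}(z^\text{opt})=0$, and then observing that $z^\text{opt}$ is an \emph{unconstrained} minimizer of the globally nonnegative $\sigma_i^\text{opt}\in\Sigma[z]$, hence a critical point. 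The paper instead asserts $\sigma_i^\text{opt}(z^\text{opt})=0$ for every $i=1,\hdots,m$ and then proves $\nabla\sigma_i^\text{opt}(z^\text{opt})=0$ via a divisibility argument ($|z_k-z_k^\text{opt}|^2$ divides each single-variable restriction, giving a double root). Two remarks. First, your critical-point-at-a-global-minimum argument replaces the paper's divisibility argument and is more elementary; both establish the same intermediate fact (a nonnegative polynomial vanishing at a point has zero gradient there). Second, and more interestingly, your case split actually patches a small gap in the paper's proof: complementary slackness only gives $\sigma_i^\text{opt}(z^\text{opt})\,g_i(z^\text{opt})=0$, which forces $\sigma_i^\text{opt}(z^\text{opt})=0$ only when $g_i(z^\text{opt})\neq 0$; for an active constraint ($g_i(z^\text{opt})=0$) it is the factor $g_i(z^\text{opt})$ in the unwanted term that vanishes, not $\sigma_i^\text{opt}(z^\text{opt})$. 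The paper's proof reads as if $\sigma_i^\text{opt}(z^\text{opt})=0$ always held, which is not justified; your explicit case analysis handles this correctly.
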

\begin{proof}
$z^\text{opt}$ is a minimizer of $z \in \mathbb{C}^n \longmapsto \phi(z,\sigma^\text{opt})$ thus $\nabla_z \phi(z^\text{opt},\sigma^\text{opt}) = \nabla f (z^\text{opt}) - \sum_{i=1}^m \nabla \sigma_i^\text{opt}(z^\text{opt}) g_i(z^\text{opt}) - \sum_{i=1}^m \sigma_i^\text{opt}(z^\text{opt}) \nabla g_i(z^\text{opt}) = 0$. Consider $1\leqslant i \leqslant m$. Since $\sigma_i^\text{opt}(z^\text{opt})= 0$ and $\sigma_i^\text{opt} \in \Sigma[z]$, it must be that $|z_k-z_k^\text{opt}|^2$ divides $\sigma_{i,k}^\text{opt}: z_k \in \mathbb{C} \longmapsto \sigma_i^\text{opt}(z_1^\text{opt},\hdots,z_{k-1}^\text{opt},z_k,z_{k+1}^\text{opt},\hdots,z_n^\text{opt})$. With $z_k^\text{opt} =: x_k^\text{opt}+\textbf{i}y_k^\text{opt}$, the real number $x_k^\text{opt}$ is a root of multiplicity 2 of $x_k \in \mathbb{R} \longmapsto \sigma_{i,k}^\text{opt} (x_k+\textbf{i}y_k^\text{opt})$, with an analogous remark for $y_k^\text{opt}$. Thus $\nabla \sigma_i^\text{opt}(z^\text{opt}) = 0$ which leads to the desired result.
\end{proof}
\subsection{Comparison of Real and Complex Hierarchies} 
\label{subsec:Comparison of Real and Complex Hierarchies} 
\begin{figure}[ht]
  \centering

\begin{tikzpicture}[scale=1, transform shape]
 \matrix (m) [matrix of math nodes, row sep=2em, column sep=.0em]
	{\small \boxed{\begin{array}{c} \sup_{\lambda \in \mathbb{R}} \lambda \\[.7em] \text{s.t.}~\forall z \in K, \\[.7em] f(z) - \lambda \geqslant 0 \\ \end{array} }
	& 
	\small \boxed{ \begin{array}{c} \sup_{\lambda \in \mathbb{R}} \lambda \\[.7em] \text{s.t.}~ \forall x+\textbf{i}y \in K, \\[.7em] f(x+\textbf{i}y) - \lambda \geqslant 0 \end{array}}  \\
    & 
    \small \boxed{\begin{array}{c} \sup_{\lambda \in \mathbb{R}} \lambda ~~ \text{s.t.} \\\\ \forall x,y \in \mathbb{R}^n, ~~ f(x+\textbf{i}y) - \lambda = \\[.5em] \sum\limits_{i=0}^m \left(\sum\limits_{j=1}^{r_i} \left|\sum\limits_{|\alpha+\beta|\leqslant d-k_i} p_{j,\alpha,\beta}^i (x-\textbf{i}y)^\alpha (x+\textbf{i}y)^\beta\right|^2\right) g_i(x+\textbf{i}y) \end{array}} ~~~~~~~~~~~~~~~~~~\\
    \small \boxed{\begin{array}{c} \sup_{\lambda \in \mathbb{R}} \lambda ~~ \text{s.t.} \\\\ \forall z \in \mathbb{C}^n, ~~ f(z) - \lambda = \\[.5em] \sum\limits_{i=0}^m \left(\sum\limits_{j=1}^{r_i} \left|\sum\limits_{|\alpha|\leqslant d-k_i} p_{j,\alpha}^i z^\alpha\right|^2\right) g_i(z) \end{array}} 
    & \small \boxed{\begin{array}{c} \sup_{\lambda \in \mathbb{R}} \lambda ~~ \text{s.t.} \\\\ \forall x,y \in \mathbb{R}^n, ~~ f(x+\textbf{i}y) - \lambda = \\[.5em] \sum\limits_{i=0}^m \left(\sum\limits_{j=1}^{r_i} \left|\sum\limits_{|\alpha|\leqslant d-k_i} p_{j,\alpha}^i(x+\textbf{i}y)^\alpha\right|^2\right) g_i(x+\textbf{i}y) \end{array}} ~\\};

{ [start chain] \chainin (m-1-1);
    \chainin (m-3-1);
        }

{ [start chain] \chainin (m-1-2);
    \chainin (m-2-2);
        }
        
{ [start chain] \chainin (m-1-1);
    \chainin (m-1-2) [join={node[above,labeled] {\mbox{\small $\begin{array}{c} \text{Identify} \\ \text{real and} \end{array}$}}}];
        }
{ [start chain] \chainin (m-1-1);
    \chainin (m-1-2) [join={node[below,labeled] {\mbox{\small $\begin{array}{c} \text{imaginary} \\ \text{parts} \end{array}$}}}];
        }
        
{ [start chain] \chainin (m-3-1);
    \chainin (m-3-2) [join={node[above,labeled] {\mbox{\small $\begin{array}{c} \text{Identify} \\ \text{real and} \end{array}$}}}];
        }
{ [start chain] \chainin (m-3-1);
    \chainin (m-3-2) [join={node[below,labeled] {\mbox{\small $\begin{array}{c} \text{imaginary} \\ \text{parts} \end{array}$}}}];
        }
\draw (-6.96,0.8) node[below right] {\small Complex};
\draw (-5.525,0.8) node[below right] {\small Hierarchy};

\draw (2.375,-1.15) node[below right] {\large $\geqslant$};

\draw (1.9,2.4) node[below right] {\small Real};
\draw (2.67,2.4) node[below right] {\small Hierarchy};

\end{tikzpicture}
\caption{Comparison of Real and Complex Hierarchies}
\label{fig:comparison}
\end{figure}
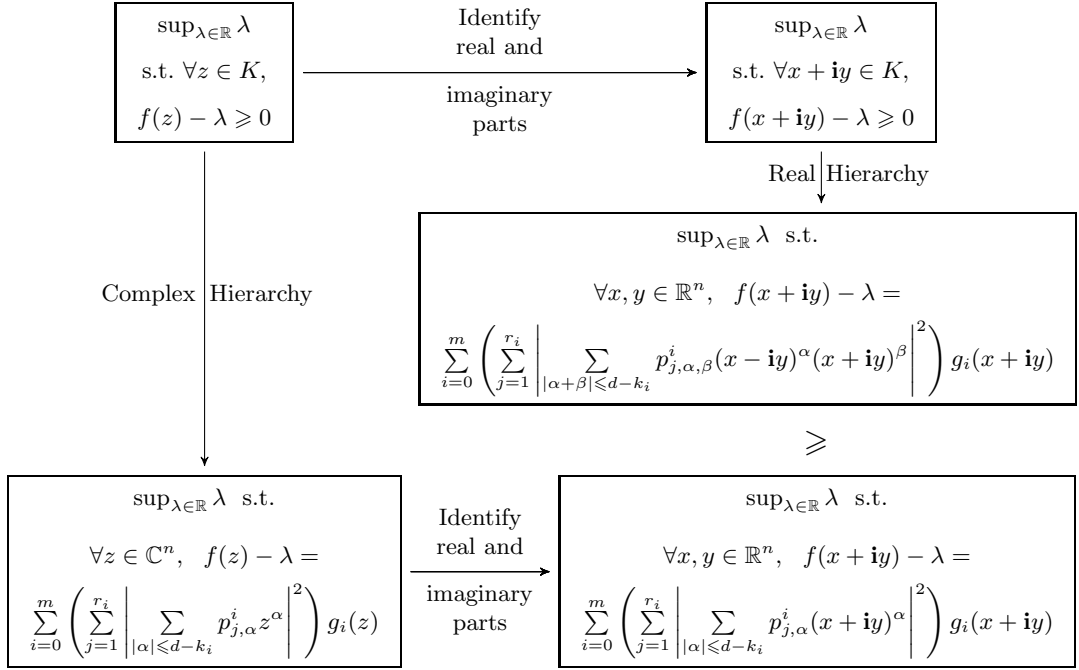

In Figure \ref{fig:comparison} where $p_{j,\alpha}^i, p_{j,\alpha,\beta}^i \in \mathbb{C}$, the real sum-of-squares hierarchy is artificially written using squares of moduli of complex polynomials. It thus yields bounds superior or equal to the complex hierarchy. For example, at order 2, the real hierarchy yields $1.0000$ while the complex hierarchy yields $0.6813$ for \eqref{eq:ex2sphere}.
However, the size of the largest semidefinite constraint in the complex hierarchy when converted to real numbers, i.e. $2\times \text{card}\{\alpha \in \mathbb{N}^n ~\text{s.t.}~ |\alpha|\leqslant d\} = 2(n+d)!/(n!d!)$, is far inferior to that of the real hierarchy, i.e. $\text{card}\{\alpha,\beta \in \mathbb{N}^n ~\text{s.t.}~ |\alpha+\beta|\leqslant d\} =(2n+d)!/((2n)!d!)$. 
At fixed $d$, the size reduction converges towards $2^{d-1}$ as $n \rightarrow \infty$. Further reduction is possible (Section~\ref{subsec:Invariant Hierarchy}).

\subsection{Invariant Hierarchy} 
\label{subsec:Invariant Hierarchy} 
We generalize and transpose to complex numbers the work in~\cite{riener-2013} (see also~\cite{cimpric-2009}). Let $(G,\times)$ denote a compact group whose unit we denote 1. First, consider the continuous action of $G$ on $\mathbb{C}^n$ via $\mathcal{A} : G \times \mathbb{C}^n \longrightarrow \mathbb{C}^n$ such that $\mathcal{A}(1,z) = z$, $\mathcal{A}(g_1 \times g_2,z) = \mathcal{A}(g_1 ,\mathcal{A}(g_2,z))$ for all $z \in \mathbb{C}^n$ and $g_1,g_2 \in G$. Second, consider the action of $G$ on $\mathbb{R}[\bar{z},z]$ via $\mathcal{A}' : G \times \mathbb{R}[\bar{z},z] \longrightarrow \mathbb{R}[\bar{z},z]$ defined by $\mathcal{A}'(g,\varphi) := \varphi(\mathcal{A}(g,z))$. Third, consider the action of $G$ on the set $\mathcal{B}(K)$ of Borel subsets of $K$ via $\mathcal{A}'' : G \times \mathcal{B}(K) \longrightarrow \mathcal{B}(K)$ defined by $\mathcal{A}''(g,B):= \{ z \in K ~|~ \mathcal{A}(g,z) \in B \}$. Last, consider the action of $G$ on $\mathcal{M}(K)$ via $\mathcal{A}''' : G \times \mathcal{M}(K) \longrightarrow \mathcal{M}(K)$ defined by $\mathcal{A}'''(g,\mu)(\cdot):=\mu(\mathcal{A}''(g,\cdot))$. Given a set $S$ on which $G$ is acting via $\mathcal{T}$ and $Y \subset S$, let $Y^G := \{ y \in Y ~|~ \forall g \in G, ~ \mathcal{T}(g,y) = y \}$. If $f,g_1,\hdots,g_m \in \mathbb{R}[\bar{z},z]^G$, then:
\begin{equation}
\label{eq:momentinv}
\begin{array}{rcllll}
f^\text{opt} & = & \inf_{\mu \in \mathcal{M}(K)^G}  & \int_K f d\mu & \mathrm{s.t.} & \int_K d\mu = 1 ~~\&~~ \mu \geqslant 0.
\end{array}
\end{equation}
If $\mu$ is feasible for \eqref{eq:momentinv}, then $\int_K |p|^2 g_i d\mu \geqslant 0$ for all $d\in \mathbb{N}$ and  $p \in \mathbb{C}_d[z]$ such that $|p|^2 \in \Sigma_{d}[z]^G$. Given $A \in \mathcal{H}_d$, let  $A \succcurlyeq^G 0$ therefore denote $ \vec{p}^H A \vec{p} \geqslant 0$ for all $p \in \mathbb{C}_d[z]$ such that $|p|^2 \in \Sigma_{d}[z]^G$. This yields a \textit{G-invariant hierarchy} for all $d\geqslant d^\text{min}$
\begin{equation}
\label{eq:momentdinv}
\begin{array}{rclll}
\rho^G_d & := & \inf_{y\in \mathcal{H}_d} & L_y(f)                                          & \\
           &     & \text{s.t.}                          & y_{0,0} = 1,                                 &  \\
           &     &                                         &  M_{d-k_i}(g_iy) \succcurlyeq^G 0, & i = 0, \hdots, m,
\end{array}
\end{equation}
\begin{equation}
\label{eq:sosdinv}
\begin{array}{rcll}
(\rho^G_d)^* &:= & \sup_{\lambda,\sigma} & \lambda \\
              &    & \text{s.t.} &  f-\lambda = \sum_{i=0}^m \sigma_i g_i, \\ 
              &    &                &  \lambda \in \mathbb{R},~ \sigma_i \in \Sigma_{d-k_i}[z]^G,~ i=0, \hdots, m,
\end{array}
\end{equation}
whose convergence we now discuss. Assume that the first $2e$ ($e \in \mathbb{N}^*$) constraint functions $g_1,\hdots, g_m$ form equality constraints (i.e. $g_{2i-1} = - g_{2i} =: h_i , ~ i=1\hdots e$). Define $S:=\Sigma[z]^G + \sum_{i=1}^e \mathbb{R}[\bar{z},z]^G h_i$ (and $S:= \Sigma[z]^G$ if there are no equality constraints).
\begin{proposition}
\normalfont
\label{prop:invariance}
\textit{Assume that $f,g_1,\hdots,g_m \in \mathbb{R}[\bar{z},z]^G$ and that $\mathbb{R}[\bar{z},z]^G = S + \mathbb{R}$. If $f_{|K} > 0$, then there exists $\sigma_0, \hdots, \sigma_m \in \Sigma[z]^G$ such that $f = \sum_{i=0}^m \sigma_i g_i$.}
\end{proposition}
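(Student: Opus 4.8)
The plan is to transpose the strategy of~\cite{riener-2013}: first to produce an ordinary (non-invariant) certificate $f = \sum_{i=0}^m \sigma_i g_i$ with $\sigma_i \in \Sigma[z]$ via the non-invariant Positivstellensatz, and then to symmetrise it by averaging over the compact group $G$.

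\emph{Reducing to the non-invariant case.} Assuming, as in the applications (Propositions~\ref{prop:torus}--\ref{prop:subtorus}), that the action is linear, let $A_g$ denote the matrix of $z \mapsto \mathcal{A}(g,z)$, set $P := \int_G A_g^H A_g\,dg$ (normalised Haar measure) and $N(z) := z^H P z = \int_G \|A_g z\|^2\,dg$. Since each $A_g^H A_g$ is Hermitian positive semidefinite and $A_1 = I$, the matrix $P$ is Hermitian and positive definite; hence $N \in \mathbb{R}_1[\bar z, z]$ is $G$-invariant and, for $\mu > 0$ large enough, $\mu N - \sum_{k=1}^n |z_k|^2 = z^H(\mu P - I)z \in \Sigma_1[z]$. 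Now $-N \in \mathbb{R}[\bar z, z]^G = S + \mathbb{R}$, so there are $\sigma' \in \Sigma[z]^G$, $r_i' \in \mathbb{R}[\bar z, z]^G$ and $b \in \mathbb{R}$ with $-N = \sigma' + \sum_{i=1}^e r_i' h_i + b$. Substituting this into $\sum_k |z_k|^2 = \mu N - (\mu N - \sum_k|z_k|^2)$ and collecting terms produces an identity of the form $\sum_{i=1}^e r_i h_i = \sum_{k=1}^n |z_k|^2 + \sigma + a$ with $\sigma \in \Sigma[z]$, $r_i \in \mathbb{R}[\bar z, z]$, $a \in \mathbb{R}$, which is exactly~\eqref{eq:weak}. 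Hence the Putinar--Scheiderer weakening~\cite[Propositions 6.6 and 3.2 (iii)]{putinar-2013} of Theorem~\ref{th:angelo} applies and, together with the conversion $r = \sigma^+ - \sigma^-$ of~\cite[Proposition 1.2]{angelo-2010} for the equality pairs $g_{2i-1} = -g_{2i} = h_i$ (as in the proof of Theorem~\ref{th:angelo}), the hypothesis $f_{|K} > 0$ gives $\sigma_0,\hdots,\sigma_m \in \Sigma[z]$ with $f = \sum_{i=0}^m \sigma_i g_i$.

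\emph{Symmetrising.} Apply the Reynolds operator $R_G(\varphi) := \int_G \varphi(\mathcal{A}(g,z))\,dg$ to $f = \sum_{i=0}^m \sigma_i g_i$. As $f, g_0 = 1, g_1, \hdots, g_m \in \mathbb{R}[\bar z, z]^G$ and $\varphi \mapsto \varphi(\mathcal{A}(g,z))$ is a ring homomorphism, $R_G(f) = f$ and $R_G(\sigma_i g_i) = R_G(\sigma_i)\,g_i$, so $f = \sum_{i=0}^m R_G(\sigma_i)\,g_i$. Left-invariance of the Haar measure makes each $R_G(\sigma_i)$ $G$-invariant; and writing $\sigma_i = \sum_j |p_j|^2$ with $p_j \in \mathbb{C}_d[z]$, the map $g \mapsto \sigma_i(\mathcal{A}(g,z)) = \sum_j |p_j \circ A_g|^2$ is continuous from the compact group $G$ into the closed convex cone $\Sigma_d[z] = \{\varphi \in \mathbb{R}_d[\bar z, z] : \vec\varphi \succcurlyeq 0\}$, so its average $R_G(\sigma_i)$ stays in $\Sigma_d[z]$. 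Thus $R_G(\sigma_i) \in \Sigma[z]^G$, and renaming $R_G(\sigma_i)$ as $\sigma_i$ finishes the proof.

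\emph{The main obstacle} is the first step: turning the invariant, Archimedean-flavoured hypothesis $\mathbb{R}[\bar z, z]^G = S + \mathbb{R}$ into the precise algebraic certificate~\eqref{eq:weak} that the non-invariant Positivstellensatz demands. The key idea is that the $G$-symmetrised squared norm $N$ is an invariant polynomial dominating $\sum_k |z_k|^2$ as a quadratic form (its Gram matrix $P$ being positive definite), so an algebraic upper bound for $N$ --- which the hypothesis supplies since $-N$ is invariant --- transfers to an algebraic upper bound for $\sum_k |z_k|^2$. (For a merely polynomial action one would also need a uniform degree bound on $g \mapsto \mathcal{A}(g,\cdot)$ so that the Reynolds averages stay in a fixed finite-dimensional space.) The second step is routine, modulo the observation --- false in the real case, cf.\ footnote~\ref{fn:sos} --- that $\Sigma_d[z]$ is genuinely closed, being the preimage of the positive-semidefinite cone under the well-defined coefficient map, so that averaging sums of squared moduli again yields sums of squared moduli.
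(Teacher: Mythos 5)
Your proof is correct but takes a genuinely different route from the paper's. The paper's argument is short and entirely algebraic: it works inside the invariant algebra $\mathbb{R}[\bar{z},z]^G$, checks that $S$ is a semiring and that $M := S + \sum_{i>2e}\Sigma[z]^G g_i$ is an $S$-module, and then invokes the abstract representation theorem \cite[Theorem 2.6]{putinar-scheiderer-2012}, with $\mathbb{R}[\bar{z},z]^G = S + \mathbb{R}$ serving directly as the Archimedean hypothesis; no structural assumption on the action is needed beyond what makes $\mathcal{A}'$ act on polynomials. You instead produce a non-invariant certificate $f=\sum_{i}\sigma_i g_i$ from the weakened Positivstellensatz (via~\eqref{eq:weak}) and then symmetrise with a Reynolds average. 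The clever step is turning the invariant Archimedean hypothesis into~\eqref{eq:weak} by means of the invariant positive-definite form $N(z)=z^H P z$ with $P=\int_G A_g^H A_g\,dg$: since $\mu N - \sum_k|z_k|^2 \in \Sigma_1[z]$ for large $\mu$ while $-N\in S+\mathbb{R}$, the algebraic upper bound on $N$ transfers to one on $\sum_k|z_k|^2$. Both your reduction and your averaging step are sound, with the averaging relying (correctly) on the closedness and convexity of $\Sigma_d[z]$, which holds because the coefficient map $\varphi\mapsto\vec{\varphi}$ is well-defined in the complex setting. The price, which you acknowledge, is a genuine extra hypothesis: your argument requires the action to be linear (or at least of uniformly bounded degree) so that $P$ exists and the averages $R_G(\sigma_i)$ remain in a fixed finite-dimensional $\Sigma_d[z]$; the paper's proof does not. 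In exchange, your proof is more constructive, self-contained modulo the non-invariant Positivstellensatz, and makes very transparent how the hypothesis $\mathbb{R}[\bar{z},z]^G=S+\mathbb{R}$ is used. One tiny stylistic point: the positive-definiteness of $P$ is best justified not via ``$A_1=I$'' but by noting $A_g$ is invertible for all $g$, so $g\mapsto\|A_g z\|^2$ is continuous and strictly positive on the compact group $G$ whenever $z\neq 0$.
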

\begin{proof}
By definition of $\mathcal{A}'$, $\mathbb{R}[\bar{z},z]^G$ is an $\mathbb{R}$-algebra. As a result, $S$ is a semiring of $\mathbb{R}[\bar{z},z]^G$ (i.e. contains $\mathbb{R}_+$ and is closed in $\mathbb{R}[\bar{z},z]^G$ under taking sums and products) and $M:= S + \sum_{i=2e+1}^m \Sigma[z]^G g_i$ is an $S$-module of $\mathbb{R}[\bar{z},z]^G$ (i.e. contains 1 and satisfies $M+M \subset M$ and $SM \subset M$). The conclusion then follows from \cite[Theorem 2.6]{putinar-scheiderer-2012}.
\end{proof}
\begin{proposition}
\normalfont
\label{prop:torus}
\textit{The torus $G=\mathbb{T}$ in $\mathbb{C}$ with the action $\mathcal{A}(g,z) := gz$ satisfies $\rho^{\mathbb{T}}_d = \rho_d$ and $(\rho^{\mathbb{T}}_d)^* = \rho_d^*$ for all  $d\geqslant d^\text{min}$ if $f,g_1,\hdots,g_m \in \mathbb{R}[\bar{z},z]^{\mathbb{T}}$.}
\end{proposition}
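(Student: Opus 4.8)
The plan is to prove both equalities by a group-averaging (symmetrization) argument, using the fact that the torus acts on monomials in a particularly transparent way. The basic observation is that for $g\in\mathbb{T}$ the action $\mathcal{A}(g,z)=gz$ sends $\bar z^\alpha z^\beta$ to $\overline{(gz)}^\alpha(gz)^\beta=g^{|\beta|-|\alpha|}\bar z^\alpha z^\beta$ (since $\bar g=g^{-1}$). Hence a polynomial $\varphi=\sum\varphi_{\alpha,\beta}\bar z^\alpha z^\beta$ lies in $\mathbb{R}[\bar z,z]^{\mathbb{T}}$ exactly when $\varphi_{\alpha,\beta}=0$ for all $|\alpha|\neq|\beta|$, and for $p\in\mathbb{C}_d[z]$ one has $|p|^2\in\Sigma_d[z]^{\mathbb{T}}$ exactly when $p$ is homogeneous (of some degree $\leqslant d$). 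Let $\nu$ be the normalized Haar measure on $\mathbb{T}$, so $\int_{\mathbb{T}}g^k\,d\nu(g)$ equals $1$ if $k=0$ and $0$ otherwise.

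The two ``easy'' inequalities are immediate. Since $M\succcurlyeq 0$ implies $M\succcurlyeq^{\mathbb{T}}0$ (the latter tests against fewer vectors), every feasible point of \eqref{eq:momentd} is feasible for \eqref{eq:momentdinv}, so $\rho_d^{\mathbb{T}}\leqslant\rho_d$; and since $\Sigma_{d-k_i}[z]^{\mathbb{T}}\subseteq\Sigma_{d-k_i}[z]$, every feasible point of \eqref{eq:sosdinv} is feasible for \eqref{eq:sosd}, so $(\rho^{\mathbb{T}}_d)^*\leqslant\rho_d^*$.

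For the reverse primal inequality I would take $y\in\mathcal{H}_d$ feasible for \eqref{eq:momentdinv} and define $\tilde y$ by $\tilde y_{\alpha,\beta}:=\int_{\mathbb{T}}g^{|\beta|-|\alpha|}y_{\alpha,\beta}\,d\nu(g)$, i.e.\ $\tilde y_{\alpha,\beta}=y_{\alpha,\beta}$ when $|\alpha|=|\beta|$ and $\tilde y_{\alpha,\beta}=0$ otherwise (this is the moment sequence of the $\mathbb{T}$-averaged measure when $y$ admits one). Because $f$ is $\mathbb{T}$-invariant, only its $|\alpha|=|\beta|$ coefficients are nonzero, so $L_{\tilde y}(f)=L_y(f)$ and $\tilde y_{0,0}=y_{0,0}=1$. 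For each $i$, since $g_i\in\mathbb{R}[\bar z,z]^{\mathbb{T}}$ the matrix $M_{d-k_i}(g_i\tilde y)$ has $(\alpha,\beta)$-entry $\sum_{|\gamma|=|\delta|}g_{i,\gamma,\delta}\,\tilde y_{\alpha+\gamma,\beta+\delta}$, which vanishes unless $|\alpha|=|\beta|$ and, when $|\alpha|=|\beta|$, equals $M_{d-k_i}(g_iy)(\alpha,\beta)$. Thus $M_{d-k_i}(g_i\tilde y)$ is block diagonal along the partition of $\{\alpha:|\alpha|\leqslant d-k_i\}$ by total degree, and its block at degree $e$ is the degree-$e$ block of $M_{d-k_i}(g_iy)$; positive semidefiniteness of that block is precisely $\vec{p}^H M_{d-k_i}(g_iy)\vec{p}\geqslant 0$ for $p$ homogeneous of degree $e$, a special case of $M_{d-k_i}(g_iy)\succcurlyeq^{\mathbb{T}}0$. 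Hence $M_{d-k_i}(g_i\tilde y)\succcurlyeq 0$ for all $i$, so $\tilde y$ is feasible for \eqref{eq:momentd} with the same objective, giving $\rho_d\leqslant\rho_d^{\mathbb{T}}$.

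For the reverse dual inequality I would take a feasible $(\lambda,\sigma_0,\ldots,\sigma_m)$ for \eqref{eq:sosd} with $\sigma_i=\sum_{j=1}^{r_i}|p_j^i|^2$, write each $p_j^i=\sum_{e=0}^{d-k_i}q_{j,e}^i$ as the sum of its homogeneous parts, and set $\tilde\sigma_i(z):=\int_{\mathbb{T}}\sigma_i(gz)\,d\nu(g)$. Expanding $|p_j^i(gz)|^2=\sum_{e,e'}g^{e-e'}q_{j,e}^i(z)\overline{q_{j,e'}^i(z)}$ and integrating over $\mathbb{T}$ gives $\tilde\sigma_i=\sum_{j,e}|q_{j,e}^i|^2\in\Sigma_{d-k_i}[z]^{\mathbb{T}}$. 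Averaging the identity $f-\lambda=\sum_i\sigma_ig_i$ over $\mathbb{T}$ and using that $f$ and the $g_i$ are $\mathbb{T}$-invariant (so $g_i(gz)=g_i(z)$ factors out of the integral) yields $f-\lambda=\sum_i\tilde\sigma_ig_i$, so $(\lambda,\tilde\sigma_0,\ldots,\tilde\sigma_m)$ is feasible for \eqref{eq:sosdinv} and $\rho_d^*\leqslant(\rho^{\mathbb{T}}_d)^*$. The step requiring the most care is the block-diagonal identification in the primal argument: one must verify that restricting the relation $\succcurlyeq^{\mathbb{T}}0$ to degree-homogeneous test vectors recovers exactly the diagonal blocks of $M_{d-k_i}(g_i\tilde y)$, which is where $\mathbb{T}$-invariance of each $g_i$ is used.
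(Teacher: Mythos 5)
Your proof is correct and takes essentially the same approach as the paper's: the paper also characterizes $\mathbb{R}[\bar z,z]^{\mathbb{T}}$ by $\varphi_{\alpha,\beta}=0$ for $|\alpha|\neq|\beta|$, passes from an SOS certificate $(\lambda,\sigma)$ to $(\lambda,\tilde\sigma)$ by projecting each $\sigma_i$ onto its invariant (homogeneous-split) part, and passes from an invariant-feasible $y$ to $\tilde y=(y_{\alpha,\beta}\,\delta_{|\alpha|=|\beta|})$. Your Haar-average formulation and the explicit block-diagonal justification of $M_{d-k_i}(g_i\tilde y)\succcurlyeq 0$ are just more detailed renderings of the same steps the paper states compactly.
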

\begin{proof}
Firstly, $\varphi \in \mathbb{R}[\bar{z},z]^{\mathbb{T}}$ if and only if $\forall \alpha,\beta \in \mathbb{N}^n, ~ |\alpha-\beta|\varphi_{\alpha,\beta} = 0$. Indeed, for all $\theta \in \mathbb{R}$ and $z\in \mathbb{C}^n$, $\varphi(e^{\textbf{i}\theta} z) = \sum_{\alpha,\beta \in \mathbb{N}^n} \varphi_{\alpha,\beta} \overline{(e^{\textbf{i}\theta} z)}^\alpha (e^{\textbf{i}\theta} z)^\beta = \sum_{\alpha,\beta \in \mathbb{N}^n} \varphi_{\alpha,\beta} e^{\textbf{i}(|\beta|-|\alpha|)\theta}$ \\ $\bar{z}^\alpha z^\beta$ is equal to $\varphi(z)$ if and only if $\varphi_{\alpha,\beta} = 0$ or $|\beta-\alpha|\theta \equiv 0 [2\pi]$ (i.e. $|\beta-\alpha|=0$). Secondly, if $\sigma \in \Sigma[z]$, i.e. $\sigma = \sum_{j=1}^r |p_j|^2$, then $\sum_{|\alpha|=|\beta|} \sigma_{\alpha,\beta} \bar{z}^\alpha z^\beta = \sum_{l \in \mathbb{N}} \sum_{j=1}^r |\sum_{|\alpha|=l} p_{j,\alpha}z^\alpha |^2 \in \Sigma[z]^\mathbb{T}$. Thirdly, if $(\lambda,\sigma_0,\hdots,\sigma_m)$ is feasible for \eqref{eq:sosd}, then $(\lambda,\sum_{|\alpha|=|\beta|} \sigma_{0,\alpha,\beta} \bar{z}^\alpha z^\beta,\hdots,\sum_{|\alpha|=|\beta|} \sigma_{m,\alpha,\beta} \bar{z}^\alpha z^\beta)$ is feasible for \eqref{eq:sosdinv}. Thus $(\rho^{\mathbb{T}}_d)^* = \rho_d^*$. Lastly, if $y$ is feasible for \eqref{eq:momentdinv}, then $(y_{\alpha,\beta} \delta_{|\alpha|=|\beta|})_{|\alpha|,|\beta|\leqslant d}$ is feasible for \eqref{eq:momentd} (where $\delta$ is the Kronecker symbol). Hence $\rho^{\mathbb{T}}_d = \rho_d$.
\end{proof}

If $f,g_1,\hdots,g_m \in \mathbb{R}[\bar{z},z]^{\mathbb{T}}$, then the minimum order $d^\text{min}$ of the complex hierarchy, i.e. $\max \{ |\alpha| , |\beta| ~\text{s.t.}~ |f_{\alpha,\beta}| + |g_{1,\alpha,\beta}| + \hdots + |g_{m,\alpha,\beta}| \neq 0 \}$, is equal to that of the real hierarchy, i.e. $\max \{ \lceil (|\alpha| + |\beta|) / 2 \rceil ~\text{s.t.}~ |f_{\alpha,\beta}| + |g_{1,\alpha,\beta}| + \hdots + |g_{m,\alpha,\beta}| \neq 0 \}$, where $\lceil . \rceil$ denotes the ceiling of a real number.
\begin{proposition}
\normalfont
\label{prop:subtorus}
\textit{The subgroup $G = \{-1,1\}$ of $\mathbb{T}$ with $\mathcal{A}(g,z) := gz$ satisfies $\rho^{\{-1,1\}}_d = \rho_d$ and $(\rho^{\{-1,1\}}_d)^* = \rho_d^*$ for all $d\geqslant d^\text{min}$ if $f,g_1,\hdots,g_m \in \mathbb{R}[\bar{z},z]^{\{-1,1\}}$.}
\end{proposition}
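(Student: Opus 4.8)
The plan is to imitate the proof of Proposition~\ref{prop:torus}, replacing the grading ``$|\alpha|=|\beta|$'' coming from the full torus by the $\mathbb Z/2\mathbb Z$-grading ``$|\alpha|+|\beta|$ even/odd''. First I would identify the invariant objects: since $\mathcal A(-1,z)=-z$, one has $\varphi(-z)=\sum_{\alpha,\beta}(-1)^{|\alpha|+|\beta|}\varphi_{\alpha,\beta}\bar z^\alpha z^\beta$ for $\varphi\in\mathbb R[\bar z,z]$, so $\varphi\in\mathbb R[\bar z,z]^{\{-1,1\}}$ if and only if $\varphi_{\alpha,\beta}=0$ whenever $|\alpha|+|\beta|$ is odd; in particular $f$ and all the $g_i$ are supported on monomials of even total degree. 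The second ingredient I would record is: writing any $p\in\mathbb C_l[z]$ as $p=p^{\mathrm{even}}+p^{\mathrm{odd}}$ (its even- and odd-degree parts), the projection of $|p|^2$ onto its even-total-degree monomials equals $|p^{\mathrm{even}}|^2+|p^{\mathrm{odd}}|^2\in\Sigma_l[z]^{\{-1,1\}}$; conversely both even holomorphic polynomials (for which $p(-z)=p(z)$) and odd holomorphic polynomials (for which $p(-z)=-p(z)$) satisfy $|p|^2\in\Sigma[z]^{\{-1,1\}}$, so the test class defining $\succcurlyeq^{\{-1,1\}}$ already contains all purely even and all purely odd $p$.

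For the sum-of-squares equality, I would start from a feasible point $(\lambda,\sigma_0,\hdots,\sigma_m)$ of \eqref{eq:sosd} and replace each $\sigma_i$ by its even-total-degree part $\tilde\sigma_i:=\sum_{|\alpha|+|\beta|\ \mathrm{even}}\sigma_{i,\alpha,\beta}\bar z^\alpha z^\beta$, which lies in $\Sigma_{d-k_i}[z]^{\{-1,1\}}$ by the previous paragraph. Applying the ($\mathbb R$-linear) projection onto even total degree to the identity $f-\lambda=\sum_{i=0}^m\sigma_i g_i$ and using that $f$, $\lambda$ and the $g_i$ are already supported on even total degree gives $f-\lambda=\sum_{i=0}^m\tilde\sigma_i g_i$, so $(\lambda,\tilde\sigma_0,\hdots,\tilde\sigma_m)$ is feasible for \eqref{eq:sosdinv}. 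Together with the trivial inclusion of feasible sets coming from $\Sigma_{d-k_i}[z]^{\{-1,1\}}\subset\Sigma_{d-k_i}[z]$, this yields $(\rho^{\{-1,1\}}_d)^*=\rho_d^*$.

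For the moment equality, I would start from $y$ feasible for \eqref{eq:momentdinv} and set $y'_{\alpha,\beta}:=y_{\alpha,\beta}$ when $|\alpha|+|\beta|$ is even and $y'_{\alpha,\beta}:=0$ otherwise; then $y'\in\mathcal H_d$, $y'_{0,0}=1$, and $L_{y'}(f)=L_y(f)$ because $f$ is supported on even total degree. Since each $g_i$ is likewise supported on even total degree, $M_{d-k_i}(g_iy')(\alpha,\beta)$ coincides with $M_{d-k_i}(g_iy)(\alpha,\beta)$ when $|\alpha|+|\beta|$ is even and vanishes when it is odd; hence, splitting the index set $\{|\alpha|\leqslant d-k_i\}$ according to the parity of $|\alpha|$, the matrix $M_{d-k_i}(g_iy')$ is block-diagonal with its two diagonal blocks equal to the corresponding principal submatrices of $M_{d-k_i}(g_iy)$. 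Each block is positive semidefinite: testing $M_{d-k_i}(g_iy)\succcurlyeq^{\{-1,1\}}0$ against even holomorphic $p$ gives the even block $\succcurlyeq0$, against odd holomorphic $p$ the odd block. Thus $M_{d-k_i}(g_iy')\succcurlyeq0$ for all $i$, so $y'$ is feasible for \eqref{eq:momentd}; combined with the reverse inequality (because $\succcurlyeq^{\{-1,1\}}$ is weaker than $\succcurlyeq$, the feasible set of \eqref{eq:momentdinv} contains that of \eqref{eq:momentd}), this gives $\rho^{\{-1,1\}}_d=\rho_d$.

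I expect the delicate step to be the moment side: one must verify that zeroing out the odd-total-degree entries genuinely block-diagonalizes every localizing matrix (which relies on the invariance of the $g_i$) and, more importantly, that the hypothesis $M_{d-k_i}(g_iy)\succcurlyeq^{\{-1,1\}}0$---which a priori only controls $\vec p^HM_{d-k_i}(g_iy)\vec p$ for the restricted class of $p$ with $|p|^2$ invariant---is strong enough to force ordinary positive semidefiniteness of the two diagonal blocks. This works precisely because, for $G=\{-1,1\}$, that restricted class already contains every purely even and every purely odd holomorphic polynomial, so each diagonal block can be tested on its own.
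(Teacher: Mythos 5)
Your proof is correct and follows the same strategy as the paper's: characterize $\{-1,1\}$-invariance as vanishing of coefficients in odd total degree, split holomorphic polynomials into even and odd parts to see that the even projection of a sum of squares is again a sum of squares, project the SOS certificate, and project the moment sequence. The only substantive difference is that you spell out why the projected moment sequence $y'$ remains feasible — via the block-diagonal structure of the localizing matrices and the observation that $\succcurlyeq^{\{-1,1\}}$ already tests against every purely even and every purely odd holomorphic polynomial — whereas the paper simply asserts this last feasibility without elaboration.
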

\begin{proof}
Firstly, $\varphi \in \mathbb{R}[\bar{z},z]^{\{-1,1\}}$ if and only if $\forall |\alpha+\beta|~\text{odd}, ~ \varphi_{\alpha,\beta} = 0$. Secondly, if $\sigma \in \Sigma[z]$, i.e. $\sigma = \sum_{j=1}^r |p_j|^2$, then $\sum_{|\alpha+\beta|\text{even}} \sigma_{\alpha,\beta} \bar{z}^\alpha z^\beta = \sum_{j=1}^r |\sum_{|\alpha|\text{even}} p_{j,\alpha} z^\alpha|^2 + |\sum_{|\alpha|\text{odd}} p_{j,\alpha} z^\alpha|^2 \in \Sigma[z]^{\{-1,1\}}$. Thirdly, if $(\lambda,\sigma_0,\hdots,\sigma_m)$ is feasible for \eqref{eq:sosd}, then $(\lambda,\sum_{|\alpha+\beta|\text{even}} \sigma_{0,\alpha,\beta} \bar{z}^\alpha z^\beta,\hdots,\sum_{|\alpha+\beta|\text{even}} \sigma_{m,\alpha,\beta} \bar{z}^\alpha z^\beta)$ is feasible for \eqref{eq:sosdinv}. Lastly, if $y$ is feasible for \eqref{eq:momentdinv}, then $(y_{\alpha,\beta} \delta_{|\alpha+\beta|\text{even}})_{|\alpha|,|\beta|\leqslant d}$ is feasible for \eqref{eq:momentd}.
\end{proof}

A problem with $\mathbb{T}$-invariance in complex numbers converts in real numbers to a problem with $\{-1,1\}$-invariance. If $\sigma \in \Sigma_d[z]^\mathbb{T}$, then $(\sigma_{\alpha,\beta})_{|\alpha|,|\beta|\leqslant d}$ has a $(d+1)$-block-diagonal structure, whereas if $\sigma \in \Sigma_d[x,y]^{\{-1,1\}}$ with $z=:x+\textbf{i}y$, then it has a 2-block-diagonal structure (after permutation) whose 2 blocks are much bigger.

\subsection{Multi-Ordered Relaxation}
\label{subsec:Multi-Ordered Relaxation}
We generalize and transpose to complex numbers the work in \cite{mh_sparse_msdp}. The idea is to associate a relaxation order to each constraint. In addition, we consider the coupling of the variables induced by the monomials present in the optimization problem, to which we add the coupling of the variables induced by only some constraints (those with a ``high-order''). For instance, the coupling induced by the mononials in $g_1(z_1,z_2,z_3) := \Re\{ z_1(z_2+z_3)\} \geqslant 0$ is $\{(1,2),(2,3)\}$, while the coupling induced by the constraint is $\{(1,2),(1,3),(2,3)\}$.

Given $\alpha \in \mathbb{N}^n$ with $n>1$, let $\text{supp}(\alpha) := \{ 1 \leqslant s \leqslant n ~|~ \alpha_s \neq 0 \}$. Consider the coupling induced by monomials defined by $\mathcal{E}^\text{mono} := \{ (l,m) ~|~ l \neq m ~\text{s.t.} ~ \exists \alpha,\beta \in \mathbb{N}^n ~\text{s.t.}~ \{l,m\} \subset \text{supp}(\alpha+\beta) ~\text{and}~ |f_{\alpha,\beta}| + |g_{1,\alpha,\beta}| + \hdots + |g_{m,\alpha,\beta}| \neq 0 \}$. 
Given $I \subset \{1,\hdots,n\}$, let $z(I) := \{ z_{i} ~|~ i \in I \}$ if $I \neq \emptyset$, else $z(C) := 1$.
Given $y \in \mathcal{H}$, $d \in \mathbb{N}$, and $ \varphi \in \mathbb{R}[ \overline{z(I)} , z(I) ]$, let $M_{d}(\varphi y,I) := ( ~ M_{d}(\varphi y)(\alpha,\beta ) ~ )_{\text{supp}(\alpha),\text{supp}(\beta) \subset I}$.
Let $G_1,\hdots,G_m \subset \{1,\hdots,n\}$ denote the minimal sets in terms of inclusion such that $(g_1,\hdots, g_m) \in \mathbb{R}_{k_1} [ \overline{z(G_1)} , z(G_1) ] \times \hdots \times \mathbb{R}_{k_m} [ \overline{z(G_m)} , z(G_m) ]$.

Let $(d_1,\hdots,d_m) \in \mathbb{N}^m$ be such that $d_i - k_i \geqslant 0$ for all $1 \leqslant i \leqslant m$. Consider the coupling induced by monomials and high-order constraints defined by $\mathcal{E}^\text{con} := \mathcal{E}^\text{mono} ~\cup~ \bigcup_{d_i>k_i} \{ (l,m) ~|~ l \neq m ~\text{s.t.} ~ \{l,m\} \subset G_i \}$. Let $C_1,\hdots, C_p \subset \{1,\hdots,m\}$ denote the maximal cliques of a chordal extension of $(\{1, \hdots , n \}, \mathcal{E}^\text{con})$. Given $1\leqslant i \leqslant m$, let $I_i := \cup_{l \in L} C_l$ where $L \in \text{argmin} \{ \sum_{l\in L} |C_l| ~|~ G_i \subset \cup_{l\in L} C_l,~ L \subset \{1,\hdots,m\}~ \}$. For $i$ such that $d_i>k_i$, $L$ is a singleton due to the definition of $\mathcal{E}^\text{con}$. 
Define $(d^\text{cl}_1,\hdots,d^\text{cl}_p) \in \mathbb{N}^p$ such that $d^\text{cl}_l := \text{min} \{ d_1, \hdots, d_m \}$ if $C_l \neq I_i$ for all $1\leqslant i \leqslant m$; if not, let $d^\text{cl}_l := \text{max} \{ d_i ~|~ I_i = C_l\}$. Define the relaxation of order $(d_1,\hdots,d_m)$ by
\begin{equation}
\label{eq:momentm}
\boxed{
\begin{array}{rlll}
\rho_{d_1,\hdots,d_m} := & \inf_{y\in \mathcal{H}} & L_y(f)                                          & \\
               & \text{s.t.}                          & y_{0,0} = 1,                                 &  \\
                &                                         &  M_{d^\text{cl}_l}(y,C_l) \succcurlyeq 0, & l = 1, \hdots, p,\\
                &                                         &  M_{d_i-k_i}(g_iy,I_i) \succcurlyeq 0, & i=1,\hdots,m,

\end{array}
}
\end{equation}
\begin{equation}
\label{eq:sosm}
\boxed{
\begin{array}{rll}
\rho_{d_1,\hdots,d_m}^* := & \sup_{\lambda,\sigma} & \lambda \\
                  & \text{s.t.} &  f-\lambda = \sum_{l=1}^p \left(\sigma_{0,l} + \sum_{C_l \subset I_i} \sigma_{i} g_i\right), \\ 
                  &                &  \sigma_{0,l} \in \Sigma_{d^\text{cl}_l}[z(C_l)], ~ l=1,\hdots,p,\\
                  &                & \sigma_{i} \in \Sigma_{d_i-k_i}[z(I_i)], ~ i=1,\hdots,m.

\end{array}
}
\end{equation}

\subsection{Multi-Ordered Hierarchy}
\label{subsec:Multi-Ordered Hierarchy}
Given $H: \mathbb{N} \longrightarrow [k_1,+\infty[ \times \hdots \times [k_m,+\infty[$ such that $\min_{d \rightarrow +\infty} H(d) = + \infty$, consider the sequence indexed by $d \in \mathbb{N}$ of relaxations of order $H(d)$. We refer to such a sequence as \textit{multi-ordered hierarchy}. The uniform case where $H(d) := (d+d^\text{min},\hdots,d+d^\text{min})$ is a special case of the sparse real hierarchy of~\cite{waki-2006} when transposed to complex numbers. The hierarchy of~\cite{waki-2006} converges to the global value of a real polynomial optimization problem if a ball constraint is added for each clique of a chordal extension of the sparsity pattern~\cite[equation (2.29)]{lasserre-2010}. The same holds in the complex case if a slack variable and redundant sphere constraint is added for each clique. The proof is the same as in the real case~\cite[Lemma B.13 and 4.10.2 Proof of Theorem 4.7]{lasserre-2010} once the real vector spaces on which measures are defined are replaced by complex vector spaces. (For other proofs of~\cite[Theorems 2.28 and 4.7]{lasserre-2010}, see~\cite{schweighofer-2007} and~\cite{kuhlmann-2007}.) For $d \in \mathbb{N}$ great enough, i.e. once $d_i>k_i$ for all $1\leqslant i \leqslant m$, the relaxation of order $H(d) = (d_1,\hdots,d_m)$ is at least as tight as the complex sparse relaxation of~\cite{waki-2006} of order $\min H(d)$. Any multi-ordered hierarchy thus globally converges (if a slack variable and sphere constraint is added for each clique).

\subsection{Example of Multi-Ordered Hierarchy: the Mismatch Hierarchy}
\label{subsec:Example of Multi-Ordered Hierarchy: the Mismatch Hierarchy}
Conceptually, the \textit{mismatch hierarchy} is defined by the following procedure. Until a measure can be extracted from a solution $y$ to the multi-ordered relaxation,
\begin{enumerate}
\item compute a solution $y$ to the moment relaxation of order $(d_1,\hdots,d_m)$;
\item find a closest measure $\mu$ to $y$ not necessarily supported on $K$:
\begin{equation}
\underset{\mu ~ \text{Dirac}}{\arg\min} ~ \left\| \left( y_{\alpha,\beta} - \int_{\mathbb{C}^n} \bar{z}^\alpha z^\beta d\mu \right)_{|\alpha|,|\beta|=1} \right\|_{\mathbb{F}}
\end{equation}
\item increment $d_i = d_i+1$ at the highest mismatch, that is to say:
\begin{equation}
\underset{1\leqslant i \leqslant m}{\arg\max} ~ \left| \sum_{\alpha,\beta} g_{i,\alpha,\beta} \left( y_{\alpha,\beta} - \int_{\mathbb{C}^n} \bar{z}^\alpha z^\beta d\mu \right) \right|.
\end{equation}
\end{enumerate}
Stricly speaking, we refer to the \textit{mismatch hierarchy} as the following recursively defined multi-ordered hierarchy $H$. It depends on 3 parameters: a mismatch tolerance $\epsilon > 0$; the number $h \in \mathbb{N}^*$ of highest mismatches considered at each iteration; and an upper bound $\Delta^{\text{max}}_{\text{min}}$ on the difference between maximum and minimum orders, i.e. $\{ \max H(d) - \min H(d) ~|~ d \in \mathbb{N} \}$.

Initialize by $H(0) := k_1 \times \hdots \times k_m$ and let's define $H(d+1)$ in function of $H(d)$. We distinguish two cases. Case 1: if there exists no solution to the moment relaxation of order $H(d)$, then let $H(d+1) := H(d) + (1,\hdots,1)$. Case 2: if not, consider a solution $y$. For $1\leqslant l \leqslant p$, consider some complex numbers $(u(l)_j)_{j \in C_l}$ such that $\overline{u(l)} ~ \overline{u(l)}^H$ is the closest rank 1 matrix to $y(l) := (y_{\alpha,\beta})^{|\alpha|=|\beta|=1}_{\text{supp}(\alpha),\text{supp}(\beta) \subset C_l}$ with respect to the Frobenius norm. Let $\lambda_1(l) \geqslant \lambda_2(l) \geqslant 0$ respectively denote the first and second largest eigenvalues of $y(l)$. Let $\theta \in \mathbb{R}^p$ be a minimizer of $\sum_{l,m=1}^p \sum_{j \in C_l \cap C_m} (\arg u(l)_j + \theta_l - \arg u(m)_j - \theta_m )^2~\text{s.t.}~ \theta \in [ 0 , 2\pi ]^p$. Let $z \in \mathbb{C}^n$ be a minimizer of $\sum_{\lambda_2(l) \neq 0} \lambda_1(l) / \lambda_2(l) \| z(C_l) - u(l) e^{\textbf{i}\theta_l} \|_2^2 + 2 \max \{ \lambda_1(l) / \lambda_2(l) ~|~ \lambda_2(l) \neq 0 \} \times \sum_{\lambda_2(l) = 0} \| z(C_l) - u(l)e^{\textbf{i}\theta_l} \|_2^2$.
We distinguish 3 cases:
\begin{itemize}
\item Case 2.1: $\mathcal{M} := \{ 1 \leqslant i \leqslant m ~|~ |L_y(g_i)-g_i(z)|> \epsilon ~\text{and}~ H_i(d) < \max H(d) \} \neq \emptyset$
\item Case 2.2: $\mathcal{M} = \emptyset$ and $\mathcal{M}' := \{ i \in \mathcal{S} ~|~ |L_y(g_i)-g_i(z)|> \epsilon \} \neq \emptyset $
\item Case 2.3: $ \mathcal{M} = \mathcal{M}' = \emptyset$
\end{itemize}
In Case 2.1, let $H_j(d+1) := H_j(d)+1$ if $I_j \subset I_i$ and $i$ has one of the $h$ highest mismatches $|L_y(g_i)-g_i(z)|$ among $i \in \mathcal{M}$. For all other $1 \leqslant j \leqslant m$, let $H_{j}(d+1) := H_{j}(d)$ unless the bound $\Delta^{\text{max}}_{\text{min}}$ is violated, in which case for all $1 \leqslant j \leqslant m$ such that $H_j(d) = \min H(d)$, let $H_{j}(d+1) := H_{j}(d)+1$. In Case 2.2, apply instructions of Case 2.1 where $\mathcal{M}$ is replaced by $\mathcal{M}'$. In Case 2.3, let $H(d+1) := H(d) + (1,\hdots,1)$. Observe that $\min H(d) \geqslant \max H(d) - \Delta^{\text{max}}_{\text{min}} \rightarrow +\infty$ as $d\rightarrow +\infty$.

\section{Application to Electric Power Systems}
\label{sec:Application to Electric Power Systems}

The optimal power flow is a central problem in power systems introduced half a century ago in~\cite{carpentier-1962}. 
While many non-linear methods~\cite{murillosanchez-thomas-zimmerman-2011,castillo-2013} have been developed to solve this difficult problem, there is a strong motivation for producing more reliable tools. 
Since 2006, the ability of the Shor and second-order conic relaxations to find global solutions~\cite{jabr2006, bai-fujisawa-wang-wei-2008, low_tutorial, andersen2014, coffrin2015, taylor2015, dan2015} has been studied. Some relaxations are presented in real numbers~\cite{lavaei-low-2012, dan2013} and some in complex numbers~\cite{zhang2013, bose2014, bose-chandy-gayme-low-2015}. However, in all numerical applications, standard solvers such as SeDuMi, SDPT3, and MOSEK are used which currently handle only real numbers. Modeling languages such as YALMIP and CVX do handle inputs in complex numbers, but the data is transformed into real numbers before calling the solver~\cite[Example 4.42]{boyd2009}.
We use the European network to illustrate that it is beneficial to relax non-convex constraints before converting from complex to real numbers. 

\subsection{Optimal Power Flow} 
\label{subsec:Optimal Power Flow} 
A transmission network can be modeled using an undirected graph $\mathcal{G} = ( \mathcal{B} , \mathcal{L} )$ where buses $\mathcal{B} = \{1,\hdots n\}$ are linked to one another via lines $\mathcal{L} \subset \mathcal{B} \times \mathcal{B}$. 
Power flows are governed by the admittance matrix $Y \in \mathbb{C}^{n \times n}$ whose extra diagonal terms $(l,m)\in\mathcal{L}$ are equal to $y_{lm}/(\rho_{ml} \rho_{lm}^H)$ and whose diagonal terms $(l,l)$ are equal to $\sum_{(l,m) \in \mathcal{L}} (y_{lm} + y_{lm}^\text{gr})/|\rho_{lm}|^2$. All others terms are equal to zero. Here, $y_{lm} \in \mathbb{C}$ denotes the mutual admittance between
buses $(l,m)\in \mathcal{L}$, $y^\text{gr}_{lm} \in \mathbb{C}$ denotes the
admittance-to-ground at end $l$ of line $(l,m)\in \mathcal{L}$, and $\rho_{lm} \in \mathbb{C}$ denotes the ratio of the
ideal phase-shifting transformer at end $l$ of line $(l,m) \in
\mathcal{L}$.

Each bus injects power $p_k^\text{gen}+ \mathbf{i}q_k^\text{gen}$ into the network with capacity limits $p_k^\text{min},p_k^\text{max},$ $q_k^\text{min},q_k^\text{max}$ (potentially all equal to 0) and extracts power demand $p_k^\text{dem}+ \mathbf{i}q_k^\text{dem}$ from the network. Each bus operates at a voltage $v_k  \in \mathbb{C}$. Finding power flows that minimize active power loss is a problem that can be cast as an instance of \qcqp{}:
\begin{gather}
\inf_{v \in \mathbb{C}^n}~ v^H \frac{Y^H+Y}{2} v \label{eq:opf1} \\
\label{eq:opfP} \text{s.t.} ~~~
\forall k \in \mathcal{B},~~~ p_k^{\text{min}} - p^\text{dem}_k \leqslant v^H H_k v \leqslant p_k^{\text{max}} - p^\text{dem}_k, ~~~~~~
\\ \label{eq:opfQ}
\forall k \in \mathcal{B},~~~ q_k^{\text{min}} - q^\text{dem}_k \leqslant v^H \tilde{H}_k v \leqslant q_k^{\text{max}} - q^\text{dem}_k,
\\
\forall k \in \mathcal{B},~~~ (v_k^{\text{min}})^2 \leqslant v^H e_k e_k^T v \leqslant (v_k^{\text{max}})^2, \label{eq:opf2}
\end{gather}
where $H_k := \frac{Y^H e_k e_k^T + e_k e_k^T Y}{2}$ and $\tilde{H}_k := \frac{Y^H e_k e_k^T - e_k e_k^T Y}{2\textbf{i}}$ are Hermitian and $e_k$ is the $k^{th}$ column of the identity matrix.
In Section~\ref{subsec:Numerical Results}, power flows are computed that seek to minimize either power loss or generation costs $\sum_{k\in \mathcal{B}} a_k (p_k^\text{gen})^2 + b_k p_k^\text{gen} + c_k$ where $a_k,b_k,c_k \in \mathbb{R}$, $a_k \geqslant 0$, and $p_k^\text{gen} =  v^H H_k v+p_k^\text{dem}$. In the case of generation costs, new real variables $(t_k)_{k \in \mathcal{B}}$ are introduced, objective \eqref{eq:opf1} is replaced by $\sum_{k\in\mathcal{B}} t_k$, and new constraints are added for all $k \in \mathcal{B}$: $a_k (v^H H_k v+p_k^\text{dem})^2 + b_k (v^H H_k v+p_k^\text{dem}) + c_k \leqslant t_k$.
In Section~\ref{subsec:Numerical Results} apparent power flow limits $|v_l i_{lm}^H| \leqslant s_{lm}^\text{max}$ are enforced where $v_l i_{lm}^H = v^H F_{lm} v$ and $F_{lm} := a_{lm}^H e_l e_l^T + b_{lm}^H e_m e_l^T$, with $a_{lm} := (y_{lm} + y_{lm}^\text{gr})/|\rho_{lm}|^2$ and $b_{lm} := -y_{lm}/(\rho_{ml} \rho_{lm}^H)$. These can be written for all $(l,m) \in \mathcal{L}$: $(v^H\frac{F_{lm} + F_{lm}^H}{2} v )^2 + (v^H \frac{F_{lm} - F_{lm}^H}{2\textbf{i}} v )^2 \leqslant (s_{lm}^\text{max})^2$.
Note that generation cost and line flow constraints yield second-order conic constraints for all the relaxations considered in this paper as well as semidefinite constraints for higher orders of the moment/sum-of-squares hierarchies. The optimal power flow problem is invariant under the action of the torus (Section \ref{subsec:Invariant Hierarchy}) due to alternating current. We thus implement invariant hierarchies in Section~\ref{subsubsec:Moment/Sum-of-Squares Hierarchy NUM}.
\subsection{Numerical Results} 
\label{subsec:Numerical Results} 

We consider large test cases representing portions of European power systems: Great Britain~(GB)~\cite{gbnetwork}, Poland~(PL)~\cite{murillosanchez-thomas-zimmerman-2011}, and systems from the PEGASE project~\cite{josz-2016,pegase}. They were preprocessed (see Table~\ref{tab:size}) to remove low-impedance lines in order to improve the solver's numerical convergence, which is a typical procedure in power system analysis. A \mbox{$1\times 10^{-3}$}~per~unit low-impedance line threshold was used for all test cases except for PEGASE-1354 and PEGASE-2869 which use a \mbox{$3\times 10^{-3}$}~per~unit threshold. Table~\ref{tab:size} includes the at-least-locally-optimal objective values obtained from the interior point solver in M{\sc atpower}~\cite{murillosanchez-thomas-zimmerman-2011} for the problems after preprocessing. Note that the PEGASE systems specify generation costs that minimize active power losses, so the objective values in both columns are the same. Implementations use YALMIP \mbox{2015.06.26}~\cite{yalmip}, Mosek 7.1.0.28, and MATLAB 2013a on a computer with a quad-core 2.70~GHz processor and 16~GB of RAM. The results do not include the typically small formulation times.


\begin{table}[ht]
\centering
\caption{Size of Data (After Low-Impedance Line Preprocessing)}
\begin{tabular}{|l|c|c|c|c|}
\hline
\multicolumn{1}{|c|}{\textbf{Test}}  & \multicolumn{1}{c|}{\textbf{Number of}} & \multicolumn{1}{c|}{\textbf{Number of}} & \multicolumn{2}{c|}{M{\sc atpower} \textbf{Solution~\cite{murillosanchez-thomas-zimmerman-2011}}}\\ \cline{4-5} 
\multicolumn{1}{|c|}{\textbf{Case}}  & \multicolumn{1}{c|}{\textbf{Complex}} & \multicolumn{1}{c|}{\textbf{Edges}} & \multicolumn{1}{c|}{\textbf{Gen. Cost}} & \multicolumn{1}{c|}{\textbf{Loss Min.}}\\
\multicolumn{1}{|c|}{\textbf{Name}}  & \multicolumn{1}{c|}{\textbf{Variables}} & \multicolumn{1}{c|}{\textbf{in Graph}} & \multicolumn{1}{c|}{\textbf{(\$/hr)}} & \multicolumn{1}{c|}{\textbf{(MW)}}\\
\hline
GB-2224            & 2,053 & \hphantom{1}2,581 & 1,942,260  & \hphantom{1}60,614\\
PL-2383wp        & 2,177 & \hphantom{1}2,651 &   1,868,350 & \hphantom{1}24,991\\
PL-2736sp         & 2,182 & \hphantom{1}2,675 &  1,307,859 & \hphantom{1}18,336\\
PL-2737sop       & 2,183 & \hphantom{1}2,675 &    \hphantom{1,}777,617 & \hphantom{1}11,397\\
PL-2746wop      & 2,189 & \hphantom{1}2,708 &    1,208,257 & \hphantom{1}19,212\\
PL-2746wp        & 2,192 & \hphantom{1}2,686 &   1,631,737 & \hphantom{1}25,269\\
PL-3012wp        & 2,292 & \hphantom{1}2,805 &   2,592,462 & \hphantom{1}27,646\\
PL-3120sp         & 2,314 & \hphantom{1}2,835 &  2,142,720 & \hphantom{1}21,513\\
PEGASE-89      & \hphantom{1,1}70 & \hphantom{11,}185 & \hphantom{1,11}5,819 & \hphantom{11}5,819\\
PEGASE-1354  & \hphantom{1,}983 & \hphantom{1}1,526 &          \hphantom{1,1}74,043 & \hphantom{1}74,043\\
PEGASE-2869  & 2,120 & \hphantom{1}3,487 &         \hphantom{1,}133,945 & 133,945\\
PEGASE-9241  & 7,154 & 12,292 &                    \hphantom{1,}315,749 & 315,749\\
PEGASE-9241R\tablefootnote{\label{pegase9241R}PEGASE-9241 contains negative resistances to account for generators at lower voltage levels. In PEGASE-9241R these are set to 0.} 
		        & 7,154 & 12,292 & \hphantom{1,}315,785 & 315,785\\
\hline
\end{tabular}
\label{tab:size}
\end{table}

\subsubsection{Shor Relaxation} 
\label{subsubsec:Shor Relaxation NUM} 

Table~\ref{tab:SDP} shows the results of applying \sdpr{} and \sdpc{}. They yield global decision variables and the global objective value for the cases marked an asterisk (*) in Table~\ref{tab:SDP}. For those cases, the eigenvector associated to the largest eigenvalue is feasible up to 0.005 p.u. at voltage constraints and 1~MVA at all other constraints, and the objective evaluated in the eigenvector matches the bound within 0.05\% relative to the bound. The lower bounds in Table~\ref{tab:SDP} suggest that the corresponding M{\sc atpower} solutions in Table~\ref{tab:size} are at least very close to being globally optimal. The gap between the M{\sc atpower} solutions and the lower bounds from \sdpc{} for the generation cost minimizing problems are less than 0.72\% for GB-2224, 0.29\% for the Polish systems, and 0.02\% for the PEGASE systems with the exception of PEGASE-9241. The non-physical negative resistances in \mbox{PEGASE-9241} result in weaker lower bounds, yielding a gap of 1.64\%.
\begin{table}[ht]
\centering
\caption{Real and Complex SDP (Generation Cost Minimization)}
\begin{tabular}{|l|c|c|c|c|}
\hline
\multicolumn{1}{|c|}{\textbf{Case}} & \multicolumn{2}{c|}{\textbf{SDP}-$\mathbb{R}$} & \multicolumn{2}{c|}{\textbf{SDP}-$\mathbb{C}$} \\
\cline{2-5}
\multicolumn{1}{|c|}{\textbf{Name}} & \multicolumn{1}{c|}{Val. (\$/hr)} & \multicolumn{1}{c|}{\!\! Time (sec)\!\!} & \multicolumn{1}{c|}{Val. (\$/hr)} & \multicolumn{1}{c|}{\!\!Time (sec)\!\!} \\
\hline
GB-2224       & 1,928,194 &  \hphantom{1}10.9 & 1,928,444 &  \hphantom{11}6.2 \\
PL-2383wp     & 1,862,979 &  \hphantom{1}48.1 & 1,862,985 &  \hphantom{1}23.0 \\
PL-2736sp*    & 1,307,749 &  \hphantom{1}35.7 & 1,307,764 &  \hphantom{1}22.0 \\
PL-2737sop*   &   \hphantom{1,}777,505 &  \hphantom{1}41.7 &   \hphantom{1,}777,539 &  \hphantom{1}19.5 \\
PL-2746wop*   & 1,208,168 &  \hphantom{1}51.1 & 1,208,182 &  \hphantom{1}22.8 \\
PL-2746wp     & 1,631,589 &  \hphantom{1}43.8 & 1,631,655 &  \hphantom{1}20.0 \\
PL-3012wp     & 2,588,249 &  \hphantom{1}52.8 & 2,588,259 &  \hphantom{1}24.3 \\
PL-3120sp     & 2,140,568 &  \hphantom{1}64.4 & 2,140,605 &  \hphantom{1}25.5 \\
PEGASE-89*    &     \hphantom{1,11}5,819 &   \hphantom{11}1.5 &     \hphantom{1,11}5,819 &   \hphantom{11}0.9 \\
PEGASE-1354   &    \hphantom{1,1}74,035 &  \hphantom{1}11.2 &    \hphantom{1,1}74,035 &   \hphantom{11}5.6 \\
PEGASE-2869   &   \hphantom{1,}133,936 &  \hphantom{1}38.2 &   \hphantom{1,}133,936 &  \hphantom{1}20.6 \\
PEGASE-9241   &   \hphantom{1,}310,658 & 369.7 &   \hphantom{1,}310,662 & 136.1 \\
PEGASE-9241R &    \hphantom{1,}315,848 & 317.2 &   \hphantom{1,}315,731 &  \hphantom{1}95.9 \\
\hline
\end{tabular}
\label{tab:SDP}
\end{table}
In accordance with Appendices~\ref{app:Invariance of Shor Relaxation Bound} and~\ref{app:Invariance of SDP-R Relaxation Bound}, all objective values in Table~\ref{tab:SDP} match within $0.037\%$. \sdpc{} is faster (between a factor of 1.60 and 3.31) than \sdpr{}. Exploiting the isomorphic structure of complex matrices in \sdpc{} is thus better than eliminating a row and column in \sdpr{}.

\subsubsection{Second-Order Conic Relaxation}
\label{subsubsec:Second-Order Conic Relaxation NUM}

Table~\ref{tab:SOCP} shows the results of applying \socpr{} and \socpc{}. Unlike the Shor relaxation, they do not yield the global solution to any of the test cases.\footnote{\socpc{} generally does not provide a global solution with the exception of radial systems when certain non-trivial technical conditions are satisfied~\cite{low_tutorial}.} \socpc{} provides better lower bounds and is faster than \socpr{}. Lower bounds from \socpc{} are between $0.87\%$ and $3.96\%$ larger and solver times are faster by between a factor of $1.24$ and $6.76$ than those from \socpr{}. 

\begin{table}[ht]
\centering
\caption{Real and Complex SOCP (Generation Cost Minimization)}
\begin{tabular}{|l|c|c|c|c|}
\hline
\multicolumn{1}{|c|}{\textbf{Case}} & \multicolumn{2}{c|}{\textbf{SOCP}-$\mathbb{R}$} & \multicolumn{2}{c|}{\textbf{SOCP}-$\mathbb{C}$} \\
\cline{2-5}
\multicolumn{1}{|c|}{\textbf{Name}} & \multicolumn{1}{c|}{Val. (\$/hr)} & \multicolumn{1}{c|}{\!\! Time (sec)\!\!} & \multicolumn{1}{c|}{Val. (\$/hr)} & \multicolumn{1}{c|}{\!\!Time (sec)\!\!} \\
\hline
GB-2224      & 1,855,393 &  \hphantom{1}3.5 & 1,925,723 &  \hphantom{1}1.4 \\
PL-2383wp    & 1,776,726 &  \hphantom{1}8.5 & 1,849,906 &  \hphantom{1}2.4 \\
PL-2736sp    & 1,278,926 &  \hphantom{1}4.8 & 1,303,958 &  \hphantom{1}1.7 \\
PL-2737sop   &  \hphantom{1,}765,184 &  \hphantom{1}5.5 &   \hphantom{1,}775,672 &  \hphantom{1}1.6 \\
PL-2746wop   & 1,180,352 &  \hphantom{1}5.1 & 1,203,821 &  \hphantom{1}1.7 \\
PL-2746wp    & 1,586,226 &  \hphantom{1}5.5 & 1,626,418 &  \hphantom{1}1.7 \\
PL-3012wp    & 2,499,097 &  \hphantom{1}5.9 & 2,571,422 &  \hphantom{1}2.0 \\
PL-3120sp    & 2,080,418 &  \hphantom{1}6.2 & 2,131,258 &  \hphantom{1}2.2 \\
PEGASE-89    &     \hphantom{1,11}5,744 &  \hphantom{1}0.5 &     \hphantom{1,11}5,810 &  \hphantom{1}0.4 \\
PEGASE-1354  &   \hphantom{1,1}73,102 &  \hphantom{1}3.4 &    \hphantom{1,1}73,999 &  \hphantom{1}1.5 \\
PEGASE-2869  &   \hphantom{1,}132,520 &  \hphantom{1}9.0 &   \hphantom{1,}133,869 &  \hphantom{1}2.7 \\
PEGASE-9241  &   \hphantom{1,}306,050 & 35.3 &   \hphantom{1,}309,309 & 10.0 \\
PEGASE-9241R &   \hphantom{1,}312,682 & 36.7 &   \hphantom{1,}315,411 &  \hphantom{1}5.4 \\
\hline
\end{tabular}
\label{tab:SOCP}
\end{table}


\subsubsection{Moment/Sum-of-Squares Hierarchy}
\label{subsubsec:Moment/Sum-of-Squares Hierarchy NUM}

The real hierarchy globally solves a broad class of optimal power flow problems~\cite{pscc2014,cedric_tps,mh_sparse_msdp,ibm_paper} by first converting them to real numbers. The dense real and complex hierarchies solve problems up to 10 buses while the sparse ones solve problems with up 40 buses. In order to solve large-scale instances, we apply the mismatch hierarchy of Section~\ref{subsec:Example of Multi-Ordered Hierarchy: the Mismatch Hierarchy} with the following parameters: $\epsilon := 1$ MVA; $h := 2$; and $\Delta^\text{max}_\text{min} := 2$. See Appendix~\ref{app:five-bus example} for a small example. The mismatches are taken to be the modulus of the complex number whose real part is the mismatch for constraint $k$ in \eqref{eq:opfP} and whose imaginary part is the mismatch for constraint $k$ in \eqref{eq:opfQ}. In other words, apparent power mismatches are considered rather than active and reactive power seperately. To improve numerics, $|y_{\alpha,\beta}+\overline{y_{\alpha,\beta}}| \leqslant 2(v^{\text{max}})^{\alpha+\beta}$ and $|y_{\alpha,\beta}-\overline{y_{\alpha,\beta}}| \leqslant 2(v^{\text{max}})^{\alpha+\beta}$ are added to the complex hierarchy and $|y_{\alpha}| \leqslant (v^{\text{max}})^{\alpha}$ is added to the real hierarchy for all $|\alpha|,|\beta|\leqslant \max H(d)$ where $v^{\text{max}} := (v_1^{\text{max}},\hdots,v_n^{\text{max}})$ (see~\eqref{eq:opf2}). A similar procedure can be found in~\cite{waki-2006}.

In Tables~\ref{tab:MSOSR} and~\ref{tab:MSOSC}, the mismatch hierarchy is applied until the solution obtained is feasible up to 0.005 p.u. at voltage constraints and 1 MVA at all other constraints\footnote{Typical violations are smaller than 1~MVA. For instance, with the complex hierarchy PL-3012wp has over 99\% of the buses with less than 0.02~MVA violation, and only 0.09\% of the buses with greater than 0.1~MVA violation. Maximum line flow viotation is 0.0006~MVA.}, and until the objective evaluated in the solution matches the bound within 0.05\% relative to the bound.
The optimal values in the two tables match to at least 0.007\%, which is within the expected solver tolerance. Further, they match the optimal values for the loss minimizing problems in Table~\ref{tab:size} to within 0.013\%, further proving that they are globally optimal. However, local solvers do not always globally solve the optimal power flow~\cite{bukhsh2013,mh_sparse_msdp,ferc5,iscas2015}.
Though both hierarchies solve many small- and medium-size test cases which minimize generation cost, the mismatch hierarchy requires too many higher-order constraints for larger generation-cost-minimizing test cases.

The feasible set of the optimal power flow problem is included in the ball of radius $ \sum_{k\in \mathcal{B}} \left(v_k^{\max}\right)^2$ so a slack variable and a sphere constraint may be added as suggested in Section \ref{subsec:Convergence of the Complex Hierarchy}. In order to preserve sparsity, a slack variable and a sphere constraint may be added \emph{for each maximal clique} of the chordal extension of the network graph. However, it tends to introduce numerical convergence challenges in problems with several thousand buses, resulting in higher-order constraints at more buses and correspondingly longer solver times. Interestingly, the results in Table~\ref{tab:MSOSC} were obtained without the slack variables and sphere constraints. A potential way to account for this would be to compute the Hermitian complexity~\cite{putinar-2012} of the ideal generated by the polynomials associated with equality constraints. A step in that direction would be to assess the greatest number of distinct points (possibly infinite) $v^i \in \mathbb{C}^n, 1 \leqslant i \leqslant p,$ such that $(v^i)^H (H_k + \textbf{i}\tilde{H_k}) v^j = -p_k^\text{dem} - \textbf{i}q_k^\text{dem}$ for all buses $k$ not connected to a generator and for all $1\leqslant i,j \leqslant p$. The Hermitian complexity of the ideal generated by $\sum_{i=1}^n |z_i|^2 + \sigma(z)+a$ as defined in \eqref{eq:weak} with $a<0$ is equal to 1.



\begin{table}[ht]
\centering
\caption{Real Moment/Sum-of-Squares Hierarchy (Active Power Loss Minimization)}
\begin{tabular}{|l|c|c|c|c|}
\hline
\multicolumn{1}{|c|}{\textbf{Case}} & \textbf{Num.} & \multicolumn{1}{c|}{\textbf{Global Obj.}} & \textbf{Max. Viol.} & \multicolumn{1}{c|}{\textbf{Solver}} \\
\multicolumn{1}{|c|}{\textbf{Name}} & \textbf{Iter.} & \multicolumn{1}{c|}{\textbf{Val. (MW)}} & \textbf{(MVA)} & \multicolumn{1}{c|}{\textbf{Time (sec)}} \\
\hline
PL-2383wp    & 3  & \hphantom{1}24,990 & 0.25 & \hphantom{1,}583.4 \\
PL-2736sp    & 1  & \hphantom{1}18,334 & 0.39 & \hphantom{1,1}44.0 \\
PL-2737sop   & 1  & \hphantom{1}11,397 & 0.45 & \hphantom{1,1}52.4 \\
PL-2746wop   & 2  & \hphantom{1}19,210 & 0.28 & 2,662.4 \\
PL-2746wp    & 1  & \hphantom{1}25,267 & 0.40 & \hphantom{1,1}45.9 \\
PL-3012wp    & 5  & \hphantom{1}27,642 & 1.00 & \hphantom{1,}318.7 \\
PL-3120sp    & 7  & \hphantom{1}21,512 & 0.77 & \hphantom{1,}386.6 \\
PEGASE-1354  & 5  & \hphantom{1}74,043 & 0.85 & \hphantom{1,}406.9 \\ 
PEGASE-2869  & 6 & 133,944 & 0.63 & \hphantom{1,}921.3 \\ 
\hline
\end{tabular}
\label{tab:MSOSR}
\end{table}

\begin{table}[ht]
\centering
\caption{Complex Moment/Sum-of-Squares Hierarchy (Active Power Loss Minimization)}
\begin{tabular}{|l|c|c|c|c|}
\hline
\multicolumn{1}{|c|}{\textbf{Case}} & \textbf{Num.} & \multicolumn{1}{c|}{\textbf{Global Obj.}} & \textbf{Max. Viol.} & \multicolumn{1}{c|}{\textbf{Solver}} \\
\multicolumn{1}{|c|}{\textbf{Name}} & \textbf{Iter.} & \multicolumn{1}{c|}{\textbf{Val. (MW)}} & \textbf{(MVA)} & \multicolumn{1}{c|}{\textbf{Time (sec)}} \\
\hline
PL-2383wp    & 3  & \hphantom{1}24,991 & 0.10 & \hphantom{1,1}53.9 \\
PL-2736sp    & 1  & \hphantom{1}18,335 & 0.11 & \hphantom{1,1}17.8 \\
PL-2737sop   & 1  & \hphantom{1}11,397 & 0.07 & \hphantom{1,1}25.7 \\
PL-2746wop   & 2  & \hphantom{1}19,212 & 0.12 & \hphantom{1,}124.3 \\
PL-2746wp    & 1  & \hphantom{1}25,269 & 0.05 & \hphantom{1,1}18.5 \\
PL-3012wp    & 7  & \hphantom{1}27,644 & 0.91 & \hphantom{1,}141.0 \\ 
PL-3120sp    & 9  & \hphantom{1}21,512 & 0.27 & \hphantom{1,}193.9 \\
PEGASE-1354  & 11  & \hphantom{1}74,042 & 1.00 & 1,132.6 \\ 
PEGASE-2869  & 9 & 133,939 & 0.97 & \hphantom{1,}700.8 \\ 
\hline
\end{tabular}
\label{tab:MSOSC}
\end{table}

Tables~\ref{tab:MSOSR} and~\ref{tab:MSOSC} show that the complex hierarchy has advantages over the real hierarchy. In all cases except PEGASE-1354, there is a speedup factor in solver time of between 1.31 and 21.42. The most significant improvements are seen for cases (e.g., \mbox{PL-2383wp} and \mbox{PL-2746wop} whose biggest maximal clique has 19 nodes) where the higher-order constraints account for a large portion of the solver times. This is due to fewer terms in the higher-order constraints. There is also a speedup in solver time of between 2.0 and 5.9 for 7 out of the 8 small- to moderate-size generation-cost-minimizing test cases in~\cite{mh_sparse_msdp}, the exception being case39Q due to numerical difficulties. For those 7 cases, the maximum violation for the complex hierarchy is 0.08~MVA, with the remaining case (case118Q) having a maximum violation of 0.32~MVA.


\mbox{PL-3012wp}, \mbox{PL-3120sp}, \mbox{PEGASE-1354}, and \mbox{PEGASE-2869} require more iterations in the complex case than the real one. However, the improved speed per iteration results in faster overall solution times for all of these test cases except for \mbox{PEGASE-1354}, for which 6 additional iterations result in a factor of 2.78 slower solver time. Interestingly, the dense versions of the real and complex hierarchies yield the same bounds at each order for small test cases ($\leqslant$ 10 buses) from~\cite{hicss2014,bukhsh2013,borden-demarco-lesieutre-molzahn-2011,iscas2015}.



\section{Conclusion}
\label{sec:Conclusion}
We construct a complex moment/sum-of-squares hierarchy for complex polynomial optimization and prove convergence toward the global optimum. Theoretical and experimental evidence suggest that relaxing non-convex constraints before converting from complex to real numbers is better than doing the operations in the opposite order. 
We conclude with the question: is it possible to gain efficiency by transposing convex optimization algorithms from real to complex numbers?

\section*{Acknowledgements}
We wish to thank the anonymous reviewers for their precious time and feedback.
Many thanks to Mihai Putinar for the fruitful discussions that helped us to improve this paper. We also wish to thank Didier Henrion, Jean Bernard Lasserre, and Markus Schweighofer for their insightful comments.


\appendix

\section{Rank-2 Condition}
\label{app:Rank-2 Condition}
It is proven here that a Hermitian matrix $Z$ is positive semidefinite and has rank~1 if and only if $\Lambda(Z)$ is positive semidefinite and has rank~2.

$(\Longrightarrow)$ Say $Z = z z^H$ where real and imaginary parts are defined by $z = x_1 + \textbf{i} x_2$ and $(x_1,x_2) \neq (0,0)$. Then 
\begin{subequations}
\label{eq:comp1}
\begin{align}
\Lambda(Z) & = 
\left(
\begin{array}{cc}
x_1 x_1^T + x_2 x_2^T & x_1 x_2^T - x_2 x_1^T \\
x_2 x_1^T - x_1 x_2^T & x_1 x_1^T + x_2 x_2^T
\end{array}
\right) \\ \label{eq:comp2}
& = 
\left(
\begin{array}{c}
x_1 \\
x_2 
\end{array}
\right)
\left(
\begin{array}{c}
x_1 \\
x_2 
\end{array}
\right)^T
+
\left(
\begin{array}{r}
-x_2 \\
x_1 
\end{array}
\right)
\left(
\begin{array}{r}
-x_2 \\
x_1 
\end{array}
\right)^T. 
\end{align}
\end{subequations}
The rank of $\Lambda(Z)$ is equal to 2 since $( ~ x_1^T ~ x_2^T ~)^T$ and $(~ (-x_2)^T ~ x_1^T ~ )^T$ are non-zero orthogonal vectors.

$(\Longleftarrow)$ Say $\Lambda(Z) = x x^T + y y^T$ where $x$ and $y$ are non-zero real vectors. Consider the block structure $x = ( ~ x_1^T ~ x_2^T ~ )^T$ and $y = ( ~ y_1^T ~ y_2^T ~ )^T$. For $i = 1, \hdots, n$, it must be that
\begin{subequations}
\begin{gather}
x_{1i}^2 + y_{1i}^2 = x_{2i}^2 + y_{2i}^2, \label{eq:sym} \\
x_{1i} x_{2i} + y_{1i} y_{2i} = 0. \label{eq:asym}
\end{gather}
\end{subequations}
Two cases can occur. The first is that $x_{1i} x_{2i} \neq 0$ in which case there exists a real number $\lambda_i \neq 0$ such that 
\begin{equation}
\left\{
\begin{array}{rcr}
y_{1i} & = & - \lambda_i ~ x_{2i}, \\
y_{2i} & = & \frac{1}{\lambda_i} ~ x_{1i}.
\end{array}
\right.
\end{equation}
Equation \eqref{eq:sym} implies that $(1-\lambda_i^2) x_{1i}^2 = (1-\frac{1}{\lambda_i^2}) x_{2i}^2$ thus $(1-\lambda_i^2)(1-\frac{1}{\lambda_i^2}) \geqslant 0$ and $\lambda_i = \pm 1$. The second case is that $x_{1i} x_{2i} = 0$. Then, according to \eqref{eq:asym}, $y_{1i} y_{2i} = 0$. If either $x_{1i} = y_{1i} = 0$ or $x_{2i} = y_{2i} = 0$, then \eqref{eq:sym} implies that $x_{1i} = x_{2i} = y_{1i} = y_{2i} = 0$. If $x_{1i} = y_{2i} = 0$, then \eqref{eq:sym} implies that $y_{1i} = \pm x_{2i}$. If $x_{2i} = y_{1i} = 0$, then \eqref{eq:sym} implies that $y_{2i} = \pm x_{1i}$.

In any case, there exists $\epsilon_i = \pm 1$ such that
\begin{equation}
\left\{
\begin{array}{rcr}
y_{1i} & = & - \epsilon_i ~ x_{2i}, \\
y_{2i} & = & \epsilon_i ~ x_{1i}.
\end{array}
\right.
\end{equation}
For $i,j=1,\hdots, n$ it must be that 
\begin{subequations}
\begin{gather}
(1-\epsilon_i \epsilon_j)(x_{1i} x_{1j}-x_{2i} x_{2j} ) = 0, \label{eq:sym1} \\
(1-\epsilon_i \epsilon_j)(x_{1j}x_{2i} + x_{1i} x_{2j}) = 0. \label{eq:asym1}
\end{gather}
\end{subequations}
Moreover 
\begin{equation}
\left\{
\begin{array}{rcr}
x_{1i} x_{1j} + y_{1i} y_{1j} & = & x_{1i} x_{1j} + \epsilon_i \epsilon_j x_{2i} x_{2j}, \\
x_{1i} x_{2j} + y_{1i} y_{2j} & = & x_{1i} x_{2j} -\epsilon_i \epsilon_j x_{2i} x_{1j}.
\end{array}
\right.
\end{equation} 
It will now be shown that
\begin{equation}
\left\{
\begin{array}{rcr}
x_{1i} x_{1j} + y_{1i} y_{1j} & = & x_{1i} x_{1j} + x_{2i} x_{2j}, \\
x_{1i} x_{2j} + y_{1i} y_{2j} & = & x_{1i} x_{2j} - x_{2i} x_{1j}.
\end{array}
\right.
\label{eq:conclusion}
\end{equation}
It is obvious if $\epsilon_i \epsilon_j = 1$. If $\epsilon_i \epsilon_j = -1$, then \eqref{eq:sym1}--\eqref{eq:asym1} imply
\begin{subequations}
\begin{gather}
x_{1i} x_{1j}-x_{2i} x_{2j} = 0, \label{eq:sym2}\\
x_{1j}x_{2i} + x_{1i} x_{2j} = 0. \label{eq:asym2}
\end{gather}
\end{subequations}
If $x_{1i} x_{1j} x_{2i} x_{2j} = 0$, it can be seen that \eqref{eq:conclusion} holds. If not, \eqref{eq:sym2} implies that there exists a real number $\mu_{ij} \neq 0$ such that 
\begin{equation}
\left\{
\begin{array}{rcr}
x_{2i} & = & \mu_{ij} ~ x_{1i}, \\
x_{2j} & = & \frac{1}{\mu_{ij}} ~ x_{1j}.
\end{array}
\right.
\end{equation}
Further, \eqref{eq:asym2} implies that $(\mu_{ij} + \frac{1}{\mu_{ij}}) x_{1j} x_{2i} = 0$. This is impossible ($\mu_{ij} + \frac{1}{\mu_{ij}} \neq 0$ and $x_{1j} x_{2i} \neq 0$). Thus,~\eqref{eq:conclusion} holds.

With the left hand side corresponding to~$\Lambda(Z) = x x^T + y y^T$ and the right hand side corresponding to~\eqref{eq:comp2}, equation \eqref{eq:conclusion} implies that $\Lambda(Z)$ is equal to \eqref{eq:comp2}. Since the function $\Lambda$ is injective, it must be that $Z = (x_1+\textbf{i}x_2)(x_1+\textbf{i}x_2)^H$.

\section{Invariance of Shor Relaxation Bound}
\label{app:Invariance of Shor Relaxation Bound}
We have val(\csdpr{}) $\geqslant$ val(\sdpr{}) since the feasible set is more tightly constrained due to $\eqref{eq:csdpR4}$. To prove the opposite inequality, define $\tilde{\Lambda}(X) := (A + C)/2 + \textbf{i}(B-B^T)/2$ for all $X \in \mathbb{S}_{2n}$
using the block decomposition in the left hand part of \eqref{eq:csdpR4}.
It is proven here that if $X$ is a feasible point of \sdpr{}, then $\Lambda \circ \tilde{\Lambda}(X)$ is a feasible point of \csdpr{} with same objective value as $X$. Firstly, $\Lambda \circ \tilde{\Lambda}(X)$ satisfies \eqref{eq:csdpR4} because $\tilde{\Lambda}(X)$ is a Hermitian matrix.
Secondly, in order to show that $\Lambda \circ \tilde{\Lambda}(X)$ satisfies \eqref{eq:csdpR3}, notice that if $x = ( ~ x_1^T ~ x_2^T ~ )^T$ then
\begin{equation}\small
\begin{array}{c}
\left(
\begin{array}{r}
x_1 \\
x_2
\end{array}
\right)^T
\left(
\begin{array}{lc}
\hphantom{-}C & -B \\
-B^T & \hphantom{-}A
\end{array}
\right)
\left(
\begin{array}{r}
x_1 \\
x_2
\end{array}
\right) 
\\
= 
\\
\left(
\begin{array}{r}
-x_2 \\
x_1
\end{array}
\right)^T
\left(
\begin{array}{cl}
A & B^T \\
B & C
\end{array}
\right)
\left(
\begin{array}{r}
-x_2 \\
x_1
\end{array}
\right). 
\end{array}
\end{equation}
Hence $\Lambda \circ \tilde{\Lambda}(X)$ is equal to the sum of two positive semidefinite matrices.
Finally, to prove that $\Lambda \circ \tilde{\Lambda}(X)$ satisfies \eqref{eq:csdpR2} and has same objective value as $X$, notice that if $H \in \mathbb{H}_n$ and $Y\in \mathbb{S}_{2n}$, then $\text{Tr} \left[ \Lambda(H) Y \right] = \sum_{1 \leqslant i,j \leqslant 2n} \Lambda(H)_{ij} Y_{ji} = \sum_{1 \leqslant i,j \leqslant 2n} \Lambda(H)_{ij} Y_{ij} =  \sum_{1 \leqslant i,j \leqslant n} \text{Re}(H)_{ij} A_{ij} + \text{Im}(H)_{ij} B_{ij} + (-\text{Im}(H)_{ij}) (B^T)_{ij} + \text{Re}(H)_{ij} C_{ij} = \sum_{1 \leqslant i,j \leqslant n} \text{Re}(H_{ij}) (A + C)_{ij} + \text{Im}(H_{ij})(B - B^T)_{ij} = \hdots$ 
\\
$ 2 \sum_{1 \leqslant i,j \leqslant n} \text{Re} [ H_{ij} (\tilde{\Lambda}(Y)_{ij})^H ] = 2 \sum_{1 \leqslant i,j \leqslant n} H_{ij} (\tilde{\Lambda}(Y)_{ij})^H = 2\text{Tr} [ H \tilde{\Lambda}(Y) ]$. Completing the proof, for all $H \in \mathbb{H}_n$, $\text{Tr} [ \Lambda(H) ~ \Lambda \circ \tilde{\Lambda}(X) ] = 2 \text{Tr} [ H \tilde{\Lambda}(X) ] = \text{Tr} \left[ \Lambda(H) X \right]$. 

\section{Invariance of SDP-$\mathbb{R}$ Relaxation Bound}
\label{app:Invariance of SDP-R Relaxation Bound}
We assume that $X$ is a feasible point of SDP-$\mathbb{R}$ and construct a feasible point of SDP-$\mathbb{R}$ with same objective value and first diagonal entry equal to 0. Consider the eigenvalue decomposition $X = \sum_{k=1}^p x_k x_k^T$ for some $x_k \in \mathbb{R}^{2n}$ and $p\in \mathbb{N}$. For all $\theta \in \mathbb{R}$, define
\begin{equation}
R_\theta := \Lambda [ \cos (\theta) I_n + \textbf{i} \sin (\theta) I_n] =
\left(
\begin{array}{cr}
\cos (\theta) I_n & -\sin (\theta) I_n \\
\sin (\theta) I_n & \cos (\theta) I_n
\end{array}
\right).
\end{equation}
For $k=1,\hdots,p$, define $\theta_k \in \mathbb{R}$ such that $x_{k,n+1} + \textbf{i} x_{k,1} =: \sqrt{x_{k, n+1}^2+ x_{k1}^2} e^{\textbf{i}\theta_k}$. Construct $\tilde{X} := \sum_{k=1}^p (R_{\theta_k} x_k) (R_{\theta_k} x_k)^T \succcurlyeq 0$ whose first diagonal entry is equal to 0. If $H \in \mathbb{H}_n$, $\text{Tr}(\Lambda(H)\tilde{X}) = \sum_{k=1}^p \text{Tr}[ \Lambda(H) R_{\theta_k} x_k x_k^T R_{\theta_k}^T ] = \sum_{k=1}^p \text{Tr}[ R_{\theta_k}^T \Lambda(H) R_{\theta_k} x_k x_k^T  ] = \sum_{k=1}^p \text{Tr}[ \Lambda\{(\cos (\theta_k) I_n - \textbf{i} \sin (\theta_k) I_n) H (\cos (\theta_k) I_n + \textbf{i} \sin (\theta_k) I_n)\} x_k x_k^T ] = \hdots$
\\
$\sum_{k=1}^p \text{Tr}[ \Lambda( H ) x_k x_k^T  ] = \text{Tr}(\Lambda(H)X)$.

\section{Discrepancy Between Second-Order Conic Relaxation Bounds}
\label{app:Discrepancy Between Second-Order Conic Relaxation Bounds}
We have val(\csocpr{}) $\geqslant$ val(\socpr{}) since the feasible set is more tightly constrained.
The opposite inequality between optimal values does not hold, and this can be proven by considering the example QCQP-$\mathbb{C}$ defined by $\inf_{z_1,z_2 \in \mathbb{C}} ~ (1+\textbf{i}) \bar{z}_1 z_2 + (1-\textbf{i}) \bar{z}_2 z_1 ~ \mathrm{s.t.} ~ \bar{z}_1 z_1 \leqslant 1, ~ \overline{z}_2 z_2 \leqslant 1$. \csocpr{} yields the globally optimal value of $-2\sqrt{2}$, while \socpr{} yields $-4$. 

\section{Five-Bus Illustrative Example for Exploiting Sparsity}
\label{app:five-bus example}
Consider the five-bus optimal power flow problem in~\cite{bukhsh2013} which is an instance of QCQP-$\mathbb{C}$. Let $\text{ind}(\cdot)$ denote the set of indices corresponding to monomials of either the objective $f$ or constraint functions $(g_{i})_{1\leqslant i \leqslant 20}$. We have
\begin{align*}
\text{ind}(f) =\; & \{ (1,1),(1,2),(1,3),(3,5),(4,5),(5,5) \}, \\
\text{ind}(g_{1}) = \text{ind}(g_{2}) =\; & \{ (1,1),(1,2),(1,3)\} & \left[P_1^{\min}, Q_1^{\min}\right], \\
\text{ind}(g_{3}) = \text{ind}(g_{4}) =\; & \{ (1,2),(2,2),(2,3),(2,4) \} & \left[P_2, Q_2\right],\\
\text{ind}(g_{5}) = \text{ind}(g_{6}) =\; & \{ (1,3),(2,3),(3,3),(3,5) \} & \left[P_3,Q_3\right],\\
\text{ind}(g_{7}) = \text{ind}(g_{8}) =\; & \{ (2,4),(4,4),(4,5) \} & \left[P_4,Q_4\right], \\ \addtocounter{equation}{1}\tag{\theequation}\label{eq:5bus}
\text{ind}(g_{9}) = \text{ind}(g_{10}) =\; & \{ (3,5),(4,5),(5,5) \} & \left[P_5^{\min},Q_5^{\min}\right],\\
\text{ind}(g_{11}) = \text{ind}(g_{12}) =\; & \{ (1,1) \} & \left[V_1^{\min},V_1^{\max}\right], \\
\text{ind}(g_{13}) = \text{ind}(g_{14}) =\; & \{ (2,2) \} & \left[V_2^{\min},V_2^{\max}\right], \\
\text{ind}(g_{15}) = \text{ind}(g_{16}) =\; & \{ (3,3) \} & \left[V_3^{\min},V_3^{\max}\right], \\
\text{ind}(g_{17}) = \text{ind}(g_{18}) =\; & \{ (4,4) \} & \left[V_4^{\min},V_4^{\max}\right], \\ 
\text{ind}(g_{19}) = \text{ind}(g_{20}) =\; & \{ (5,5) \} & \left[V_5^{\min},V_5^{\max}\right],
\end{align*}
\noindent where the text in brackets indicates the origin of the constraint: $P_i$ and $Q_i$ for active and reactive power injection equality constraints, $P_i^{\min}$ and $Q_i^{\min}$ for lower limits on active and reactive power injections, and $V_i^{\min}$ and $V_i^{\max}$ for squared voltage magnitude limits at bus~$i$.
For brevity, the sphere constraints discussed in Section~\ref{subsec:Convergence of the Complex Hierarchy} are not enforced in this example. Regardless, the complex hierarchy with $d_i = 1,\; \forall i \in \left\lbrace 1,2,3,4,5,6,11,12,13,14,15,16 \right\rbrace$, $d_i = 2,\; \forall i \in \left\lbrace 7,8,9,10,17,18,19,20 \right\rbrace$, yields the global solution. Second-order constraints are identified using the mismatch hierarchy.

The graph $\left(\{1,\hdots,5\},\mathcal{E}^\text{mono}\right)$ corresponding to~\eqref{eq:5bus} is shown in Fig.~\ref{fig:5bus graph} where each node $i$ corresponds to a complex variable $z_i$. Edges $\mathcal{E}^\text{mono}$, which are denoted by solid lines in Fig.~\ref{fig:5bus graph}, connect variables that appear in the same monomial in any of the constraint equations or objective function. The supergraph $\left(\{1,\hdots,5\},\mathcal{E}^\text{con}\right)$ has edges $\mathcal{E}^\text{con}$ comprised of $\mathcal{E}^\text{mono}$ (solid lines in Fig.~\ref{fig:5bus graph}) augmented with edges connecting all variables within each constraint with $d_i > 1$ (dashed lines in Fig.~\ref{fig:5bus graph}). In this case, the supergraph is already chordal, so there is no need to form a chordal extension $\mathcal{G}^{\text{ch}}$. 
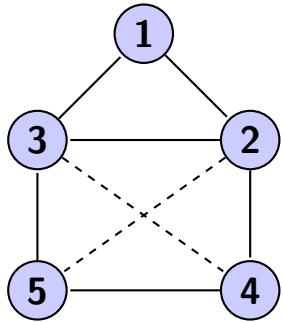
\begin{figure}[ht]
\centering
\begin{tikzpicture}[-,>=stealth',shorten >=1pt,auto,node distance=2cm,
  thick,main node/.style={circle,fill=blue!20,draw,font=\sffamily\Large\bfseries}]

  \node[main node] (1) {1};
  \node[main node] (3) [below left of=1] {3};
  \node[main node] (5) [below of=3] {5};
  \node[main node] (2) [below right of=1] {2};
  \node[main node] (4) [below of=2] {4};

  \draw (1) edge (2);
  \draw (2) edge (4);
  \draw (4) edge (5);
  \draw (5) edge (3);
  \draw (3) edge (1);
  \draw (2) edge (3);
  \draw[dashed] (2) edge (5);
  \draw[dashed] (3) edge (4);
\end{tikzpicture}
\caption{Graph Corresponding to Equations~\eqref{eq:5bus} from Five-Bus System in~\cite{bukhsh2013}}
\label{fig:5bus graph}
\end{figure}
The maximal cliques of the supergraph are $C_1 = \left\lbrace 1, 2, 3\right\rbrace$ and $C_2 = \left\lbrace 2, 3, 4, 5\right\rbrace$. Clique $C_2$ is the minimal covering clique for all second-order constraints $ g_i\left(z\right)\geqslant 0,\, \forall i \in \left\lbrace 7,8,9,10,17,18,19,20\right\rbrace$. The order associated with $C_2$ is two ($d^{\text{cl}}_2 = 2$) since the highest order $d_i$ among all constraints for which $C_2$ is the minimal covering clique is two. Clique $C_1$ is not the minimal covering clique for any constraints with $d_i > 1$, so $d^{\text{cl}}_1 = 1$. The globally optimal objective value obtained from the complex hierarchy specified above is 946.8 with corresponding decision variable $z = (1.0467 + 0.0000\mathbf{i}, 0.9550 - 0.0578\mathbf{i}, 0.9485 - 0.0533\mathbf{i}, 0.7791 + 0.6011\mathbf{i}, 0.7362 + 0.7487\mathbf{i})^T$.

\bibliography{mybib}{}
\bibliographystyle{siam}

\end{document}